\newcommand{\inner}[1]{\left\langle #1 \right\rangle}
\newcommand{\norm}[1]{\left\Vert #1\right\Vert}
\newcommand{\bb}[1]{\mathbb{#1}}
\newcommand{\conv}[0]{\mathrm{conv}}
\newcommand{\cl}[0]{\mathrm{cl}\,} 
\newcommand{\ca}[1]{\mathcal{#1}}
\newcommand{\M}[0]{\mathcal{M}}
\newcommand{\mk}{{m_{k} }}
\newcommand{\mkp}{{m_{k+1} }}
\newcommand{\xk}{{x_{k} }}
\newcommand{\yk}{{y_{k} }}
\newcommand{\zk}{{z_{k} }}
\newcommand{\xkp}{{x_{k+1} }}
\newcommand{\zkp}{{z_{k+1} }}
\newcommand{\D}{\ca{D}}
\newcommand{\grad}{{\mathit{grad}\,}}
\newcommand{\Rn}{\mathbb{R}^n}
\newcommand{\Rp}{\mathbb{R}^p}
\newcommand{\Rm}{\mathbb{R}^m}
\newtheorem{theo}{Theorem}[section]
\newtheorem{lem}[theo]{Lemma}
\newtheorem{prop}[theo]{Proposition}
\newtheorem{defin}[theo]{Definition}
\newtheorem{rmk}[theo]{Remark}
\newtheorem{assumpt}[theo]{Assumption}
\numberwithin{equation}{section}
\begin{document}

\title{Learning-rate-free Momentum SGD with Reshuffling Converges in Nonsmooth Nonconvex Optimization\thanks{The research of Xiaoyin Hu is supported by Zhejiang Provincial Natural Science Foundation of China under Grant (No. LQ23A010002), the National Natural Science Foundation of China (Grant No. 12301408), Scientific Research Foundation of Hangzhou City University(No.J-202317), and the advanced computing resources provided by the Supercomputing Center of HZCU. The work of Xin Liu is supported in part by the National Key R\&D Program of China (2023YFA1009300), the National Natural Science Foundation of China (12125108, 12226008, 11991021, 11991020, 12021001, 12288201), Key Research Program of Frontier Sciences, Chinese Academy of Sciences (ZDBS-LY-7022), and CAS AMSS-PolyU Joint Laboratory of Applied Mathematics.}
}


\author{Xiaoyin Hu  		\and
        Nachuan Xiao        \and
        Xin Liu             \and
        Kim-Chuan Toh		
}


\institute{Xiaoyin Hu \at
              School of Computer and Computing Science, Hangzhou City University, Hangzhou, 310015, China.\\
              \email{hxy@amss.ac.cn} 
          \and
          Nachuan Xiao \at
              Institute of Operational Research and Analytics, National University of Singapore, Singapore. \\
              \email{xnc@lsec.cc.ac.cn}           
           \and
          Xin Liu \at
              State Key Laboratory of Scientific and Engineering Computing, Academy of Mathematics and Systems Science, Chinese Academy of Sciences, and University of Chinese Academy of Sciences, China.\\
              \email{liuxin@lsec.cc.ac.cn}
          \and 
          Kim-Chuan Toh \at
              Department of Mathematics, and Institute of Operations Research and Analytics, National University of Singapore, Singapore 119076.\\
              \email{mattohkc@nus.edu.sg} 
}

\date{Received: date / Accepted: date}

\maketitle

\begin{abstract}
In this paper, we propose a generalized framework for developing learning-rate-free momentum stochastic gradient descent (SGD) methods in the minimization of nonsmooth nonconvex functions, especially in training nonsmooth neural networks. Our framework adaptively generates learning rates based on the historical data of stochastic subgradients and iterates. Under mild conditions, we prove that our proposed framework enjoys global convergence to the stationary points of the objective function in the sense of the conservative field, hence providing convergence guarantees for training nonsmooth neural networks. Based on our proposed framework, we propose a novel learning-rate-free momentum SGD method (LFM). Preliminary numerical experiments reveal that LFM performs comparably to the state-of-the-art learning-rate-free methods (which have not been shown theoretically to be convergence) across well-known neural network training benchmarks.
\keywords{Nonsmooth optimization \and Stochastic subgradient methods \and Nonconvex optimization \and  Learning-rate free \and Differential inclusion}
\subclass{65K05 \and 90C30}
\end{abstract}

\section{Introduction}

In this paper, we consider the following unconstrained optimization problem,
\begin{equation}
    \tag{UOP}
    \label{Prob_Ori}
    \min_{x \in \Rn}\quad f(x) := \frac{1}{N}\sum_{i = 1}^N f_i(x),
\end{equation}
where  $f_i: \Rn \to \bb{R}$ is locally Lipschitz continuous, nonconvex, and possibly nonsmooth for each $i \in [N] := \{1, 2,\ldots, N\}$. Optimization problems that take the form of \eqref{Prob_Ori} have wide applications in various areas of deep learning, such as computer vision \cite{wei2021end,jin2022roby,wang2023chromosome}, natural language processing \cite{fu2021end}, and reinforcement learning \cite{chen2023symbolic}, urban computing\cite{zhou2022unsupervised, zhou2023tempo}.

Stochastic optimization methods play a fundamental role in deep learning, exhibiting efficient performance in solving large-scale optimization problems. The selection of suitable learning rates (i.e., stepsizes) is a critical aspect of these methods, particularly in practical scenarios with limited prior knowledge about the underlying optimization problems. Towards that challenge, several recent works have developed a variety of adaptive methods, including ADAM \cite{kingma2014adam}, AdaGrad \cite{duchi2011adaptive}, and RMSProp \cite{hinton2012neural}, which dynamically employ suitable preconditioners for the update directions based on the noise level of each coordinate. However, these adaptive methods still necessitate the tuning of their learning rates, and their numerical performance highly depends on the choices of the learning rates. To the best of our knowledge, how to efficiently choose appropriate learning rates for these methods remains an open question. Additionally, the expressions of the theoretically optimal learning rates in these existing works rely on some unknown prior knowledge of \eqref{Prob_Ori}, including the distance between the initial point and the optimal solution set, the Lipschitz constant of the objective function, and the initial optimality gap on the function value.

In light of the challenges in choosing appropriate learning rates,  {\it learning-rate-free} methods are proposed to automatically adjust learning rates for stochastic optimization methods with almost no prior knowledge of the optimization problems \cite{streeter2012no}.  Most of the existing learning-rate-free optimization approaches are specifically designed for online stochastic convex optimization, and can achieve near-optimal convergence rates under appropriate conditions \cite{luo2015achieving,orabona2016coin,cutkosky2018black,mhammedi2020lipschitz,bhaskara2020online,jacobsen2022parameter,zhang2022pde}. Moreover, within the framework of the stochastic gradient descent (SGD), numerous studies \cite{orabona2014simultaneous,orabona2017training,kempka2019adaptive,chen2022better,carmon2022making,defazio2023learning,ivgi2023dog,mishchenko2023prodigy} develop practical learning-rate-free SGD methods for convex optimization. In addition, given that learning rates are the only parameters in SGD, all these aforementioned learning-rate-free methods are usually referred to as \textit{parameter-free} methods. As there are always other parameters involved in these methods (e.g., the parameters for momentum terms and variance estimators in \cite{defazio2023learning,mishchenko2023prodigy}), the word ``learning-rate-free'' might be more appropriate than ``parameter-free''.  
These methods dynamically adjust their learning rates by utilizing the historical stochastic gradients across iterations. In particular, \cite{ivgi2023dog} propose a practical learning-rate-free method named {\it Distance over Gradients} (DoG). DoG leverages the update schemes from SGD, and automatically chooses its learning rates based on the distance from the initial point and norms of gradients, as described by the following rule,
\begin{equation}
\tag{DoG}
    \xkp = \xk - \eta_k g_k, \qquad \text{where} \quad \eta_k = \rho \cdot \frac{\max_{i\leq k}\norm{x_i - x_0}}{\sqrt{\varepsilon_0 + \sum_{i = 0}^k \norm{g_i}^2}}. 
\end{equation}
Here $g_k$ refers to the stochastic subgradient of the objective function $f$ at $\xk$. As demonstrated in \cite{ivgi2023dog}, DoG achieves a near-optimal convergence rate for convex optimization problems and exhibits close performance to the SGD that is equipped with a well-tuned learning rate.  Moreover, \cite{ivgi2023dog} shows that setting the hyperparameter $\rho = 1$ is consistently effective across a broad range of training tasks, particularly in the training of large language and vision models.

For most of the existing learning-rate-free SGD \cite{orabona2014simultaneous,orabona2017training,kempka2019adaptive,chen2022better,carmon2022making,defazio2023learning,ivgi2023dog,mishchenko2023prodigy,kreisler2024accelerated,khaled2024tuning}, despite the fact that empirical studies reveal their great potentials in training neural networks, their theoretical convergence guarantees are only established for convex objective functions or nonconvex differentiable functions. As demonstrated in \cite{bolte2021conservative}, a wide range of modern neural network architectures employ nonsmooth activation functions such as ReLU and leaky ReLU, hence yielding objective functions that lack Clarke regularity \cite{clarke1990optimization} (e.g., differentiability, or weak convexity). To our best knowledge, the theoretical results presented in \cite{orabona2014simultaneous,orabona2017training,kempka2019adaptive,chen2022better,carmon2022making,defazio2023learning,ivgi2023dog,mishchenko2023prodigy} cannot be extended to explain their convergence for training nonsmooth neural networks. 

To develop learning-rate-free stochastic subgradient methods with theoretical guarantees in training nonsmooth neural networks, we employ the recently proposed concept of  {\it conservative field} \cite{bolte2021conservative}. This concept generalizes the Clarke subdifferential and captures the behavior of automatic differentiation algorithms when applied to nonsmooth neural networks. Those functions associated with a conservative field are termed {\it path-differentiable} functions. As discussed in \cite{davis2020stochastic,bolte2021conservative,castera2021inertial}, the class of path-differentiable functions is general enough to enclose those loss functions in a wide range of deep learning tasks.  

Based on the concept of conservative field, some recent works establish the convergence properties for stochastic subgradient methods in minimizing nonsmooth nonconvex path-differentiable functions based on Ordinary Differential Equation (ODE) approaches \cite{clarke1990optimization,benaim2005stochastic,borkar2009stochastic,davis2020stochastic}. These stochastic subgradient methods include SGD \cite{davis2020stochastic,bolte2021conservative,josz2023global}, inertial Newton method (INNA) \cite{castera2021inertial}, momentum SGD \cite{ruszczynski2020convergence,le2023nonsmooth}, Adam-family methods \cite{xiao2024adam}, etc.  In particular, \cite{pauwels2021incremental} proposes a variant of AdaGrad-Norm \cite{ward2020adagrad}, and establishes convergence properties of the proposed method in minimizing nonsmooth path-differentiable functions. In addition, the proof techniques proposed by \cite{pauwels2021incremental} allow for a direct extension to establish the convergence properties for the AdaGrad-Norm \cite{ward2020adagrad} method in the minimization of path-differentiable functions.  However, all the aforementioned stochastic subgradient methods that developed from the ODE approach \cite{davis2020stochastic,ruszczynski2020convergence,bolte2021conservative,castera2021inertial,pauwels2021incremental,hu2022constraint,josz2023global,le2023nonsmooth,xiao2024adam,xiao2023convergence} explicitly involves the learning rate in their update schemes, hence are not learning-rate-free.

As demonstrated in Table \ref{Table_existing_works}, existing learning-rate-free methods are mainly developed for convex optimization problems, leaving a gap in the theoretical understanding of their convergence when applied to neural network training tasks. On the other hand, existing stochastic subgradient methods, developed from ODE approaches for convergence guarantees in nonsmooth nonconvex optimization are not learning-rate-free. In addition, existing works on developing momentum SGD methods for minimizing path-differentiable functions \cite{ruszczynski2020convergence,le2023nonsmooth} require that the momentum parameters converge to $1$. However, practical applications of momentum SGD often employ a constant momentum parameter throughout the iterations. Although \cite{josz2023lyapunov} established the convergence of momentum SGD with constant momentum parameter, their results require the Clarke regularity \cite[Definition 2.3.4]{clarke1990optimization} of the objective function. To the best of our knowledge, the convergence properties of momentum SGD with fixed momentum parameter $\beta$ in minimizing path-differentiable functions remain unexplored.

\begin{table}[t]
    \small
	\centering
	\begin{tabular}{c|cccc}\hline
		& Assumptions on $f$ & Mo. term & Fixed mo. param. & Tuning-free l.r. \\
		\hline
		LFM (Ours) & Path-differentiable & Y & Y &  Y \\\hline
		DoG \cite{ivgi2023dog} & Convex & N  & N &  Y \\
		DoWG \cite{khaled2023dowg} & Convex & N & N & Y \\
		D-adapted GD \cite[Alg. 2]{defazio2023learning} & Convex & N & N & Y \\  
            D-adapted SGD \cite[Alg. 4]{defazio2023learning} & No convergence result & Y & Y & Y \\ 
            U-DOG    \cite[Alg. 1]{kreisler2024accelerated} & Convex & Y & N & Y \\  \hline
		SGD \cite{davis2020stochastic} & Path-differentiable & N & N & N\\
		AdaGrad-Norm \cite{pauwels2021incremental} & Path-differentiable & N & N & N\\
		Momentum SGD \cite{ruszczynski2020convergence,le2023nonsmooth} & Path-differentiable & Y & N & N \\
            INNA \cite{castera2021inertial} & Path-differentiable & Y & N & N \\
		ADAM \cite{xiao2024adam} & Path-differentiable & Y & N & N \\\hline
        Momentum SGD \cite{josz2023lyapunov} & Clarke regular & Y & Y & N \\\hline
	\end{tabular}
\caption{A Comparison of our proposed LFM with existing works on practical learning-rate-free SGD and stochastic subgradient methods. Here ``alg.'', ``mo.'', ``param.'' and ``l.r.'' are the abbreviations for ``algorithm'', ``momentum'', ``parameter'', and ``learning rate'', respectively.}
\label{Table_existing_works}
\end{table}

\paragraph{Motivations}

Motivated by the learning-rate-free SGD schemes developed in DoG method \cite{ivgi2023dog}, we consider a general framework for developing learning-rate-free momentum SGD in the minimization of nonsmooth nonconvex path-differentiable functions,
\begin{equation}
\tag{LFSGD}
\label{Eq_Framework}
    \left\{
    \begin{aligned}
        & \text{Compute $g_k$ as a generalized subgradient of $f_{i_k}$ at $\xk$}, \\
        &\eta_k = \frac{\mu_k}{\sqrt{\varepsilon_0 + \sum_{i = 1}^{k} \tau_i\norm{m_i}^2}},\\
        &\mkp = \beta \mk + (1-\beta) g_k,\\
        &\xkp = \xk - \eta_k \mkp. 
    \end{aligned}
    \right.
\end{equation}
Here the sequence $\{i_k\}$ is chosen from $[N]$ by reshuffling, in the sense that  $\{i_k: jN \leq k< jN+N\} = [N]$ holds for any $j\geq 0$ (i.e., every index in $[N]$ is sampled once in each epoch). Moreover, $\beta \in [0,1)$ is the momentum parameter. Furthermore, we employ the sequences $\{\mu_k\}$ and $\{\tau_k\}$ to characterize how the learning rates are automatically constructed within the framework \eqref{Eq_Framework}. In addition, $\varepsilon_0>0$ is a prefixed constant that enforces the numerical stabilization of the framework \eqref{Eq_Framework}.

When we choose $\mu_k = \max_{i\leq k}\norm{x_i - x_0}$, $\tau_k = 1$ and $\beta = 0$, the framework \eqref{Eq_Framework}  yields DoG \cite{ivgi2023dog}. Alternatively, by choosing $\mu_k = \tau_k = (\max_{i\leq k}\norm{x_i - x_0})^2$ and $\beta = 0$, we recover DoWG developed by \cite{khaled2023dowg}. The flexibility of the framework \eqref{Eq_Framework} enables us to explain the convergence of some existing learning-rate-free methods in training nonsmooth neural networks.

Furthermore, different from existing learning-rate-free methods that are developed from SGD  \cite{ivgi2023dog,khaled2023dowg,defazio2023learning}, our proposed framework \eqref{Eq_Framework} employ the (heavy-ball) momentum term \cite{polyak1964some} for acceleration. In addition, the momentum parameter $\beta$ in \eqref{Eq_Framework} remains constant throughout the iterations, which is identical to the default settings employed in widely-used deep learning frameworks, such as PyTorch and TensorFlow.  To our best knowledge,  there has been no prior research that investigates the convergence properties of \eqref{Eq_Framework} in minimizing nonsmooth path-differentiable functions.

\paragraph{Contributions}

In this paper, we propose a general framework \eqref{Eq_Framework} for developing learning-rate-free SGD with convergence guarantees in nonsmooth nonconvex optimization. In this framework, we draw samples by reshuffling and adaptively compute learning rates $\{\eta_k\}$ by the historical information of the iterates. We prove the convergence properties of the framework \eqref{Eq_Framework}, in the sense that under mild conditions, any cluster point of the sequence $\{\xk\}$ is a stationary point of $f$ and the sequence $\{f(\xk)\}$ converges. Based on these theoretical results, we provide convergence guarantees for DoG and DoWG in training nonsmooth neural networks with random shuffling.  Furthermore, we show that when $f$ is coercive,  under mild assumptions, the sequence $\{\xk\}$ generated by the framework \eqref{Eq_Framework} is uniformly bounded.

We show our proposed framework \eqref{Eq_Framework} covers several efficient learning-rate-free methods, including DoG \cite{ivgi2023dog} and DoWG \cite{khaled2023dowg}. Moreover,  based on the framework \eqref{Eq_Framework}, we propose a novel learning-rate-free momentum-accelerated SGD (LFM) in Algorithm \ref{Alg:PFSGD} for solving \eqref{Prob_Ori}.  To the best of our knowledge, LFM is the first learning-rate-free SGD method that enjoys solid convergence guarantees for training nonsmooth neural networks. We conduct numerical experiments in image classification tasks to evaluate the performance of LFM. The results show that LFM outperforms DoG and Dadap-SGD, and achieves both comparable performance and enhanced robustness as DoWG. These results further demonstrate the promising capabilities of our proposed framework \eqref{Eq_Framework}.

\paragraph{Organization}
The rest of this paper is organized as follows. Section 2 outlines the notation employed throughout and presents the necessary preliminary on nonsmooth analysis and differential inclusion. In Section 3, we analyze the convergence properties for the framework \eqref{Eq_Framework}, and investigate the uniform boundedness of the sequence $\{\xk\}$ under deterministic settings. In Section 4, we develop a learning-rate-free SGD (LFM) in Algorithm \ref{Alg:PFSGD}. In Section 5, we present the results of our numerical experiments that investigate the performance of LFM in training nonsmooth neural networks. Finally, we conclude the paper in the last section.

\section{Preliminaries}
\subsection{Basic notations}
We denote $\inner{\cdot, \cdot}$ as the standard inner product of $\Rn$ and $\norm{\cdot}$ as the $\ell_2$-norm of a vector or an operator, while $\norm{\cdot}_p$ refers to the $\ell_p$-norm of a vector or an operator.  $\bb{B}_{\delta}(x):= \{ \tilde{x} \in \Rn: \norm{\tilde{x} - x}^2 \leq \delta^2 \}$ denotes the ball centered at $x$ with radius $\delta$. For a given set $\ca{Y}$, $\mathrm{dist}(x, \ca{Y})$ denotes the distance between $x$ and  $\ca{Y}$, i.e. $\mathrm{dist}(x, \ca{Y}) := \mathop{\arg\min}_{y \in \ca{Y}} ~\norm{x-y}$,  $\cl \ca{Y}$ denotes the closure of $\ca{Y}$ and $\conv \ca{Y}$ denotes the convex hull of $\ca{Y}$. For any $x \in \Rn$, we denote $\inner{x, \ca{Y}} := \{\inner{x, y}: y \in \ca{Y}\}$.

\subsection{Nonsmooth analysis and conservative field}

In this subsection, we introduce some basic concepts related to nonsmooth analysis, especially to the concept of conservative field. 

\begin{defin}[\cite{clarke1990optimization}]
	\label{Defin_Subdifferential}

  For any given locally Lipschitz continuous function $f: \Rn \to \bb{R}$, the Clarke subdifferential of $f$, denoted by $\partial f$, is defined as 
    \begin{equation*}
        \partial f(x) := \conv\left( \left\{ \lim_{k\to +\infty} \nabla f(\zk): \text{$f$ is differentiable at $\{\zk\}$, and } \lim_{k\to +\infty} \zk = x   \right\}  \right).
    \end{equation*}
\end{defin}

A set-valued mapping $\D: \Rn \rightrightarrows \Rp$ is a mapping that maps  $\Rn$ into the collection of subsets of $\Rp$. For any given set-valued mapping, its graph is given by
\begin{equation}
    \mathrm{graph}(\D) := \{(x, d) \in \Rn \times \Rp: x \in \Rn, d \in \D(x)\}. 
\end{equation}
Moreover, the set-valued mapping $\D$ is said to have closed graph (also called graph-closed) if  $\mathrm{graph}(\D)$ is a closed subset of $\Rn \times \Rp$. In addition, $\D$ is said to be convex-valued, if $\D(x)$ is a convex subset of $\Rp$ for any $x \in \Rn$.  Furthermore, $\D$ is said to be locally bounded if for any compact subset $K$ of $\Rn$, it holds that $\sup_{x \in K, ~d \in \D(x)} \norm{d} <+\infty$.

In the following definitions, we present the definition of conservative field and its corresponding potential function. 
\begin{defin}
	An absolutely continuous curve is an absolutely continuous mapping $\gamma: \bb{R}_+ \to \Rn $ whose derivative $\gamma'$ exists almost everywhere in $\bb{R}$ and $\gamma(t) - \gamma(0)$ equals to the Lebesgue integral of $\gamma'$ between $0$ and $t$ for all $t \in \bb{R}_+$, i.e.,
	\begin{equation*}
		\gamma(t) = \gamma(0) + \int_{0}^t \gamma'(u) \mathrm{d} u, \qquad \text{for all $t \in \bb{R}_+$}.
	\end{equation*}
\end{defin}

\begin{defin}
    \label{Defin_conservative_field}
    Let $f: \Rn \to \bb{R}$ be a locally Lipschitz continuous function, and $\D: \Rn \to \Rn$ be a graph-closed locally bounded set-valued mapping. Then $f$ is a potential function for $\D$ if for any $x \in \Rn$ and any absolutely continuous curve $\gamma$ with $\gamma(0) = 0$ and $\gamma(1) = x$, it holds that 
    \begin{equation}
        f(x) = f(0) + \int_{0}^1 \sup_{l_s \in \D(\gamma(s))} \inner{l_s, \dot{\gamma}(s)} \mathrm{d}s. 
    \end{equation}
    We call the set-valued mapping $\D$ as a conservative field for $f$, or simply a conservative field. Moreover, a function $f$ is called path-differentiable if it is a potential function for some conservative fields.  
\end{defin}

\subsection{Characterizing path-differentiability}

In this subsection, we introduce the results from \cite{davis2020stochastic,bolte2021conservative} on how to characterize the class of potential functions. In particular, we introduce the concept of definable sets and functions. As discussed in \cite{davis2020stochastic,bolte2021conservative}, any definable function is a potential function. More importantly, the class of definable functions is general enough to cover a wide range of nonsmooth loss functions in deep learning.

\subsubsection{Basic introduction to $o$-minimal structures}
As illustrated in \cite{pauwels2023conservative}, sets or functions are called definable if they can be characterized by a finite number of basic formulas, inequalities, or Boolean operations involving standard functions such as polynomial, exponential, or coordinate-wise maximum. For a more comprehensive introduction of definable functions and sets, interested readers could refer to \cite{bolte2021conservative, davis2020stochastic} for detailed explanations and illustrations.

The concept of definability is closely related to the concept of the $o$-minimal structure. 
\begin{defin}
An $o$-minimal structure on $(\bb{R}, +, \cdot)$ is a countable collection of sets $\ca{O} = \{\ca{O}_p: p \in \bb{N}\}$, where each $\ca{O}_p$ is itself a family of subsets of $\Rp$, such that 
\begin{enumerate}
    \item $\ca{O}_p$ contains the empty set and is stable by finite union, finite intersection, and complementation.
    \item If $A$ belongs to $\ca{O}_p$, then both $A \times \bb{R}$ and $\bb{R} \times A$ belong to $\ca{O}_{p+1}$.
    \item If $\pi:\Rp \times \bb{R} \to \Rp$ denotes the coordinate projection onto $\Rp$, then for any $A$ in $\ca{O}_{p+1}$, $\pi(A)$ belongs to $\ca{O}_p$.
    \item  $\ca{O}_p$ contains all the sets of the form $\{x \in \Rp: \nu(x) = 0\}$,  where $\nu$ is any polynomial on $\Rp$. 
    \item The elements in $\ca{O}_1$ are exactly the finite unions of intervals and points.
\end{enumerate}
Moreover, a mapping $F: \Rn \to \Rm$ is called definable in $\ca{O}$ if its graph is definable in $\ca{O}$ as a subset of $\Rn \times \Rm$, i.e.,  ${\rm graph}(F) \in \ca{O}_p$ for some $p$.

\end{defin}

The terminology "tameness" refers to definability in an $o$-minimal structure without specifying its structure. In many cases, the $o$-minimal structure is predetermined, allowing us to use the term "definable" instead of "tame" throughout this paper. The following proposition demonstrates that any definable function can serve as a potential function for a specific conservative field. 
\begin{prop}[Theorem 5.8 in \cite{davis2020stochastic}]
    Suppose $f: \Rn \to \bb{R}$ is a locally Lipschitz continuous definable function. Then $f$ is a potential function that admits $\partial f$ as its conservative field.  
\end{prop}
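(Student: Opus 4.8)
The plan is to verify the two clauses of Definition~\ref{Defin_conservative_field} for $\D = \partial f$: first, that $\partial f$ is graph-closed, locally bounded, and convex-valued; second, that $f$ satisfies the path-integral identity. The first clause is the routine part. For a locally Lipschitz $f$, each $\partial f(x)$ is a nonempty compact convex set, the mapping $\partial f$ is upper semicontinuous with such values (hence graph-closed), and local boundedness is inherited from the local Lipschitz modulus of $f$ on compact sets. So the substance of the proof lies in the integral identity.

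For the integral identity, I would fix an absolutely continuous curve $\gamma$ with $\gamma(0)=0$ and $\gamma(1)=x$. Since $f$ is locally Lipschitz and $\gamma$ has compact image, the composition $f\circ\gamma$ is absolutely continuous on $[0,1]$; it is therefore differentiable almost everywhere and, by the fundamental theorem of calculus, $f(x)-f(0) = \int_0^1 (f\circ\gamma)'(s)\,\mathrm{d}s$. Hence it suffices to prove the pointwise chain rule: for almost every $s\in[0,1]$,
\[ (f\circ\gamma)'(s) = \inner{v, \dot\gamma(s)} \quad \text{for every } v\in\partial f(\gamma(s)). \]
Once this holds, the supremum over $v\in\partial f(\gamma(s))$ equals $(f\circ\gamma)'(s)$ for a.e.\ $s$, and integrating recovers exactly the potential identity of Definition~\ref{Defin_conservative_field}.

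To establish the chain rule I would invoke the $C^1$ stratification of definable functions underlying \cite{davis2020stochastic,bolte2021conservative}. Because $f$ is definable and locally Lipschitz, $\Rn$ admits a finite definable partition into $C^1$ strata $M_1,\dots,M_r$ on each of which $f|_{M_i}$ is $C^1$, and on which the projection formula holds: the orthogonal projection of $\partial f(y)$ onto $T_y M_i$ is the single vector $\nabla(f|_{M_i})(y)$ for every $y\in M_i$. It then remains to control the curve. Setting $T_i := \gamma^{-1}(M_i)$, the sets $T_i$ form a finite measurable partition of $[0,1]$, so almost every $s$ is both a Lebesgue density point of its own $T_i$ and a differentiability point of $\gamma$. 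At such an $s$ with $\gamma(s)\in M_i$, picking $s_n\to s$ within $T_i$ gives $\gamma(s_n),\gamma(s)\in M_i$, and since $M_i$ is $C^1$ the unit secants $(\gamma(s_n)-\gamma(s))/\norm{\gamma(s_n)-\gamma(s)}$ accumulate only in $T_{\gamma(s)}M_i$; hence $\dot\gamma(s)\in T_{\gamma(s)}M_i$. The same sequence, combined with the $C^1$ smoothness of $f|_{M_i}$, yields $(f\circ\gamma)'(s) = \inner{\nabla(f|_{M_i})(\gamma(s)),\dot\gamma(s)}$. Finally, for any $v\in\partial f(\gamma(s))$, tangency of $\dot\gamma(s)$ lets me replace $v$ by its tangential projection, so by the projection formula $\inner{v,\dot\gamma(s)} = \inner{\nabla(f|_{M_i})(\gamma(s)),\dot\gamma(s)} = (f\circ\gamma)'(s)$, which is precisely the desired chain rule.

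I expect the main obstacle to be the stratification/projection-formula input rather than the measure-theoretic tangency argument. Proving that a definable locally Lipschitz function admits a $C^1$ stratification on which the Clarke subdifferential projects onto the stratum gradient is a genuinely deep fact of o-minimal geometry, whereas absolute continuity of $f\circ\gamma$, the density-point tangency lemma, and the final integration are comparatively routine. In the write-up I would therefore import the stratification and projection formula as a cited structural theorem and concentrate the remaining effort on the curve-level chain rule and the concluding integration.
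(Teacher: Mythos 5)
The paper offers no proof of this proposition --- it is imported verbatim as Theorem 5.8 of \cite{davis2020stochastic} --- and your sketch reconstructs exactly the argument of that cited source: the projection formula on a definable $C^1$ stratification (the deep o-minimal input, which you correctly isolate as a cited structural theorem) combined with the density-point tangency argument and the a.e.\ chain rule along absolutely continuous curves, followed by integration. The outline is correct; the only detail left implicit is the degenerate case where $\dot\gamma(s)=0$ or $\gamma(s_n)=\gamma(s)$ along the chosen sequence, where the secant normalization is unavailable but local Lipschitzness gives $(f\circ\gamma)'(s)=0=\inner{v,\dot\gamma(s)}$ directly.
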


Proposition \ref{Prop_Definable_operations} illustrates that definable sets exhibit the same robustness properties and attractive analytic features as semi-algebraic sets. 
\begin{prop}[Proposition 1 in \cite{pauwels2023conservative}]
    \label{Prop_Definable_operations}
    \begin{enumerate}
        \item The closure, interior, and boundary of a definable set are definable.
        \item Images and inverse images of definable sets under definable maps are definable.
        \item Compositions of definable maps are definable.
    \end{enumerate}
\end{prop}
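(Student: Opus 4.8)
The plan is to reduce all three items to a single meta-principle: the collection $\ca{O}$ is closed under the logical operations of first-order formulas over the ordered field $(\bb{R}, +, \cdot, <)$, meaning that any set cut out of already-definable sets by finitely many Boolean connectives and quantifiers is again definable. The axioms in the preceding definition supply exactly these operations: Boolean combinations come from Axiom 1, existential quantification over a coordinate is the projection of Axiom 3, and universal quantification is recovered through $\forall = \lnot\,\exists\,\lnot$ by composing complementation with projection. Granting this principle, each item is proved simply by exhibiting the appropriate defining formula.

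First I would establish a preparatory fact that makes the projection axiom usable: inserting dummy variables in arbitrary positions (cylindrification) and permuting coordinates preserve definability. Cylindrification follows from Axiom 2, and permutation reduces to transpositions, each realized by intersecting with the definable diagonal $\{(s,t): s - t = 0\}$ (Axiom 4) and re-projecting. This is what lets me project out \emph{any} coordinate rather than only the last one. With this in hand, items 2 and 3 follow at once. For a definable $F\colon \Rn \to \Rm$ and definable $A \subseteq \Rn$, $B \subseteq \Rm$, writing $\pi$ and $\pi'$ for the projections onto the $\Rm$- and $\Rn$-blocks respectively,
\begin{equation*}
F(A) = \pi\bigl( (A \times \Rm) \cap \mathrm{graph}(F) \bigr), \qquad F^{-1}(B) = \pi'\bigl( \mathrm{graph}(F) \cap (\Rn \times B) \bigr),
\end{equation*}
and both right-hand sides lie in $\ca{O}$ by Axioms 1--3 and the preparatory fact. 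Likewise, for definable $G\colon \Rm \to \bb{R}^k$, the graph of $G \circ F$ is the projection of $(\mathrm{graph}(F) \times \bb{R}^k) \cap (\Rn \times \mathrm{graph}(G))$ that eliminates the intermediate $\Rm$-variables, hence definable.

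For item 1 I would first enlarge the stock of atomic predicates to include polynomial strict inequalities. The polynomial map $\nu$ has graph $\{(u,t): t - \nu(u) = 0\} \in \ca{O}$ by Axiom 4, so by the inverse-image construction just established together with $(-\infty,0) \in \ca{O}_1$ (Axiom 5), the set $\{u: \nu(u) < 0\} = \nu^{-1}((-\infty,0))$ is definable. The closure and interior of a definable $A \subseteq \Rp$ are then given by the first-order formulas
\begin{equation*}
\cl A = \{x: \forall \varepsilon > 0,\ \exists y \in A,\ \norm{x - y}^2 < \varepsilon^2\},
\end{equation*}
and
\begin{equation*}
\inte A = \{x: \exists \varepsilon > 0,\ \forall y \in \Rp,\ \norm{x - y}^2 < \varepsilon^2 \Rightarrow y \in A\},
\end{equation*}
in which $\norm{x-y}^2 < \varepsilon^2$ is a polynomial strict inequality and $\varepsilon$ ranges over the definable set $(0,+\infty)$; applying the meta-principle shows both are definable, and the boundary $\cl A \setminus \inte A$ is definable by Axiom 1.

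I expect the main obstacle to be the preparatory permutation-and-cylindrification step rather than the formulas themselves. Because Axiom 3 removes only the last coordinate, every existential quantifier over an interior variable --- which appears in all of the formulas above --- forces me to first move that variable into the last slot while keeping the set inside $\ca{O}_p$. Making this precise (transpositions generate the symmetric group, each transposition is implemented by a definable diagonal and a re-projection, and dummy axes are inserted wherever the counting of coordinates requires) is the one genuinely technical point; everything after it is a routine transcription of sets into first-order formulas followed by a termwise appeal to the axioms.
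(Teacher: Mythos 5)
The paper does not actually prove this proposition: it is imported verbatim as Proposition~1 of the cited reference \cite{pauwels2023conservative}, so there is no in-paper argument to compare against. Your proposal is a correct, self-contained derivation from the five axioms of the $o$-minimal structure, and it follows the standard route in the $o$-minimality literature: first show that $\ca{O}$ is closed under first-order definability (Boolean operations from Axiom~1, existential quantification from the projection Axiom~3, universal quantification via $\lnot\exists\lnot$), then read off each item from an explicit formula. Your identification of the one genuinely technical point is also accurate --- Axiom~3 only deletes the \emph{last} coordinate, so one must first justify cylindrification and coordinate permutation; your reduction to transpositions implemented by the polynomial diagonal $\{(s,t): s-t=0\}$ (Axiom~4) followed by re-projection is the standard fix, with the minor bookkeeping caveat that one should arrange the variables to be eliminated to sit in the trailing positions before invoking Axiom~3 repeatedly. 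The ordering of your argument is sound (images/inverse images and compositions first, so that $\nu^{-1}((-\infty,0))$ makes polynomial strict inequalities available as atomic predicates via Axiom~5, and only then the topological formulas for $\cl A$ and $\inte A$). In short: where the paper cites, you prove, and the proof is the canonical one; nothing in it would fail.
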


It is worth mentioning that the class of definable functions is general enough to encompass a wide range of objective functions prevalent in practical machine-learning scenarios.  As illustrated in the Tarski-Seidenberg theorem \cite{bierstone1988semianalytic}, any semi-algebraic function is definable. Moreover, \cite{wilkie1996model} demonstrates the existence of an $o$-minimal structure that contains both the graph of the exponential function and all semi-algebraic sets. Consequently, numerous commonly employed activation and loss functions, such as sigmoid, softplus, ReLU, leaky ReLU, $\ell_1$-loss, MSE loss, hinge loss, logistic loss, and cross-entropy loss, fall within the class of definable functions. Furthermore, Proposition \ref{Prop_Definable_operations} illustrates that definability is preserved under finite summation and composition.  Consequently, any neural network built from definable building blocks has a definable loss function. More importantly, the outputs of the AD algorithms can be described by a definable conservative field. This fact leads to the following proposition, which exhibits the nonsmooth Morse-Sard property for \eqref{Prob_Ori}. 
    \begin{prop}
        \label{Prop_definable_regularity}
        Let $f$ be a potential function that admits $\D_f$ as its conservative field. Suppose $f$ and $\D_f$ are definable over $\Rn$, then the set $\{f(x):   0 \in \conv(\D_f(x) )\}$ is a finite subset of $\bb{R}$. 
    \end{prop}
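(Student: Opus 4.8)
The plan is to establish the nonsmooth Morse–Sard property stated in Proposition~\ref{Prop_definable_regularity} by reducing it to the classical Morse–Sard theorem for definable functions in $o$-minimal structures. The statement asserts that the set of \emph{critical values} $\{f(x): 0 \in \conv(\D_f(x))\}$ is finite, where criticality is measured through the conservative field $\D_f$ rather than a classical derivative. The key structural input is that both $f$ and $\D_f$ are definable, which allows the whole argument to be carried out within a single $o$-minimal structure using the closure properties from Proposition~\ref{Prop_Definable_operations}.

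\textbf{First} I would show that the critical set $\ca{C} := \{x \in \Rn : 0 \in \conv(\D_f(x))\}$ is itself definable. This is the central technical step. The graph of $\D_f$ is definable by hypothesis, and taking convex hulls of the fibers of a definable set-valued map yields a definable set (in an $o$-minimal structure, the convex hull of a definable set is definable, since it can be expressed via a bounded existential quantification over convex combinations and projections, which Proposition~\ref{Prop_Definable_operations} guarantees preserves definability). The condition $0 \in \conv(\D_f(x))$ then describes the fiber over $0$ of this definable ``convexified graph,'' so $\ca{C}$ is the image of a definable set under a definable projection, hence definable.

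\textbf{Next}, I would consider the image $f(\ca{C}) = \{f(x) : x \in \ca{C}\} \subseteq \bb{R}$. Since $f$ is definable and $\ca{C}$ is definable, the image $f(\ca{C})$ is a definable subset of $\bb{R}$ by the second item of Proposition~\ref{Prop_Definable_operations}. By the defining axiom of an $o$-minimal structure (namely that the elements of $\ca{O}_1$ are exactly finite unions of intervals and points), a definable subset of $\bb{R}$ is either a finite union of points or contains a nondegenerate interval. It therefore suffices to rule out the latter possibility, i.e.\ to show that $f$ cannot take a continuum of critical values.

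\textbf{The hard part} will be establishing that $f$ is \emph{locally constant} on the critical set $\ca{C}$, which forces $f(\ca{C})$ to contain no interval and hence, being definable, to be finite. Here I would invoke the potential-function structure of $\D_f$ from Definition~\ref{Defin_conservative_field}: along any absolutely continuous curve $\gamma$ contained in $\ca{C}$, the change in $f$ is governed by $\int \sup_{l_s \in \D_f(\gamma(s))} \inner{l_s, \dot\gamma(s)} \mathrm{d}s$. Using definability, I would employ the definable curve selection / Whitney-type stratification to decompose $\ca{C}$ into finitely many smooth manifolds on each of which $f$ is $C^1$, and then argue that the criticality condition $0 \in \conv(\D_f(x))$ combined with the chain rule for conservative fields (path-differentiability guarantees $f(\gamma(t))$ is differentiable a.e.\ with derivative $\inner{l, \dot\gamma}$ for some $l \in \D_f(\gamma(t))$) forces $\frac{d}{dt} f(\gamma(t)) = 0$ along curves tangent to each stratum. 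Consequently $f$ is constant on each connected stratum, and since the stratification is finite, $f(\ca{C})$ is finite. The delicate point is correctly relating the convexified criticality condition $0 \in \conv(\D_f(x))$ to the vanishing of the directional derivative along the stratum's tangent directions; this is where the definability of $\D_f$ and the stratification must be combined carefully, and is the step I expect to require the most care.
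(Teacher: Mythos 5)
Your proposal follows essentially the same route as the paper: the stratification you invoke is precisely the $C^r$ variational stratification of $(f,\D_f)$ guaranteed by Theorem 4 of \cite{bolte2021conservative}, after which the paper concludes by applying the definable Sard theorem stratum by stratum (equivalently, your local-constancy argument on strata refined to be compatible with the critical set). The ``delicate point'' you flag---passing from $0\in\conv(\D_f(x))$ to vanishing of the stratum-wise gradient---is resolved in the paper by the one-line observation that the linear projection onto the tangent space commutes with taking convex hulls, so that $\mathrm{Proj}_{T_x\M_x}\left(\conv(\D_f(x))\right)=\conv\left(\mathrm{Proj}_{T_x\M_x}(\D_f(x))\right)=\{\grad f(x)\}$.
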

    \begin{proof}
        According to \cite[Theorem 4]{bolte2021conservative}, $(f, D_f)$ has a $C^r$ variational stratification \cite[Definition 5]{bolte2021conservative}. That is, there exists a stratification $\{\M_i: i \in I\}$ of $\Rn$ such that $\mathrm{Proj}_{T_x\M_x}\left( D_f(x)  \right) = \{\grad f(x)\}$. 
        Here  $\grad f$ refers to the Riemannian gradient of $f$ restricted to the active strata $M_x$ containing $x$. In addition, $T_x\M_x$ refers to the tangent space of $M_x$ at $x$.  It follows from the linearity of the mapping $\mathrm{Proj}_{T_x\M_x}$ that 
        \begin{equation}
            \mathrm{Proj}_{T_x\M_x}\left( \conv( D_f(x))  \right) = \conv\left( \mathrm{Proj}_{T_x\M_x}\left( D_f(x)  \right)\right) = \{\grad f(x)\}. 
        \end{equation}
        As a result, $(f, x\mapsto  \conv(D_f(x)) )$ also has a $C^r$ variational stratification. Then it follows from \cite[Corollary 5]{bolte2021conservative}, by the same proof techniques as \cite[Corollary 5]{bolte2007clarke} and \cite[Theorem 5]{bolte2021conservative} (i.e., applying the definable Sard theorem to each stratum), we can conclude that set $\{f(x):   0 \in \conv(\D_f(x) )\}$ is a finite subset of $\bb{R}$. This completes the proof. 
    \end{proof}

    \subsection{Differential inclusion and ODE approaches for subgradient methods}
    In this subsection, we introduce some fundamental concepts related to the stochastic approximation technique that are essential for the proofs presented in this paper. The concepts discussed in this subsection are mainly from \cite{benaim2005stochastic}. Interested readers could refer to \cite{benaim2006dynamics,benaim2005stochastic,borkar2009stochastic,davis2020stochastic} for more details on the stochastic approximation technique. 

        We first present the definition for the trajectory of a differential 
        inclusion. 
	\begin{defin}
		For any locally bounded graph-closed set-valued mapping $\D: \Rn \rightrightarrows \Rn$,  we say that an absolutely continuous path $x(t)$ in $\Rn$ is a solution for the differential inclusion 
		\begin{equation}
			\label{Eq_def_DI}
			\frac{\mathrm{d} x}{\mathrm{d}t} \in \D(x),
		\end{equation}
		with initial point $x_0$ if $x(0) = x_0$, and $\dot{x}(t) \in \D(x(t))$ holds for almost every $t\geq 0$. 
	\end{defin}
        It is worth mentioning that the results in \cite{aubin2012differential} guarantee the existence of the trajectories of \eqref{Eq_def_DI}.  In the following, we present the definition of the expansion for a given set-valued mapping. 
	\begin{defin}
		\label{Defin_delta_expansion}
		For any given set-valued mapping $\D: \Rn \rightrightarrows \Rn$ and any constant $\delta \geq 0$,  the $\delta$-expansion of $\D$, denoted as $\D^{\delta}$,  is defined as
		\begin{equation}
			\D^{\delta}(x) := \{ w \in \Rn:  \, \exists\, z \in \bb{B}_{\delta}(x) 
   \;\mbox{such that} \; \mathrm{dist}(w, \ca{D}(z))\leq \delta\}.
		\end{equation}
	\end{defin}

        Now consider the sequence $\{\xk\}$ generated by the  following update scheme,  
	\begin{equation}
		\label{Eq_def_Iter}
		\xkp = \xk + \eta_k d_k ,
	\end{equation}
        with a diminishing positive sequence of real numbers $\{\eta_k\}$ as the learning rates. Moreover, $d_k$ represents an inexact evaluation of $\D$ at $\xk$. To illustrate the relationship between the discrete sequence $\{\xk\}$ and the trajectories of the differential inclusion \eqref{Eq_def_DI}, we first introduce the concepts of {\it interpolated process} and {\it perturbed solution} in the following two definitions. 
	\begin{defin}
            \label{Defin_interpolated_process}
		For any given sequence of positive real numbers $\{\eta_k\}$, the  (continuous-time) interpolated process of $\{\xk\}$ with respect to $\{\eta_k\}$ is the mapping $w: \bb{R}_+ \to \Rn$ such that 
		\begin{equation}
			w(\lambda_i + s) := x_{i} + \frac{s}{\eta_i} \left( x_{i+1} - x_{i} \right), \quad s\in[0, \eta_i). 
		\end{equation}
		Here $\lambda_0 := 0$, and $\lambda_i := \sum_{k = 0}^{i-1} \eta_k$ for $i\geq 1$.
	\end{defin}

        \begin{defin}
		\label{Defin_perturbed_solution}
		We say an absolutely continuous function $\gamma$
		is a perturbed solution to \eqref{Eq_def_DI}  if there exists a locally integrable function $u: \bb{R}_+ \to \Rn$, along with $\delta_f: \bb{R}_+ \to \bb{R}$, such that $\lim\limits_{t \to \infty} \delta_f(t) = 0$ and $\dot{\gamma}(t)  \in \D^{\delta_f(t)}(\gamma(t))$. 
	\end{defin}

    The following lemma is an extension of \cite[Proposition 1.3]{benaim2005stochastic}, which allows for inexact evaluations of the set-valued mapping $\D$. It shows that for the sequence $\{\xk\}$ generated by \eqref{Eq_def_Iter}, its interpolated process with respect to $\{\eta_k\}$ is a perturbed solution of the differential inclusion \eqref{Eq_def_DI}.   
        \begin{lem}
		\label{Le_interpolated_process}
		Let $\ca{D}: \Rn \rightrightarrows \Rn$ be a locally bounded set-valued mapping that is nonempty compact convex valued and graph-closed.
		Suppose the following conditions hold in \eqref{Eq_def_Iter}:
		\begin{enumerate}
			\item $\lim_{k\to +\infty} \eta_k = 0$, $\sum_{k = 0}^{+\infty} \eta_k = +\infty$.  
			\item There exist a nonnegative sequence $\{\delta_k\}$  such that $\lim_{k\to \infty} \delta_k = 0$ and $d_k \in \D^{\delta_k}(\xk)$.
			\item $\sup_{k \geq 0} \norm{\xk}<\infty$, $\sup_{k \geq 0} \norm{d_k} < \infty$. 
		\end{enumerate}
		Then the interpolated process of $\{\xk\}$ is a perturbed solution for \eqref{Eq_def_DI}. 
	\end{lem}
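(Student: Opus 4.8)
The plan is to exploit the piecewise-linear structure of the interpolated process and to show directly that its almost-everywhere derivative lands in an appropriate expansion of $\D$ evaluated along the curve itself. First I would note that, on each interval $[\lambda_i, \lambda_{i+1})$, the update \eqref{Eq_def_Iter} gives $x_{i+1} - x_i = \eta_i d_i$, so the formula in Definition \ref{Defin_interpolated_process} collapses to $w(\lambda_i + s) = x_i + s\, d_i$ for $s \in [0, \eta_i)$. Hence $w$ is continuous, piecewise linear, and satisfies $\dot{w}(\lambda_i + s) = d_i$ for almost every $s$. Since condition 3 supplies $M := \sup_{k\ge 0}\norm{d_k} < \infty$, the slopes are uniformly bounded by $M$, so $w$ is globally $M$-Lipschitz and therefore absolutely continuous, as Definition \ref{Defin_perturbed_solution} demands.

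The heart of the argument is to produce a radius $\delta_f(t) \to 0$ with $\dot w(t) \in \D^{\delta_f(t)}(w(t))$. Fix $t = \lambda_i + s$, $s\in[0,\eta_i)$, so that $\dot w(t) = d_i$. By condition 2, $d_i \in \D^{\delta_i}(x_i)$, meaning there exists a witness $z \in \bb{B}_{\delta_i}(x_i)$ with $\mathrm{dist}(d_i, \D(z)) \le \delta_i$. I would then bound the displacement of the curve from the node, $\norm{w(t) - x_i} = s\norm{d_i} \le \eta_i M$, and apply the triangle inequality to transport the witness: $\norm{z - w(t)} \le \norm{z - x_i} + \norm{x_i - w(t)} \le \delta_i + \eta_i M$. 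Thus the same $z$ now lies in $\bb{B}_{\delta_i + \eta_i M}(w(t))$ while still obeying $\mathrm{dist}(d_i, \D(z)) \le \delta_i \le \delta_i + \eta_i M$, which is exactly the statement $d_i \in \D^{\,\delta_i + \eta_i M}(w(t))$ by Definition \ref{Defin_delta_expansion}. Setting $\delta_f(t) := \delta_i + \eta_i M$ for $t \in [\lambda_i, \lambda_{i+1})$ and taking $u \equiv 0$ then realizes the inclusion required of a perturbed solution.

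Finally I would verify $\delta_f(t) \to 0$, which is where condition 1 enters: because $\sum_k \eta_k = +\infty$ we have $\lambda_i \to \infty$, so letting $t\to\infty$ forces the active index $i=i(t) \to \infty$; together with $\eta_i\to 0$ (condition 1) and $\delta_i \to 0$ (condition 2) this yields $\delta_f(t) = \delta_{i(t)} + \eta_{i(t)} M \to 0$, closing the proof. The standing hypotheses that $\D$ is nonempty compact convex valued, locally bounded, and graph-closed, together with $\sup_k\norm{x_k}<\infty$, serve to keep $w$ within a fixed compact set and underpin the downstream convergence analysis rather than this inclusion itself.

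I expect the only genuine subtlety to be the bookkeeping in the second paragraph: arguing that a single radius $\delta_f(t)$ simultaneously absorbs the spatial gap between $w(t)$ and the node $x_i$ (controlled by the Lipschitz term $\eta_i M$) and the oracle inexactness $\delta_i$ at $x_i$. This is precisely where the result extends \cite[Proposition 1.3]{benaim2005stochastic}: the exact-evaluation version needs only $\eta_i M$, whereas allowing $d_i \in \D^{\delta_i}(x_i)$ instead of $d_i \in \D(x_i)$ forces the additive correction $\delta_i$, which is why condition 2 is stated with a vanishing sequence $\{\delta_k\}$. Since the perturbation $u$ does not appear in the inclusion of Definition \ref{Defin_perturbed_solution}, no averaging or martingale estimate on $u$ is needed, and the choice $u\equiv 0$ is admissible.
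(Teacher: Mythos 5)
Your proof is correct: the paper states this lemma without giving a proof (deferring to the argument behind \cite[Proposition 1.3]{benaim2005stochastic}), and your direct verification --- piecewise-linear structure of $w$, transport of the witness $z$ via the triangle inequality to get $\dot w(t)=d_i\in\D^{\,\delta_i+\eta_i M}(w(t))$, and $\delta_f(t)=\delta_{i(t)}+\eta_{i(t)}M\to 0$ because $\lambda_i\to\infty$ --- is exactly the standard argument the paper relies on, with the additive $\delta_i$ term correctly accounting for the inexact evaluations that constitute the stated extension. You also read Definition \ref{Defin_perturbed_solution} correctly in observing that, as written, $u$ plays no role in the inclusion, so $u\equiv 0$ suffices.
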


\section{Convergence Analysis}

In this section, we present the convergence properties of the framework \eqref{Eq_Framework}. Section 3.1 outlines the basic assumptions and the main result in Section 3.1, while Section 3.2 presents the detailed proofs. Furthermore, Section 3.3 exhibits the global stability of the framework \eqref{Eq_Framework}, in the sense that the sequence $\{\xk\}$ remains uniformly bounded with sufficiently small stepsizes. 

\subsection{Basic assumptions and main result}

To establish the convergence properties of \eqref{Eq_Framework}, we first make the following assumptions on the objective function $f$ in \eqref{Prob_Ori}. 
\begin{assumpt}
    \label{Assumption_f}
    \begin{enumerate}
        \item For each $i \in [N]$,  $f_i$ is a locally Lipschitz continuous path-differentiable function that admits $\D_{f_i}$ as its conservative field. 
        \item The set $\left\{f(x):  0 \in \conv\left( \frac{1}{N} \sum_{i = 1}^N \D_{f_i}(x) \right) \right\}$ has empty interior in $\bb{R}$. 
        \item $f$ is bounded from below over $\Rn$. 
    \end{enumerate}
\end{assumpt}

As discussed in \cite{bolte2021conservative} and Section 2.2, the class of path-differentiable functions is general enough to enclose a wide range of optimization problems, particularly in training nonsmooth neural networks. Therefore, Assumption \ref{Assumption_f}(1) is mild in practical implementations. 

Based on the chain rule for conservative fields demonstrated in \cite{bolte2021conservative,bolte2021nonsmooth}, the set-valued mapping $x \mapsto \frac{1}{N} \sum_{i = 1}^N \D_{f_i}(x)$ is a conservative field for $f$. Therefore, throughout this paper, we choose the conservative field for $f$ as 
\begin{equation}
   \D_f(x) := \conv\left(\frac{1}{N} \sum_{i = 1}^N \D_{f_i}(x) \right). 
\end{equation}

Moreover, as demonstrated in Proposition \ref{Prop_definable_regularity}, Assumption \ref{Assumption_f}(2) holds whenever $\{f_i:i\in [N]\}$ and $\{\D_{f_i}:i\in [N]\}$ are definable. Therefore, Assumption \ref{Assumption_f}(2) is a mild assumption in practice.

Furthermore, following the assumptions in \cite{davis2020stochastic,castera2021inertial,le2023nonsmooth,xiao2024adam}, we introduce the assumptions on the framework \eqref{Eq_Framework} as described below.
\begin{assumpt}
    \label{Assumption_Framework}
    \begin{enumerate}
        \item The sequence of indexes $\{i_k\}$ is generated by reshuffling. That is, for any $j\geq 0$, it holds that $\{i_k: jN\leq k< (j+1)N\} = [N]$. 
        \item The sequence of iterates $\{\xk\}$ is uniformly bounded.
        \item The sequences $\{\tau_k\}$ and $\{\mu_k\}$ are positive, non-decreasing. Moreover, there exists a locally bounded function $\psi: \Rn \to \bb{R}_+$ such that $\max\{\tau_k , \mu_k \} \leq \sup_{0\leq i\leq k}\psi(x_i)$ holds for any $k\geq 0$. 
        \item There exists a diminishing sequence $\{\delta_k\}$ such that $g_k \in \D^{\delta_k}_{f_{i_k}}(\xk)$.
    \end{enumerate}
\end{assumpt}

Here are some comments on Assumption \ref{Assumption_Framework}. Assumption \ref{Assumption_Framework}(1) illustrates how the indexes $\{i_k\}$ are generated from $[N]$ in each epoch. In particular, as we do not assume any randomness in choosing $i_k$, Assumption \ref{Assumption_Framework}(1) encompasses a wide range of 
sampling methods for selecting the indexes from $[N]$, including sequential selection and random reshuffling. 
In addition, 
as Lemma \ref{Le_close_xk} illustrates that $\lim_{k \to +\infty} \norm{\xkp - \xk} = 0$, Assumption \ref{Assumption_Framework}(1) covers the scenarios where the indexes $\{i_k\}$ are chosen by mini-batch sequential selection and mini-batch random reshuffling.

Moreover, Assumption \ref{Assumption_Framework}(2) assumes the uniform boundedness of $\{\xk\}$, which is a common assumption in various existing works. Assumption \ref{Assumption_Framework}(3) is a mild condition that can be satisfied in various existing learning-rate-free methods, such as DoG and DoWG. It is easy to verify that the uniform boundedness of $\{\xk\}$ implies the uniform boundedness of the sequences $\{\mu_k\}$ and $\{\tau_k\}$.  Furthermore, Assumption \ref{Assumption_Framework}(4) characterizes how the gradient approximation $g_k$ correlates with the subdifferential $\D_{f_{i_k}}(\xk)$ at the $k$-th iteration.

In the following theorem, we prove the global convergence properties of the framework \eqref{Eq_Framework}. The detailed proof of Theorem \ref{The_convergence} is presented in Section 3.2 for simplicity. 
\begin{theo}
    \label{The_convergence}
     Let $\{\xk\}$ be the sequence generated by \eqref{Eq_Framework}. Suppose Assumption \ref{Assumption_f} and Assumption \ref{Assumption_Framework} hold. Then  any cluster point of sequence $\{\xk\}$ lies in $\{x \in \Rn: 0\in \D_f(x) \}$, and the sequence $\{f(\xk)\}$ converges. 
\end{theo}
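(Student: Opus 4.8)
The plan is to cast the update scheme \eqref{Eq_Framework} into the canonical stochastic-approximation form \eqref{Eq_def_Iter} so that Lemma \ref{Le_interpolated_process} applies, and then invoke the ODE method of \cite{benaim2005stochastic} together with the definable Morse-Sard property (Proposition \ref{Prop_definable_regularity}, whose consequence is Assumption \ref{Assumption_f}(2)) to pin down the cluster points. The key conceptual point is that although the framework uses the momentum iterate $\mkp$, over one full epoch of $N$ reshuffled steps the momentum term averages the stochastic subgradients $g_k$ back to (an inexact evaluation of) the true conservative field $\D_f$, so the \emph{epoch-level} dynamics track the differential inclusion $\dot x \in -\D_f(x)$.

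First I would establish the vanishing-increment lemma referenced in the text (Lemma \ref{Le_close_xk}), namely $\lim_{k\to\infty}\norm{\xkp - \xk} = 0$, by bounding $\norm{\xkp-\xk} = \eta_k\norm{\mkp}$. Here Assumption \ref{Assumption_Framework}(2)--(3) give uniform bounds on $\{\mu_k\},\{\tau_k\}$ and on $\{g_k\}$ (the latter via local boundedness of each $\D_{f_i}$ on the compact closure of $\{\xk\}$), hence a uniform bound $\norm{\mk}\leq G$; and the denominator $\sqrt{\varepsilon_0 + \sum_{i=1}^k \tau_i\norm{m_i}^2}$ forces $\eta_k\to 0$ unless $\sum\tau_i\norm{m_i}^2$ stays bounded, in which case $\norm{m_i}\to 0$ directly. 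Either way $\eta_k\norm{\mkp}\to 0$. The same denominator analysis yields $\sum_k\eta_k = +\infty$, verifying condition (1) of Lemma \ref{Le_interpolated_process}.

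Next I would perform the crucial epoch-aggregation step. Writing the net displacement over epoch $j$ (indices $k=jN,\dots,jN+N-1$) as $x_{jN+N}-x_{jN} = -\sum \eta_k \mkp$, I would expand each $\mkp$ as a geometrically weighted sum of past $g_i$ and show, using $\beta\in[0,1)$, the vanishing increments from the previous paragraph, and the reshuffling property $\{i_k: jN\le k<jN+N\}=[N]$, that this displacement equals $-\bar\eta_j\big(\tfrac1N\sum_{i=1}^N g_{(i)} + e_j\big)$ for an effective stepsize $\bar\eta_j$ and an error $e_j\to 0$. Because each $g_k\in\D^{\delta_k}_{f_{i_k}}(\xk)$ with $\delta_k\to 0$ (Assumption \ref{Assumption_Framework}(4)) and all epoch iterates are within $o(1)$ of one another, the averaged direction lies in $\D_f^{\delta'_j}(x_{jN})$ for some $\delta'_j\to 0$. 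This recasts the epoch sequence in the form \eqref{Eq_def_Iter} with $d_j\in\D_f^{\delta'_j}(x_{jN})$, and $\D_f$ is nonempty compact convex valued, locally bounded, and graph-closed, so all hypotheses of Lemma \ref{Le_interpolated_process} are met and the interpolated process is a perturbed solution of $\dot x\in-\D_f(x)$.

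Finally I would apply the chain-rule characterization of conservative fields: along any solution of $\dot x\in-\D_f(x)$, the potential $f$ is non-increasing and strictly decreasing off the set $\{x:0\in\D_f(x)\}$, making $f$ a Lyapunov function. Combining this with the perturbed-solution conclusion and Bena\"im--Hofbauer--Sorin's limit-set theorem, the limit set of $\{x_{jN}\}$ is contained in a connected subset of $\{0\in\D_f(x)\}$ on which $f$ is constant; Assumption \ref{Assumption_f}(2) (the finite/empty-interior image of critical values) then forces $\{f(x_{jN})\}$ to converge, and by the vanishing increments the full sequence $\{f(\xk)\}$ converges and every cluster point of $\{\xk\}$ lies in $\{x:0\in\D_f(x)\}$. \textbf{The main obstacle} I anticipate is the epoch-aggregation estimate: controlling the momentum memory carried \emph{across} epoch boundaries and proving the averaging error $e_j\to 0$ uniformly, since the momentum buffer does not reset and the adaptive $\eta_k$ varies within an epoch, so the telescoping of the geometric weights must be done carefully against the $o(1)$ drift of the iterates.
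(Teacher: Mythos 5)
Your overall architecture (recast the iteration as an inexact evaluation of $-\D_f$, pass to a perturbed solution of $\dot x\in-\D_f(x)$ via Lemma \ref{Le_interpolated_process}, then invoke the Lyapunov property and the Bena\"im--Hofbauer--Sorin limit-set theorem together with Assumption \ref{Assumption_f}(2)) matches the paper. However, the step you yourself flag as the main obstacle --- single-epoch aggregation --- is a genuine gap, and it does not close as stated. Summing the momentum recursion over one epoch gives
\begin{equation*}
\frac{1}{N}\sum_{k=jN}^{(j+1)N-1} m_{k+1} \;=\; \frac{1}{N}\sum_{k=jN}^{(j+1)N-1} g_k \;+\; \frac{\beta}{N(1-\beta)}\bigl(m_{jN}-m_{(j+1)N}\bigr),
\end{equation*}
so while the first term does land in $\conv\bigl(\D_f^{\delta}(x_{jN})\bigr)$ by reshuffling and the vanishing increments, the boundary term is of the fixed order $\tfrac{\beta}{N(1-\beta)}\norm{m_{jN}-m_{(j+1)N}}$, and there is no reason for $m_{(j+1)N}-m_{jN}$ to vanish when $\eta_k\to 0$ (only $x_{k+1}-x_k$ vanishes, not $m_{k+1}-m_k$). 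Equivalently, in the geometric expansion of $m_{k+1}$ the within-epoch gradients receive \emph{unequal} weights $1-\beta^{(j+1)N-i}$, and under an adversarial reshuffle an unequally weighted combination of the $\D_{f_i}$ need not approximate $\D_f=\conv(\tfrac1N\sum_i\D_{f_i})$. Hence your error $e_j$ does not tend to zero, and the perturbed-solution hypothesis $d_j\in\D_f^{\delta_j'}(x_{jN})$ with $\delta_j'\to0$ fails.

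The paper's resolution is to aggregate not over single epochs but over blocks $[t_j,t_{j+1})$ of $T_j=t_{j+1}-t_j$ iterations with $T_j\to\infty$, where the block endpoints are chosen (Lemma \ref{Le_Defin_tk}) so that simultaneously $\sum_{t_j\le i\le t_{j+1}}\eta_i\to0$ and $\sup|\eta_i/\eta_l-1|\to0$ within each block; then the momentum memory entering the block is damped by $\beta^{T_j+1}\to0$, each shifted average $\tfrac{1}{T_j}\sum_k g_{k-l}$ covers asymptotically whole epochs and hence approximates $\D_f$ (Lemma \ref{Le_aggregrate_gk}), and the non-constant stepsizes can be replaced by $\eta_{t_j}$ at a vanishing cost (Proposition \ref{Prop_aggregrate_mk}). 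You would need to import this growing-block construction to make your argument work. Separately, note that when $\mathop{\lim\inf}_k\eta_k>0$ Lemma \ref{Le_interpolated_process} is inapplicable (it requires $\eta_k\to0$); the paper treats that case by a direct argument (Proposition \ref{Prop_case1_etak_bounded}): there $\sum_k\tau_k\norm{\mk}^2<\infty$ forces $\mk\to0$, hence $g_k\to0$ and $\mathrm{dist}(0,\D_f(\xk))\to0$ without any ODE machinery, so your proof should keep this case split explicit rather than folding both cases into the interpolation argument.
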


\subsection{Proof for Theorem \ref{The_convergence}}

In this subsection, we present a detailed proof for Theorem \ref{The_convergence}. We begin our proof with the following lemma illustrating that the sequences $\{g_k\}$, $\{\mk\}$ and $\{\eta_k\}$ are uniformly bounded.  The proof for Lemma 1 follows directly from the uniform boundedness of $\{\xk\}$ stated in Assumption \ref{Assumption_Framework}(2), along with the fact that $\D_{f_i}$ is both graph-closed and locally bounded for any $i \in [N]$, as shown in Assumption \ref{Assumption_f}(1). Therefore, we omit the proof of Lemma \ref{Le_UB_gk} for the sake of simplicity. 
\begin{lem}
    \label{Le_UB_gk}
    Suppose Assumption \ref{Assumption_f} and Assumption \ref{Assumption_Framework}(2)-(4) hold, then for the sequence   $\{g_k\}$ generated by \eqref{Eq_Framework}, we have that $\sup_{k\geq 0} \norm{g_k} + \norm{\mk} <+\infty$ and $\sup_{k\geq 0} \eta_k < +\infty$. 
\end{lem}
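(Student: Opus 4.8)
The plan is to establish the three uniform bounds in succession, each following from the previous together with the stated hypotheses; the only substantive input is the uniform boundedness of the iterates (Assumption \ref{Assumption_Framework}(2)), and everything else is elementary. First I would bound $\{g_k\}$. By Assumption \ref{Assumption_Framework}(2) there is a compact set $K \subset \Rn$ with $\xk \in K$ for all $k$. Since $\{\delta_k\}$ is diminishing (Assumption \ref{Assumption_Framework}(4)), I may assume $\delta_k \le 1$ for all $k$, so every point $z$ witnessing $g_k \in \D^{\delta_k}_{f_{i_k}}(\xk)$ lies in the compact neighborhood $K^{(1)} := \{z \in \Rn: \mathrm{dist}(z, K) \le 1\}$. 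Each $\D_{f_i}$ is a conservative field and hence locally bounded (Assumption \ref{Assumption_f}(1)), so $G_0 := \max_{i \in [N]} \sup_{z \in K^{(1)},\, d \in \D_{f_i}(z)} \norm{d}$ is finite, the outer maximum being over the finite index set $[N]$. By the definition of the $\delta$-expansion, $\norm{g_k} \le G_0 + \delta_k \le G_0 + 1 =: G$, so $\sup_{k} \norm{g_k} \le G < +\infty$.

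Next I would bound $\{\mk\}$ by unrolling the momentum recursion. Since $\beta \in [0,1)$, the identity $\mkp = \beta^{k+1} m_0 + (1-\beta)\sum_{i=0}^{k}\beta^{k-i} g_i$ holds, and the geometric weights obey $(1-\beta)\sum_{i=0}^{k}\beta^{k-i} = 1 - \beta^{k+1} \le 1$. Combined with $\norm{g_i}\le G$, this gives $\norm{\mkp} \le \norm{m_0} + G$ for all $k$, whence $\sup_{k} \norm{\mk} < +\infty$ and therefore $\sup_{k} (\norm{g_k} + \norm{\mk}) < +\infty$.

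Finally I would bound $\{\etak\}$ directly from its defining formula. The denominator is at least $\sqrt{\varepsilon_0} > 0$ because $\varepsilon_0 > 0$ and each term $\tau_i \norm{m_i}^2$ is nonnegative. For the numerator, Assumption \ref{Assumption_Framework}(3) yields $\mu_k \le \sup_{0 \le i \le k}\psi(x_i)$ with $\psi$ locally bounded; since all $x_i$ lie in $K$, we get $\mu_k \le \sup_{x \in K}\psi(x) =: \Psi < +\infty$. Hence $\etak \le \Psi / \sqrt{\varepsilon_0}$ for all $k$, which proves $\sup_{k} \etak < +\infty$.

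I do not expect a genuine obstacle, which is consistent with the authors omitting the argument. The only step requiring slight care is the uniform bound on $\{g_k\}$: one must pass from the pointwise local boundedness of each individual conservative field to a single constant valid simultaneously for all iterates and all sampled indices $i_k$. This is possible precisely because the iterates remain in the compact set $K$, the expansion radii $\delta_k$ are eventually bounded by $1$, and the index set $[N]$ is finite, so the supremum defining $G_0$ is attained over a compact domain and a finite family.
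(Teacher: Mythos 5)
Your proposal is correct and follows exactly the route the authors indicate when they omit the proof: uniform boundedness of the iterates plus local boundedness of the finitely many conservative fields gives the bound on $\{g_k\}$, unrolling the geometric momentum recursion bounds $\{\mk\}$, and the lower bound $\sqrt{\varepsilon_0}$ on the denominator together with Assumption \ref{Assumption_Framework}(3) bounds $\{\eta_k\}$. The only cosmetic point is that ``I may assume $\delta_k \le 1$'' should be justified by noting that a diminishing sequence is eventually below $1$ and the finitely many exceptional $g_k$ are trivially bounded (or simply replace $1$ by $\sup_k \delta_k < +\infty$); this does not affect the validity of the argument.
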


It is worth mentioning that the sequence of learning rates $\{\eta_k\}$ in the framework \eqref{Eq_Framework} can be non-diminishing. In the following lemma, we present an equivalent condition for the scenarios where sequence $\{\eta_k\}$ is non-diminishing. 
\begin{lem}
    \label{Le_stepsizes}
    Suppose Assumption \ref{Assumption_Framework}(2)-(4) holds. Then $\mathop{\lim\inf}_{k\to +\infty} ~\eta_k >0$ if and only if $\sum_{k = 1}^{+\infty} \tau_k\norm{\mk}^2 <+\infty$. 
\end{lem}
\begin{proof}
    When $\sum_{k = 1}^{+\infty} \tau_k\norm{\mk}^2 <+\infty$, it directly follows from Assumption \ref{Assumption_Framework}(3) 
    that 
    \begin{equation}
        \mathop{\lim\inf}_{k\to +\infty} ~\eta_k \geq \lim_{k\to +\infty} \frac{\inf_{k\geq 0}\mu_k}{ \sqrt{\varepsilon_0 +\sum_{i = 1}^{k} \tau_i \norm{m_i}^2 }} \geq \lim_{k\to +\infty} \frac{\mu_0}{ \sqrt{\varepsilon_0 +\sum_{i = 1}^{k} \tau_i \norm{m_i}^2 }} > 0. 
    \end{equation}
    Conversely, when $\mathop{\lim\inf}_{k\to +\infty} ~\eta_k >0$, it holds that 
    \begin{equation}
        0< \mathop{\lim\inf}_{k\to +\infty} ~\eta_k \leq \lim_{k\to +\infty} \frac{\sup_{i\geq 0}\mu_i }{\sqrt{\varepsilon_0 +\sum_{i = 1}^{k} \tau_i \norm{m_i}^2}}.
    \end{equation}
    From Assumption \ref{Assumption_Framework}(2), we can conclude that $\sup_{k\geq 0} \mu_k <+\infty$ under Assumption 3.2(3). Hence $\lim_{k\to +\infty}\sum_{i = 1}^{k} \tau_i \norm{m_i}^2 <+\infty$. This completes the proof. 
\end{proof}

Based on Lemma \ref{Le_stepsizes}, we present following lemma to show that $\lim_{k\to +\infty}\norm{\xkp - \xk} = 0$ in \eqref{Eq_Framework}. 
\begin{lem}
    \label{Le_close_xk}
    Suppose Assumption \ref{Assumption_f} and Assumption \ref{Assumption_Framework}(2)-(4) hold, then it holds that 
    \begin{equation}
        \lim_{k\to +\infty} \sup_{k\leq j\leq k+N} \norm{x_j - \xk}  = 0. 
    \end{equation}
\end{lem}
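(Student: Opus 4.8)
The plan is to reduce the window statement to the single-step estimate $\lim_{k\to+\infty}\norm{\xkp-\xk}=0$ and then control $\eta_k\norm{\mkp}$ directly. For any $k\le j\le k+N$, applying the triangle inequality to the update $\xkp=\xk-\eta_k\mkp$ gives
\begin{equation*}
\norm{x_j-\xk}\le \sum_{i=k}^{j-1}\norm{x_{i+1}-x_i}=\sum_{i=k}^{j-1}\eta_i\norm{m_{i+1}}\le \sum_{i=k}^{k+N-1}\eta_i\norm{m_{i+1}},
\end{equation*}
so $\sup_{k\le j\le k+N}\norm{x_j-\xk}$ is dominated by a sum of at most $N$ consecutive single-step lengths. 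Since $N$ is fixed, once each term is shown to tend to $0$ the finite sum, and hence the supremum, tends to $0$. Thus it suffices to prove $\lim_{k\to+\infty}\eta_k\norm{\mkp}=0$.

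To establish this, set $S_k:=\varepsilon_0+\sum_{i=1}^{k}\tau_i\norm{m_i}^2$, so that $\eta_k\norm{\mkp}=\mu_k\norm{\mkp}/\sqrt{S_k}$. By Lemma \ref{Le_UB_gk} the sequence $\{\norm{\mk}\}$ is uniformly bounded, and by Assumption \ref{Assumption_Framework}(3) together with the uniform boundedness of $\{\xk\}$ (which forces $\sup_k\mu_k<+\infty$), the numerator $\mu_k\norm{\mkp}$ is bounded by some constant $C$. The sequence $\{S_k\}$ is nondecreasing, hence either diverges to $+\infty$ or converges to a finite limit. In the divergent case $\eta_k\norm{\mkp}\le C/\sqrt{S_k}\to 0$ at once. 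In the convergent case $\sum_{i}\tau_i\norm{m_i}^2<+\infty$ forces $\tau_k\norm{m_k}^2\to 0$; since $\{\tau_k\}$ is positive and nondecreasing we have $\tau_k\ge\tau_1>0$, whence $\norm{\mk}\to 0$, and as $\eta_k=\mu_k/\sqrt{S_k}\le\sup_k\mu_k/\sqrt{\varepsilon_0}$ is bounded, again $\eta_k\norm{\mkp}\to 0$. (Equivalently, the convergent case is exactly the regime $\liminf_k\eta_k>0$ identified in Lemma \ref{Le_stepsizes}.) Combining the two cases yields $\lim_{k\to+\infty}\eta_k\norm{\mkp}=0$, and feeding this into the displayed bound completes the argument.

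The only delicate point is the index mismatch between the momentum factor $\mkp$ in the step length and the partial sum $\sum_{i\le k}\tau_i\norm{m_i}^2$ defining $\eta_k$: the denominator does not incorporate $m_{k+1}$, so one cannot immediately phrase $\eta_k\norm{\mkp}$ as a summable series. This is harmless here precisely because the uniform boundedness from Lemma \ref{Le_UB_gk} makes the numerator bounded irrespective of the shift, so the divergent-denominator case requires no summability of the step lengths at all, and the bounded-denominator case is handled by the monotonicity of $\{\tau_k\}$. All remaining ingredients are standard: boundedness from Lemma \ref{Le_UB_gk}, the control of $\{\tau_k\}$ and $\{\mu_k\}$ from Assumption \ref{Assumption_Framework}(3), and the finiteness characterization of Lemma \ref{Le_stepsizes}.
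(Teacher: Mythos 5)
Your proof is correct and follows essentially the same route as the paper's: the same dichotomy (your split on whether $\sum_i \tau_i\norm{m_i}^2$ diverges is exactly the paper's split between $\lim_{k\to+\infty}\eta_k=0$ and $\liminf_{k\to+\infty}\eta_k>0$, via Lemma \ref{Le_stepsizes}), with the same ingredients from Lemma \ref{Le_UB_gk} and Assumption \ref{Assumption_Framework}(3). If anything you are slightly more explicit than the paper, spelling out the triangle-inequality reduction over the window of $N$ steps and the fact that $\tau_k\geq\tau_1>0$ is needed to pass from $\tau_k\norm{m_k}^2\to 0$ to $\norm{m_k}\to 0$.
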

\begin{proof}
    When  $\mathop{\lim\inf}_{k\to +\infty} \eta_k > 0$, we have that $\sum_{k = 1}^{+\infty} \tau_k \norm{\mk}^2 < +\infty$, and hence $\lim_{k\to +\infty} \norm{\mk} = 0$. In addition, 
    by  Lemma \ref{Le_UB_gk},
    it holds that $\sup_{k\geq 0} \eta_k < +\infty$.
    Thus, we can conclude that 
    \begin{equation}
        \lim_{k\to +\infty} \norm{\xkp - \xk} \leq \mathop{\lim\sup}_{k\to +\infty} \eta_k \lim_{k\to +\infty} \norm{\mk} = 0. 
    \end{equation}

    On the other hand, when $\mathop{\lim}_{k\to +\infty} \eta_k = 0$, from the fact that sequence $\{\mk\}$ is uniformly bounded in Lemma \ref{Le_UB_gk}, we can conclude that $\lim_{k\to +\infty} \norm{\xkp - \xk} = 0$. This completes the proof. 
\end{proof}

In the following, we present Lemma \ref{Le_approximate_evaluation} as an auxiliary lemma that is essential in characterizing that the sequence $\{\mk\}$ can be regarded as an approximated evaluation for $\D_f(\xk)$.

      \begin{lem}
            \label{Le_approximate_evaluation}
            Suppose Assumption \ref{Assumption_f} holds. Given any diminishing positive sequence $\{\delta_k\}$,  any uniformly bounded points $\{\xk\}$, and any sequence $\{d_k\}$ that satisfies $d_k \in \conv\big( \D_f^{\delta_k}(\xk) \big)$. 
            Then for any constant $T>0$, there exists $K_T>0$ such that $d_k \in \D_f^T(\xk)$ holds for any $k\geq K_T$. Moreover, there exists a diminishing positive sequence $\{\tilde{\delta}_k\}$ such that $d_k \in \D_f^{\tilde{\delta}_k}(\xk)$ holds for any $k\geq 0$.
        \end{lem}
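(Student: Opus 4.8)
The plan is to establish the two claims in order: first the ``eventual'' statement (for each fixed $T>0$, $d_k \in \D_f^T(\xk)$ for all large $k$), and then assemble a single diminishing sequence $\{\tilde{\delta}_k\}$ from it by a staircase construction. Throughout I would use that $\D_f$ is nonempty compact convex valued, locally bounded, and graph-closed; these follow from Assumption \ref{Assumption_f}(1) together with the definition $\D_f = \conv(\frac1N\sum_i \D_{f_i})$. In particular each $\D_f(z)$ is compact, so the distance appearing in Definition \ref{Defin_delta_expansion} is attained.

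For the first claim I would argue by contradiction. Suppose there are $T>0$ and a subsequence $k_j\to\infty$ with $d_{k_j}\notin \D_f^T(x_{k_j})$. Since $\{\xk\}$ lies in a compact set and $\{\delta_k\}$ is bounded, local boundedness of $\D_f$ makes $\{d_k\}$ bounded, so after passing to a subsequence I may assume $x_{k_j}\to \bar{x}$ and $d_{k_j}\to \bar{d}$. The key step is to identify the limit $\bar{d}$. By Carath\'eodory's theorem, each $d_{k_j}\in\conv(\D_f^{\delta_{k_j}}(x_{k_j}))$ is a convex combination of at most $n+1$ elements $w^{(j)}_l\in\D_f^{\delta_{k_j}}(x_{k_j})$ with coefficients $\lambda^{(j)}_l$. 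Writing the expansion membership explicitly, for each $l$ there are $z^{(j)}_l\in\bb{B}_{\delta_{k_j}}(x_{k_j})$ and $\tilde{w}^{(j)}_l\in\D_f(z^{(j)}_l)$ with $\norm{w^{(j)}_l-\tilde{w}^{(j)}_l}\le\delta_{k_j}$. Because the index set is finite and everything is bounded, I pass to a further subsequence along which $\lambda^{(j)}_l\to\lambda_l$, $w^{(j)}_l\to w_l$, $z^{(j)}_l\to\bar{x}$ (forced, since $\delta_{k_j}\to0$), and $\tilde{w}^{(j)}_l\to\tilde{w}_l$. Then $w_l=\tilde{w}_l$ (again because $\delta_{k_j}\to0$), graph-closedness of $\D_f$ gives $w_l\in\D_f(\bar{x})$, and convexity of $\D_f(\bar{x})$ yields $\bar{d}=\sum_l\lambda_l w_l\in\D_f(\bar{x})$.

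With $\bar{d}\in\D_f(\bar{x})$ in hand, the contradiction is immediate: for any $\varepsilon\in(0,T)$ and $j$ large, $\norm{\bar{x}-x_{k_j}}\le\varepsilon<T$ while $\mathrm{dist}(d_{k_j},\D_f(\bar{x}))\le\norm{d_{k_j}-\bar{d}}\le\varepsilon<T$, so taking $z=\bar{x}$ in Definition \ref{Defin_delta_expansion} shows $d_{k_j}\in\D_f^T(x_{k_j})$, contradicting the choice of the subsequence. For the second claim I would apply the first with $T=1/m$ to obtain an increasing sequence of indices $K_{1/m}\to\infty$ beyond which $d_k\in\D_f^{1/m}(\xk)$, and then set $\tilde{\delta}_k:=1/m$ for $K_{1/m}\le k<K_{1/(m+1)}$; for the finitely many initial $k<K_1$ I set $\tilde{\delta}_k:=\max\{1,\mathrm{dist}(d_k,\D_f(\xk))\}$, which is finite and gives $d_k\in\D_f^{\tilde{\delta}_k}(\xk)$ via $z=\xk$. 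The resulting $\{\tilde{\delta}_k\}$ is positive, diminishing, and satisfies the required membership for every $k$.

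The main obstacle I anticipate is precisely the presence of the convex hull: the expansion $\D_f^{\delta}(\cdot)$ need not be convex, so $\conv(\D_f^{\delta_k}(\xk))$ can be strictly larger than $\D_f^{\delta_k}(\xk)$, and one cannot pass to the limit termwise. The resolution is that as $\delta_k\to0$ the expansion collapses onto the convex set $\D_f(\bar{x})$; combining Carath\'eodory's theorem (to bound the number of convex-combination terms uniformly, which is what enables the subsequence extraction) with graph-closedness and convex-valuedness of $\D_f$ is exactly what prevents the convex hull from escaping a slightly larger expansion.
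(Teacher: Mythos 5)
Your proof is correct and follows essentially the same route as the paper: a contradiction argument showing that along a subsequence $d_{k_j}$ must approach $\D_f(\bar{x})$ while staying at distance at least $T$ from it, followed by the same staircase construction for $\{\tilde{\delta}_k\}$. Your Carath\'eodory decomposition simply makes explicit the step the paper asserts more tersely (that $d_k \in \conv\big(\D_f^{\delta_k}(\xk)\big)$ with $\delta_k \to 0$ forces $\mathrm{dist}(d_{k_j}, \D_f(\bar{x})) \to 0$), which is a welcome added detail but not a different approach.
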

        \begin{proof}
            We prove this lemma by contradiction. Suppose there exists a  sequence $\{\delta_k\}$, a uniformly bounded sequence $\{\xk\}$,  a sequence $\{d_k\}$ that satisfies $d_k \in \conv\big( \D_f^{\delta_k}(\xk) \big)$, and a constant $T>0$ such that $d_k \notin \D_f^{T}(\xk)$ for infinitely many $k\geq 0$. 

            Then by the uniformly boundedness of $\{\xk\}$,  we can choose a sequence of indexes $\{k_j\}$ satisfying  $x_{k_j} \to \bar{x}$, and $ d_{k_j}\notin \D_f^{T}(x_{k_j})$ for any $j\geq 0$. Notice that $\bar{x} \in \bb{B}_{T}(\xk)$ holds for any sufficiently large $k$, hence $\D_f(\bar{x}) + \bb{B}_{T}(0) \subseteq \D_f^{T}(\xk)$ holds for any sufficiently large $k$. Together with the fact that $ d_{k_j}\notin \D_f^{T}(x_{k_j})$, we can conclude that 
            \begin{equation}
                \label{Eq_Le_approximate_evaluation_0}
                \mathop{\lim\inf}_{j\to +\infty}~\mathrm{dist}(d_{k_j}, \D_f(\bar{x}) )  \geq T >0.
            \end{equation}

            On the other hand, from the fact that $\D_f$ is a convex-valued graph-closed mapping, it holds that $\cap_{\delta>0} \D_f^{\delta}(\bar{x}) = \D_f(\bar{x})$. Then from the fact that $d_k \in \conv\big( \D_f^{\delta_k}(\xk) \big)$, we get
            \begin{equation}
                \label{Eq_Le_approximate_evaluation_1}
                \lim_{j\to+\infty} \mathrm{dist}\left( d_{k_j}, \D_f(\bar{x}) \right) = 0. 
            \end{equation}
            As a result, we obtain a contradiction between \eqref{Eq_Le_approximate_evaluation_0} and \eqref{Eq_Le_approximate_evaluation_1}, and thereby our assumption is not valid.  Thus we can conclude that for any constant $T>0$, there exists $K_T>0$ such that $d_k \in \D_f^{T}(\xk) $ for any $k\geq K_T$. This completes the first part of the proof.

            Finally, by choosing $T$ from $\{\frac{1}{i}:i=1,2,...\}$, and setting $\tilde{\delta}_k = \frac{1}{i}$ for any $k \in \left[K_{\frac{1}{i}}, K_{\frac{1}{i+1}}\right]$, we have that $d_k \in \D_f^{\tilde{\delta}_k}(\xk)$ holds for any $k\geq K_1$. This completes the entire proof. 
            
        \end{proof}

In the next proposition, we prove the convergence properties of \eqref{Eq_Framework} under the scenarios where the sequence of learning rates $\{\eta_k\}$ is non-diminishing. 
\begin{prop}
    \label{Prop_case1_etak_bounded}
    Suppose Assumption \ref{Assumption_f} and Assumption \ref{Assumption_Framework} hold, and $\mathop{\lim\inf}_{k\to +\infty} \eta_k > 0$. Then any cluster point of the sequence $\{\xk\}$ is a $\D_f$-stationary point of $f$, and the sequence of function values $\{f(\xk)\}$ converges.  
\end{prop}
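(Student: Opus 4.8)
The plan is to exploit the hypothesis $\liminf_{k\to\infty}\eta_k>0$ to force both the momentum terms and the sampled subgradients to vanish, and then to combine this with the reshuffling structure to certify stationarity of every cluster point. First I would invoke Lemma \ref{Le_stepsizes}: the assumption $\liminf_{k\to\infty}\eta_k>0$ is equivalent to $\sum_{k=1}^{\infty}\tau_k\norm{\mk}^2<\infty$. Since $\{\tau_k\}$ is positive and non-decreasing by Assumption \ref{Assumption_Framework}(3), we have $\tau_k\ge\tau_1>0$, so $\sum_{k}\norm{\mk}^2<\infty$ and hence $\norm{\mk}\to 0$. Rearranging the momentum recursion $\mkp=\beta\mk+(1-\beta)g_k$ into $g_k=(\mkp-\beta\mk)/(1-\beta)$ and using $\beta\in[0,1)$ then gives $\norm{g_k}\to 0$ along the \emph{whole} sequence, not merely a subsequence; this is the key simplification available in this regime.

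Next, to show that an arbitrary cluster point $\bar x$ (with $x_{k_j}\to\bar x$) satisfies $0\in\D_f(\bar x)$, I would use the reshuffling structure together with Lemma \ref{Le_close_xk}. For each $j$ let $E_j=[m_jN,(m_j+1)N)$ be the epoch containing $k_j$; by reshuffling (Assumption \ref{Assumption_Framework}(1)) every index $i\in[N]$ is sampled exactly once in $E_j$, say at iteration $k(i,j)$. Since $k(i,j)$ and $k_j$ both lie in a window of length $N$, Lemma \ref{Le_close_xk} forces $\norm{x_{k(i,j)}-x_{k_j}}\to 0$, so $x_{k(i,j)}\to\bar x$ for every fixed $i$. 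By Assumption \ref{Assumption_Framework}(4), $g_{k(i,j)}\in\D^{\delta_{k(i,j)}}_{f_i}(x_{k(i,j)})$, and the elementary inclusion $\D^{\delta}_{f_i}(x)\subseteq\D^{\delta+\epsilon}_{f_i}(\bar x)$ whenever $\norm{x-\bar x}\le\epsilon$ lets me transfer this to $g_{k(i,j)}\in\D^{\tilde\delta_j}_{f_i}(\bar x)$ with $\tilde\delta_j:=\max_i(\delta_{k(i,j)}+\norm{x_{k(i,j)}-\bar x})\to 0$. Unwinding the definition of the expansion, there exist $z_{i,j}\to\bar x$ and $y_{i,j}\in\D_{f_i}(z_{i,j})$ with $\norm{g_{k(i,j)}-y_{i,j}}\le\tilde\delta_j$; since $g_{k(i,j)}\to 0$ we get $y_{i,j}\to 0$, and graph-closedness of $\D_{f_i}$ yields $0\in\D_{f_i}(\bar x)$ for every $i\in[N]$. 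Averaging, $0\in\frac1N\sum_i\D_{f_i}(\bar x)\subseteq\conv(\frac1N\sum_i\D_{f_i}(\bar x))=\D_f(\bar x)$, so $\bar x$ is $\D_f$-stationary.

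For the convergence of $\{f(\xk)\}$ I would argue via the cluster set. From $\norm{\xkp-\xk}=\eta_k\norm{\mkp}$, the bound $\sup_k\eta_k<\infty$ (Lemma \ref{Le_UB_gk}) and $\norm{\mk}\to 0$ give $\norm{\xkp-\xk}\to 0$; combined with the uniform boundedness of $\{\xk\}$ (Assumption \ref{Assumption_Framework}(2)), the set $\mathcal C$ of cluster points is nonempty, compact and \emph{connected}. By the previous step $\mathcal C\subseteq\{x:0\in\D_f(x)\}$, so $f(\mathcal C)$ is contained in $\{f(x):0\in\conv(\frac1N\sum_i\D_{f_i}(x))\}$, which has empty interior by Assumption \ref{Assumption_f}(2). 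As $f$ is continuous, $f(\mathcal C)$ is a connected subset of $\bb{R}$, i.e.\ an interval, with empty interior, hence a single point $f^\star$. Finally, $\{f(\xk)\}$ is bounded, and each of its cluster values equals $f$ evaluated at some point of $\mathcal C$, hence equals $f^\star$; a bounded real sequence with a unique cluster value converges, so $f(\xk)\to f^\star$.

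The main obstacle is the middle step: correctly handling the reshuffling bookkeeping so that, for each fixed index $i\in[N]$, one produces a sequence of near-subgradients of $f_i$ at points converging to $\bar x$, and then passing safely to the limit through the $\delta$-expansion. The crucial observation that unlocks this step, and distinguishes the non-diminishing-stepsize regime from the general case, is that $\norm{g_k}\to 0$ holds along the entire sequence; this removes any need for the interpolated-process / perturbed-solution machinery and reduces the stationarity claim to a direct graph-closedness limit.
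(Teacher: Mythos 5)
Your proposal is correct, and for the stationarity part it takes a genuinely different (and more elementary) route than the paper. The paper averages the sampled subgradients over a full epoch, places the average $\frac{1}{N}\sum_{l=jN}^{(j+1)N-1}g_l$ in $\conv\big(\D_f^{\hat\delta_{jN}}(x_{jN})\big)$, invokes Lemma \ref{Le_approximate_evaluation} to remove the convex hull, and then deduces $\mathrm{dist}(0,\D_f(\xk))\to 0$ from graph-closedness of the aggregated field $\D_f$. You instead work index by index: since $\norm{g_k}\to 0$ along the \emph{whole} sequence in this regime, for each fixed $i\in[N]$ the iterations where $i$ is sampled produce near-elements of $\D_{f_i}$ at points converging to $\bar x$ that themselves converge to $0$, and graph-closedness of each individual $\D_{f_i}$ gives $0\in\D_{f_i}(\bar x)$; averaging the zeros then yields $0\in\D_f(\bar x)$. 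This bypasses Lemma \ref{Le_approximate_evaluation} entirely and in fact proves the strictly stronger statement that every cluster point is stationary for every component $f_i$, whereas the paper's epoch-averaging argument is the one that generalizes to the diminishing-stepsize case (where only the averages, not the individual $g_k$, vanish). For the convergence of $\{f(\xk)\}$ the two arguments are essentially the same: you derive connectedness of $f(\mathcal C)$ from connectedness of the cluster set (Ostrowski's theorem for bounded sequences with vanishing successive differences), while the paper establishes connectedness of the set of limiting function values directly by a density argument; both then conclude via the empty-interior condition in Assumption \ref{Assumption_f}(2). One bookkeeping point worth making explicit in your write-up: $k(i,j)$ may precede $k_j$ within the epoch, so the appeal to Lemma \ref{Le_close_xk} should be made from the smaller of the two indices, which is harmless since $|k(i,j)-k_j|<N$.
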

\begin{proof}
    As discussed in Lemma \ref{Le_close_xk}, when  $\mathop{\lim\inf}_{k\to +\infty} \eta_k > 0$, we have that $\lim_{k\to +\infty} \norm{\mk} = 0$ and $\lim_{k\to +\infty} \norm{\xkp - \xk} = 0$. 
    Then let 
    \begin{equation}
        \hat{\delta}_k := \sup_{k\leq j< k+N}\norm{x_{j} - \xk} + \sum_{l = k}^{k+N} \delta_l.
    \end{equation}
    It is easy to verify that $\lim_{k\to +\infty} \hat{\delta}_k = 0$.  Moreover,  from the update scheme of the sequence $\{\mk\}$, we have that 
    \begin{equation}
        \lim_{k\to +\infty} \norm{g_k} \leq \lim_{k\to +\infty} \frac{1}{1-\beta}\left( \norm{\mkp} + \beta \norm{\mk} \right) = 0. 
    \end{equation}
    Thus it holds that 
    \begin{equation}
        \label{Eq_Prop_case1_etak_bounded_1}
        \lim_{j\to +\infty} \norm{\frac{1}{N}\sum_{l = jN}^{(j+1)N-1} g_l  } = 0. 
    \end{equation}

    Notice that $g_l \in \D_{f_{i_l}}^{\delta_{jN}}(x_{jN})$ holds for any $j \geq 0$ and any $l\in \{jN,\ldots, (j+1)N-1\}$, we can conclude that 
    \begin{equation}
        \frac{1}{N}\sum_{l = jN}^{(j+1)N-1} g_l \in \frac{1}{N} \sum_{l = jN}^{(j+1)N-1}  \D_{f_{i_l}}^{\delta_{jN}}(x_{jN}) \subseteq \conv\left( \D_{f}^{\hat{\delta}_{jN}}(x_{jN}) \right). 
    \end{equation}
    As a result, from \eqref{Eq_Prop_case1_etak_bounded_1} and Lemma \ref{Le_approximate_evaluation}, there exists a non-negative diminishing sequence $\{\tilde{\delta}_k\}$ such that 
    \begin{equation}
        \frac{1}{N}\sum_{l = jN}^{(j+1)N-1} g_l \in \D_f^{\tilde{\delta}_{jN}}(x_{jN}), \quad \forall j\geq 0. 
    \end{equation}
    Together with \eqref{Eq_Prop_case1_etak_bounded_1}, we arrive at $\lim_{j \to +\infty} \mathrm{dist}\left(0, \D_f(x_{jN})  \right) = 0$. Then together with Lemma \ref{Le_close_xk} and the fact that $\D_f$ is graph-closed, we can conclude that 
    \begin{equation}
        \lim_{k \to +\infty} \mathrm{dist}\left(0, \D_f(\xk)  \right) = 0.
    \end{equation}
    Therefore, any cluster point of the sequence $\{\xk\}$ lies in the subset $\{x \in \Rn: 0 \in \D_f(x)\}$. 

    Furthermore, let $\ca{B}$ be the set of all cluster points of the sequence $\{\xk\}$. That is,   $\ca{B} = \{x \in \Rn:  \text{ there exists an increasing sequence of integers  $\{k_j\}$ such that } \lim_{j\to +\infty} x_{k_j} = x\} $. Then, from the definition of $\ca{B}$, we can conclude that $\{f(x): x \in \ca{B}\}$ is a closed subset of $\bb{R}$. 
    
    Next we claim that $\{f(x): x \in \ca{B}\}$ is a connected subset of $\bb{R}$. Otherwise, let $\{k_{j,1}\}$ and $\{k_{j,2}\}$ be two increasing subsequences of integers such that 
    \begin{equation}
        \lim_{j\to +\infty} x_{k_{j,1}} = x_1, \quad \lim_{j\to +\infty} x_{k_{j,2}} = x_2, \quad q_1:=\lim_{j\to +\infty} f(x_{k_{j,1}}) <  q_2:=\lim_{j\to +\infty} f(x_{k_{j,2}}). 
    \end{equation}
    Notice that $\lim_{k\to +\infty} \norm{\xkp - \xk} = 0$ implies  $\lim_{k\to +\infty} |f(\xkp) - f(\xk)| = 0$. Then $\{f(x): x \in \ca{B}\} \cap [q_1, q_2]$ must be dense and thus connected in $\bb{R}$. Therefore, $\{f(x): x \in \ca{B}\}$ is a connected subset of $\bb{R}$. 

    Together with  Assumption \ref{Assumption_f}(2) stating that $\{f(x):  0\in \D_f(x)\}$ has empty interior in $\bb{R}$, we can conclude that $\{f(x): x \in \ca{B}\}$ is a singleton. This completes the proof. 
\end{proof}

In the rest of this subsection, we aim to prove the convergence properties of \eqref{Eq_Framework} under the scenarios where the sequence of learning rates $\{\eta_k\}$ is non-diminishing. The following lemma demonstrates that the sequence $\{\eta_k\}$ diminishes at a sublinear rate. 
\begin{lem}
    \label{Le_stpesize_ratio}
    Suppose Assumption \ref{Assumption_f} and Assumption \ref{Assumption_Framework} hold, and $\lim_{k\to +\infty} \eta_k = 0$. Then, it holds that 
    \begin{equation}
        \lim_{k\to +\infty} \frac{\eta_{k+1}}{\eta_k} = 1. 
    \end{equation}
\end{lem}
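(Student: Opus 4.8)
The plan is to work directly from the closed-form expression $\eta_k = \mu_k / \sqrt{S_k}$, where I abbreviate $S_k := \varepsilon_0 + \sum_{i=1}^{k} \tau_i \norm{m_i}^2$. Writing the ratio as a product of two factors,
\begin{equation*}
\frac{\eta_{k+1}}{\eta_k} = \frac{\mu_{k+1}}{\mu_k} \cdot \sqrt{\frac{S_k}{S_{k+1}}},
\end{equation*}
I would show that each factor converges to $1$ separately, and then multiply the limits.

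For the first factor, I would use that $\{\mu_k\}$ is positive and non-decreasing by Assumption \ref{Assumption_Framework}(3), and bounded above: the uniform boundedness of $\{\xk\}$ in Assumption \ref{Assumption_Framework}(2), combined with the local boundedness of $\psi$ in Assumption \ref{Assumption_Framework}(3), forces $\sup_k \mu_k < +\infty$. A bounded monotone sequence converges, so $\mu_k \to \mu_\infty$ for some $\mu_\infty \geq \mu_0 > 0$; hence $\mu_{k+1}/\mu_k \to \mu_\infty/\mu_\infty = 1$.

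For the second factor, since $\lim_{k\to +\infty}\eta_k = 0$ rules out $\liminf_k \eta_k > 0$, Lemma \ref{Le_stepsizes} yields $\sum_{k=1}^{+\infty} \tau_k \norm{m_k}^2 = +\infty$, i.e. $S_k \to +\infty$. The key observation is that the increments of $S_k$ stay uniformly bounded: because $\{\tau_k\}$ is bounded (again via Assumption \ref{Assumption_Framework}(2)--(3)) and $\{m_k\}$ is uniformly bounded by Lemma \ref{Le_UB_gk}, there is a constant $C$ with $S_{k+1} - S_k = \tau_{k+1}\norm{m_{k+1}}^2 \leq C$ for all $k$. Consequently
\begin{equation*}
0 \leq \frac{S_{k+1}}{S_k} - 1 = \frac{\tau_{k+1}\norm{m_{k+1}}^2}{S_k} \leq \frac{C}{S_k} \to 0,
\end{equation*}
so $S_{k+1}/S_k \to 1$ and thus $\sqrt{S_k/S_{k+1}} \to 1$.

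I do not expect a serious obstacle here; the only point requiring care is the second factor, where one must pair the divergence $S_k \to +\infty$ (supplied by Lemma \ref{Le_stepsizes}) with the \emph{uniform} boundedness of the summands $\tau_{k+1}\norm{m_{k+1}}^2$ to conclude that the relative increment vanishes. This is precisely where Lemma \ref{Le_UB_gk} and the structural Assumption \ref{Assumption_Framework} enter, and it is what excludes pathological sequences whose partial sums diverge while individual terms fail to be controlled.
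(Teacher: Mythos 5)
Your proposal is correct and follows essentially the same route as the paper: the same factorization of $\eta_{k+1}/\eta_k$ into the $\mu$-ratio and the square-root of the partial-sum ratio, with Lemma \ref{Le_stepsizes} supplying $S_k \to +\infty$ and Lemma \ref{Le_UB_gk} plus Assumption \ref{Assumption_Framework}(2)--(3) bounding the increments. Your treatment of the first factor (bounded monotone sequence converges to a positive limit) is slightly more explicit than the paper's, but the substance is identical.
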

\begin{proof}
    From Lemma \ref{Le_stepsizes}, when $\lim_{k\to +\infty} \eta_k = 0$, it holds that $\sum_{k = 0}^{+\infty} \tau_k \norm{\mk}^2 =+\infty$. 
    Together with the uniform boundedness of the sequence $\{\mk\}$ in Lemma \ref{Le_UB_gk}, we can conclude that 
    \begin{equation}
        \label{Eq_Le_stpesize_ratio_0}
        \lim_{k\to +\infty} \frac{\varepsilon_0 + \sum_{j = 0}^{k+1} \tau_j \norm{m_j}^2}{\varepsilon_0 + \sum_{j = 0}^k \tau_j \norm{m_j}^2} = 1 + \lim_{k\to +\infty} \frac{\tau_{k+1} \norm{\mkp}^2}{\varepsilon_0 + \sum_{j = 0}^k \tau_j\norm{m_j}^2} = 1. 
    \end{equation}
    Furthermore, from Assumption \ref{Assumption_Framework}(2), the sequence $\{\mu_k\}$ is uniformly bounded and non-decreasing. Therefore, it is easy to verify that 
    \begin{equation}
        \label{Eq_Le_stpesize_ratio_1}
        \lim_{k\to +\infty} \frac{\mu_{k+1}}{\mu_k} = 1. 
    \end{equation}
    Therefore, combining \eqref{Eq_Le_stpesize_ratio_0} and 
    \eqref{Eq_Le_stpesize_ratio_1}, we can conclude that 
    \begin{equation}
        \begin{aligned}
            &\lim_{k\to +\infty} \frac{\eta_{k+1}}{\eta_k} 
            = \lim_{k\to +\infty} \sqrt{\frac{\varepsilon_0 + \sum_{j = 0}^{k} \tau_j \norm{m_j}^2}{\varepsilon_0 + \sum_{j = 0}^{k+1} \tau_j \norm{m_j}^2}} \cdot \frac{\mu_{k+1}}{\mu_k} = 1. 
        \end{aligned}
    \end{equation}
    This completes the proof. 
\end{proof}

As shown in Lemma \ref{Le_close_xk}, Lemma \ref{Le_stpesize_ratio}, when $ \lim_{k\to +\infty}\eta_k = 0$, for any positive integer $T>0$, it holds that 
\begin{equation}
    \label{Eq_accumulation_T}
    \lim_{k\to +\infty}\sup_{ k - TN\leq i, l\leq k} \norm{x_i - x_l} = 0, \quad \lim_{k\to +\infty}  \sum_{ k - TN \leq i\leq k} \eta_i = 0, \quad \lim_{k\to +\infty} \sup_{ k - TN\leq i, l\leq k} \left|\frac{\eta_i}{\eta_l} - 1\right| = 0.
\end{equation}
Therefore, we have the following auxiliary lemma. 
\begin{lem}
    \label{Le_Defin_tk}
    Suppose Assumption \ref{Assumption_f} and Assumption \ref{Assumption_Framework} hold, and $\lim_{k\to +\infty} \eta_k = 0$. Then there exists an increasing sequence $\{t_j\} \subset  \{jN: j\geq 0\}$ such that 
\begin{equation}
    \label{Eq_Le_Defin_tk_0}
    \begin{aligned}
        &\lim_{j\to +\infty} t_{j+1}-t_{j} = +\infty, \quad \lim_{j\to +\infty}  
        \sum_{ {t_{j}- T_j} \leq i\leq t_{j+1}} \eta_i = 0, \quad   \lim_{j\to +\infty} \sup_{ {t_{j} - T_j}\leq i, l\leq t_{j+1}} \left|\frac{\eta_i}{\eta_l} - 1\right| = 0,  
    \end{aligned}
\end{equation}
where $T_j := t_{j+1} - t_j$. 
\end{lem}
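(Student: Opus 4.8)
The plan is to convert the family of limits in \eqref{Eq_accumulation_T}, each of which is valid only for a \emph{fixed} window length, into a single diagonal sequence $\{t_j\}$ whose window length is allowed to grow. The entire difficulty lies in the fact that the gap $T_j$ plays two conflicting roles: it is the quantity we must send to infinity, and (after doubling) it is the width of the window $[t_j-T_j,\,t_{j+1}]=[t_{j+1}-2T_j,\,t_{j+1}]$ over which the two accumulation quantities must remain small. These pull in opposite directions, so the gap cannot be prescribed in advance; it must be chosen adaptively in response to the decay rate of $\{\eta_k\}$.

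First I would record the quantitative content of \eqref{Eq_accumulation_T}. For every fixed positive integer $s$, its second and third limits (taken with $T=s$) furnish a threshold $L_s\in\{N,2N,\dots\}$, which after the harmless replacements $L_s\mapsto\max(L_1,\dots,L_s,sN)$ may be assumed non-decreasing in $s$ with $L_s\geq sN$ (hence $L_s\to\infty$), such that for every multiple of $N$ with $k\geq L_s$,
\[
\sum_{k - sN \leq i \leq k} \eta_i < \frac{1}{s}, \qquad \sup_{k - sN \leq i,\, l \leq k} \left| \frac{\eta_i}{\eta_l} - 1 \right| < \frac{1}{s}.
\]
These thresholds encode precisely how slowly $\{\eta_k\}$ decays, and the point of the construction is that I make no assumption on their growth.

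Next I would build $\{t_j\}$ recursively. Put $t_0:=L_2$ and, given $t_j$, set
\[
g_j := \max\left\{ g \in \mathbb{N} : 1 \leq g \leq j+1,\ L_{2g} \leq t_j + gN \right\}, \qquad t_{j+1} := t_j + g_j N .
\]
The defining set is non-empty because $g=1$ always qualifies once $t_j\geq L_2-N$, which holds for all $j$ by the choice of $t_0$ and monotonicity of $\{t_j\}$, while the cap $g\leq j+1$ keeps the maximum finite. Since $g_j\geq 1$, the sequence $\{t_j\}\subset\{jN:j\geq0\}$ is strictly increasing, so $t_j\to\infty$. Because each $L_s$ is finite and $t_j\to\infty$, for any fixed $G$ we eventually have both $L_{2G}\leq t_j+GN$ and $G\leq j+1$, whence $g_j\geq G$ for all large $j$; thus $g_j\to\infty$ and $T_j=g_jN\to\infty$, which is the first limit in \eqref{Eq_Le_Defin_tk_0}. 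For the other two, $g_j$ lies in the defining set, so $t_{j+1}=t_j+g_jN\geq L_{2g_j}$, and $L_{2g_j}\geq 2g_jN$ forces $t_j-g_jN\geq0$; hence $[t_j-T_j,\,t_{j+1}]=[t_{j+1}-2g_jN,\,t_{j+1}]$ is a valid $2g_j$-epoch window ending at a point past $L_{2g_j}$. Applying the threshold bound with $s=2g_j$ and $k=t_{j+1}$ gives
\[
\sum_{t_j - T_j \leq i \leq t_{j+1}} \eta_i < \frac{1}{2 g_j}, \qquad \sup_{t_j - T_j \leq i,\, l \leq t_{j+1}} \left| \frac{\eta_i}{\eta_l} - 1 \right| < \frac{1}{2 g_j},
\]
and both right-hand sides vanish as $g_j\to\infty$, yielding the remaining two limits.

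The step I expect to be the crux is the recursive selection of $g_j$. One is tempted to fix a rate such as $g_j=j$, but this fails whenever $\{\eta_k\}$ decays so slowly that $L_s$ grows super-linearly, because then the window sum over $2g_jN$ terms cannot be driven to zero. Choosing $g_j$ \emph{maximally} subject to $t_{j+1}\geq L_{2g_j}$ is exactly what lets the gap track the unknown decay rate of $\{\eta_k\}$, while the simultaneous cap $g\leq j+1$ reconciles the slow-decay regime (where $L_s$ is large and $g_j$ is forced to grow slowly) with the fast-decay regime (where the cap itself supplies the growth) inside one uniform argument.
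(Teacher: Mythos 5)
Your proposal is correct and follows essentially the same route as the paper: both extract quantitative thresholds from \eqref{Eq_accumulation_T} for each fixed window length and then diagonalize, choosing each gap $T_j$ greedily (as large as possible subject to the iterate having passed the threshold for the corresponding doubled window). Your version is somewhat more carefully written — the explicit cap $g\leq j+1$ guaranteeing finiteness of the maximum, the normalization $L_s\geq sN$ ensuring the window $[t_j-T_j,t_{j+1}]$ is well defined, and the explicit use of $s=2g_j$ for the doubled window — but the underlying construction is the same as the paper's recursive definition of $t_{j+1}=t_j+i_jN$ with $i_j=\sup\{i: t_j\geq k_i^*\}$.
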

\begin{proof}
    From \eqref{Eq_accumulation_T}, we can conclude that for any $\hat{K} > 0$, there exists $k_{\hat{K}}$ such that for any $k\geq k_{\hat{K}}$, it holds that
    \begin{equation}
        \label{Eq_Le_Defin_tk_1}
        \sup_{ k - \hat{K}N\leq i, l\leq k} \norm{x_i - x_l} \leq \frac{1}{\hat{K}}, \quad \sum_{ k - \hat{K}N \leq i\leq k + \hat{K}N} \eta_i \leq \frac{1}{\hat{K}}, \quad  \sup_{ k - \hat{K}N\leq i, l\leq k+ \hat{K}N} \left|\frac{\eta_i}{\eta_l} - 1\right| \leq \frac{1}{\hat{K}}.
    \end{equation}
    Then by choosing $\hat{K} = 1,2,\ldots $, we can choose $k_1\leq k_2\leq \ldots$ such that for any $i \in \bb{N}_+$, \eqref{Eq_Le_Defin_tk_1} holds  with $\hat{K} = i$ for any $k\geq k_i$. Then, we can recursively define $k^*_{i+1} = \lceil(k_{i+1}-k^*_i)/i\rceil * i + k^*_i$ with $k^*_1 = 0$. Furthermore, we recursively define $t_j$ by 
    $t_0 = 0$ and $t_{j+1} = t_j + i_j N$, where $i_j = \sup \{i: t_j \geq k^*_i\}$. Then, from the definition of $\{t_j\}$, we can easily verify that \eqref{Eq_Le_Defin_tk_0} holds. This completes the proof.     
\end{proof}

The following lemma illustrates that the average of the sequence $\{g_k\}$ over $t_j \leq k \leq t_{j+1}$ is an approximated evaluation to $\D_f(x_{t_j})$. 
\begin{lem}
    \label{Le_aggregrate_gk}
    Suppose Assumption \ref{Assumption_f} and Assumption \ref{Assumption_Framework} hold, and $\lim_{k\to +\infty} \eta_k = 0$. Then for any $j\geq 0$, there exists a non-negative diminishing sequence $\{\tilde{\delta}^{\star}_k\}$ such that   
    \begin{equation}
        \frac{1}{T_j}\sum_{k = t_j}^{t_{j+1}-1} g_{k-l} \in   \D_f^{\tilde{\delta}^{\star}_{t_j}}(x_{t_{j}}),
    \end{equation}
    holds for any $k\geq 0$ and any  $0 \leq l \leq T_j$. Here the sequence $\{t_j\}$ are chosen as in \eqref{Eq_Le_Defin_tk_0}. 
\end{lem}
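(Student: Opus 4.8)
The plan is to mirror the template already used in the proof of Proposition~\ref{Prop_case1_etak_bounded}: rewrite the windowed average as an approximate evaluation of the component fields $\D_{f_i}$ at the single reference point $x_{t_j}$, assemble these into an element of $\conv\big(\D_f^{\delta'_j}(x_{t_j})\big)$ for some diminishing radius $\delta'_j$, and then invoke Lemma~\ref{Le_approximate_evaluation} to upgrade membership in $\conv(\D_f^{\delta'}(\cdot))$ to membership in $\D_f^{\tilde\delta^\star}(\cdot)$. The genuinely new feature relative to Proposition~\ref{Prop_case1_etak_bounded} is that the averaging window now has length $T_j\to+\infty$ rather than a single epoch, and is shifted by an arbitrary $l\in[0,T_j]$; consequently every estimate must be made uniform in $l$.

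First I would localize all the relevant iterates. Each index $m$ occurring in the shifted sum, namely $t_j-l\leq m\leq t_{j+1}-1-l$ with $0\leq l\leq T_j$, lies in the extended range $[t_j-T_j,\,t_{j+1}]$. Using $x_{m+1}-x_m=-\eta_m m_{m+1}$, the bound $\sup_m\norm{m_{m+1}}<\infty$ from Lemma~\ref{Le_UB_gk}, and the property $\sum_{t_j-T_j\leq i\leq t_{j+1}}\eta_i\to0$ from \eqref{Eq_Le_Defin_tk_0}, the quantity $\sup_{t_j-T_j\leq m\leq t_{j+1}}\norm{x_m-x_{t_j}}$ tends to $0$. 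Combining this with $\delta_m\to0$ and the elementary fact that $w\in\D^{\delta_1}(y)$ together with $\norm{y-x}\leq\delta_2$ give $w\in\D^{\delta_1+\delta_2}(x)$ (immediate from Definition~\ref{Defin_delta_expansion}), I would set $\bar\delta_j:=\sup_{t_j-T_j\leq m\leq t_{j+1}}\big(\delta_m+\norm{x_m-x_{t_j}}\big)\to0$ and conclude that $g_m\in\D_{f_{i_m}}^{\bar\delta_j}(x_{t_j})$ for every $m$ in the range, uniformly in $l$.

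The core step is a counting argument. Since $t_j,t_{j+1}\in\{jN:j\geq0\}$, the window length $T_j=cN$ is a multiple of $N$ with $c=T_j/N\to+\infty$. A block of $cN$ consecutive indices meets at most $c+1$ epochs: the $c-1$ interior epochs are complete, so by the reshuffling property (Assumption~\ref{Assumption_Framework}(1)) each index of $[N]$ occurs exactly once in each of them, while the two boundary epochs are partial and contribute $0$ or $1$ occurrence each. Hence, writing $\nu_i$ for the number of $m$ in the shifted window with $i_m=i$, one has $|\nu_i-c|\leq1$ for every $i$. Setting $h_i:=\frac{1}{\nu_i}\sum_{m:\,i_m=i}g_m\in\conv\big(\D_{f_i}^{\bar\delta_j}(x_{t_j})\big)$ (a convex combination of points of $\D_{f_i}^{\bar\delta_j}(x_{t_j})$, valid once $c\geq2$ so that $\nu_i\geq1$), I would decompose
\[
\frac{1}{T_j}\sum_{k=t_j}^{t_{j+1}-1}g_{k-l}=\frac{1}{N}\sum_{i=1}^N h_i+\frac{1}{N}\sum_{i=1}^N\frac{\nu_i-c}{c}\,h_i,
\]
and bound the second (error) term in norm by $\tfrac{G}{c}=\tfrac{GN}{T_j}$, where $G:=\sup_k\norm{g_k}<\infty$; this bound is independent of $l$ and vanishes as $j\to\infty$. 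Using $\frac1N\sum_i\conv(A_i)=\conv\big(\frac1N\sum_i A_i\big)$, the main term lies in $\conv\big(\frac1N\sum_i\D_{f_i}^{\bar\delta_j}(x_{t_j})\big)\subseteq\conv\big(\D_f^{\hat\delta_j}(x_{t_j})\big)$ for a diminishing $\hat\delta_j$, exactly as in the corresponding step of Proposition~\ref{Prop_case1_etak_bounded}; absorbing the $O(N/T_j)$ error into the radius gives $\frac{1}{T_j}\sum_{k=t_j}^{t_{j+1}-1}g_{k-l}\in\conv\big(\D_f^{\delta'_j}(x_{t_j})\big)$ with $\delta'_j:=\hat\delta_j+GN/T_j\to0$, uniformly over $0\leq l\leq T_j$. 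Running the argument of Lemma~\ref{Le_approximate_evaluation} with a supremum over $l$ (so that the output radius is common to all shifts) then yields the desired diminishing $\{\tilde\delta^\star_k\}$ with the stated inclusion.

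The step I expect to be most delicate is twofold. First, the count bound $|\nu_i-c|\leq1$ and all the error radii must be kept uniform over the shift $l$, precisely because the shifted window is in general \emph{not} epoch-aligned, so the clean ``once per epoch'' structure only survives on the interior full epochs. Second, and more subtly, the inclusion $\frac1N\sum_i\D_{f_i}^{\bar\delta_j}(x_{t_j})\subseteq\conv\big(\D_f^{\hat\delta_j}(x_{t_j})\big)$ with $\hat\delta_j\to0$ is not automatic: since each $\D_{f_i}$ is merely graph-closed and locally bounded, outer semicontinuity is not uniform across base points, so one cannot claim a single modulus $\hat\delta_j$ valid at every $x_{t_j}$ from $\bar\delta_j\to0$ alone. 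The safe way to close this gap is to carry the full windowed average directly into the limiting (contradiction) scheme of Lemma~\ref{Le_approximate_evaluation}, extracting a cluster point $\bar x$ of $\{x_{t_j}\}$ and invoking outer semicontinuity of each $\D_{f_i}$ only at the \emph{single} point $\bar x$, where it does hold; this is what makes $\mathrm{dist}\big(\frac{1}{T_j}\sum_k g_{k-l},\,\D_f(\bar x)\big)\to0$ and forces the contradiction, thereby justifying the diminishing radius and the uniformity in $l$ simultaneously.
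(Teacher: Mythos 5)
Your proposal is correct and follows essentially the same route as the paper's proof: localize every iterate in the extended window $[t_j-T_j,\,t_{j+1}]$ to $x_{t_j}$ via $\sum_{t_j-T_j\leq i\leq t_{j+1}}\eta_i\to 0$, use the reshuffling structure to reduce the shifted average to a uniform average over $[N]$ up to an $O(N/T_j)$ boundary error, and then invoke Lemma~\ref{Le_approximate_evaluation}. The only difference is bookkeeping --- you count occurrences ($|\nu_i-c|\leq 1$) where the paper instead trims the shifted window to its maximal epoch-aligned sub-interval $[\tilde s_{l,j,1},\tilde s_{l,j,2})$ and bounds the discarded $<2N$ terms --- and your closing remark about folding the inclusion $\frac1N\sum_i\D_{f_i}^{\bar\delta_j}(x_{t_j})\subseteq\conv\bigl(\D_f^{\hat\delta_j}(x_{t_j})\bigr)$ into the compactness/contradiction argument of Lemma~\ref{Le_approximate_evaluation} is in fact more careful than the paper, which asserts that inclusion without further justification.
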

\begin{proof}
    From Assumption \ref{Assumption_Framework}(2) and Lemma \ref{Le_Defin_tk}, there exists a diminishing nonnegative sequence $\{\delta^{\star}_k\}$ such that for any $l\leq T_j$, $g_{k-l} \in \D_{f_{i_{k-l}}}^{\delta_{k-l}^{\star}}(x_{t_j})$. 
    As a result, it holds that 
    \begin{equation}
        \frac{1}{T_j}\sum_{k = t_j}^{t_{j+1}-1} g_{k-l} \in \frac{1}{T_j}\sum_{k = t_j}^{t_{j+1}-1}\D_{f_{i_{k-l}}}^{\delta_{k-l}^{\star}}(x_{t_j}). 
    \end{equation}

    Let $\tilde{s}_{l,j,1}:= \inf\{kN: kN \geq t_j - l\}$, and 
    $\tilde{s}_{l,j,2}:= \sup\{kN: kN \leq t_{j+1} - l\}$. Then it is easy to verify that $\tilde{s}_{l,j,1} - (t_j - l) < N$ and $(t_{j+1} - l) - \tilde{s}_{l,j,2} < N$ holds for any $j \geq 0$. Moreover, since  $ \{\tilde{s}_{l,j,1}, \tilde{s}_{l,j,2} : j\geq 0\}\subset \{jN:j\geq 0\}$ for any $l\geq 0$,  together with the fact that sequence $\{i_k\}$ is drawn by reshuffling,  we can conclude that there exists a diminishing sequence $\{\hat{\delta}^{\star}_k\}$ such that 
    \begin{equation}
        \begin{aligned}
        & \frac{1}{\tilde{s}_{l,j,2} - \tilde{s}_{l,j,1}}\sum_{k = \tilde{s}_{l,j,1}}^{\tilde{s}_{l,j,2}-1}\D_{f_{i_{k-l}}}^{\delta_{k-l}^{\star}}(x_{t_j}) \subseteq   \mathrm{conv}\left(  \D_f^{\hat{\delta}^{\star}_{t_j}}(x_{t_j})\right).
        \end{aligned}
    \end{equation}

    Furthermore, from the fact that $\lim_{j\to +\infty}T_j =+\infty$ in Lemma \ref{Le_Defin_tk}, we can conclude that 
    \begin{equation}
        \begin{aligned}
            &\lim_{j\to +\infty} \norm{\frac{1}{T_j}\sum_{k = t_j}^{t_{j+1}-1} g_{k-l} - \frac{1}{\tilde{s}_{l,j,2} - \tilde{s}_{l,j,1}}\sum_{k = \tilde{s}_{l,j,1}}^{\tilde{s}_{l,j,2}-1} g_{k-l}}\\
            \leq{}& \lim_{j\to +\infty} \norm{\frac{1}{T_j}\sum_{k = t_j}^{t_{j+1}-1} g_{k-l} - \frac{1}{T_j}\sum_{k = \tilde{s}_{l,j,1}}^{\tilde{s}_{l,j,2}-1} g_{k-l}}  + \lim_{j\to +\infty} \norm{\left(\frac{1}{T_j} - \frac{1}{\tilde{s}_{l,j,2} - \tilde{s}_{l,j,1}} \right)\sum_{k = \tilde{s}_{l,j,1}}^{\tilde{s}_{l,j,2}-1} g_{k-l}}\\
            = {}& 0. 
        \end{aligned}
    \end{equation}
    Then we have that there exists a nonnegative diminishing sequence $\{\tilde{\delta}_k^*\}$ such that 
    \begin{equation}
        \frac{1}{T_j}\sum_{k = t_j}^{t_{j+1}-1} g_{k-l} \in \conv\left( \D_f^{\tilde{\delta}^*_{t_j}}(x_{t_{j}}) \right).
    \end{equation}

    Therefore, Lemma \ref{Le_approximate_evaluation} illustrates that there exists a non-negative diminishing sequence $\{\tilde{\delta}^{\star}_k\}$ such that 
    \begin{equation}
        \frac{1}{T_j}\sum_{k = t_j}^{t_{j+1}-1} g_{k-l} \in   \D_f^{\tilde{\delta}^{\star}_{t_j}}(x_{t_{j}}). 
    \end{equation}
    This completes the proof. 
\end{proof}

Based on Lemma \ref{Le_Defin_tk} and Lemma \ref{Le_aggregrate_gk}, we present Proposition \ref{Prop_aggregrate_mk} showing that the average of $\eta_k \mkp$ over $t_j \leq k \leq t_{j+1}$ is also an approximated evaluation to $\D_f(x_{t_j})$. 
\begin{prop}
    \label{Prop_aggregrate_mk}
    Suppose Assumption \ref{Assumption_f} and Assumption \ref{Assumption_Framework} hold, and $\lim_{k\to +\infty} \eta_k = 0$. Then there exists a non-negative diminishing sequence $\{\tilde{\delta}^{\star}_k\}$ such that
    \begin{equation}
         \frac{1}{T_j \eta_{t_j}}\sum_{k = t_j}^{t_{j+1}-1}  \eta_k \mkp \in   \D_f^{\tilde{\delta}^{\star}_{t_j}}(x_{t_{j}}) 
    \end{equation}
    holds for any $k\geq 0$. Here the sequence $\{t_j\}$ are chosen as \eqref{Eq_Le_Defin_tk_0}. 
\end{prop}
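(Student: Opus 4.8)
The plan is to express the momentum-weighted block average in terms of the plain block averages of gradients that are already controlled by Lemma \ref{Le_aggregrate_gk}, and then to collapse all the accumulated perturbations into one diminishing sequence via Lemma \ref{Le_approximate_evaluation}. First I would unroll the recursion $\mkp = \beta \mk + (1-\beta)g_k$ into the closed form
\[
    \mkp = \beta^{k+1} m_0 + (1-\beta)\sum_{l=0}^{k}\beta^l g_{k-l}.
\]
Because $\beta\in[0,1)$ and both $\{\mk\}$ and $\{g_k\}$ are uniformly bounded by Lemma \ref{Le_UB_gk}, the initial term $\beta^{k+1}m_0$ averages out to zero over a block whose length $T_j\to+\infty$.

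Next I would eliminate the stepsize weights by replacing each $\eta_k$ with $\eta_{t_j}$. The triangle inequality gives
\[
    \norm{\frac{1}{T_j\eta_{t_j}}\sum_{k=t_j}^{t_{j+1}-1}\eta_k\mkp - \frac{1}{T_j}\sum_{k=t_j}^{t_{j+1}-1}\mkp} \leq \sup_{t_j\leq k\leq t_{j+1}}\left|\frac{\eta_k}{\eta_{t_j}}-1\right|\cdot\sup_{k\geq 0}\norm{\mkp},
\]
whose right-hand side vanishes as $j\to+\infty$ by the third limit in \eqref{Eq_Le_Defin_tk_0} together with Lemma \ref{Le_UB_gk}. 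Hence it suffices to analyze the unweighted block average $\frac{1}{T_j}\sum_{k=t_j}^{t_{j+1}-1}\mkp$.

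Substituting the closed form and interchanging the order of summation then rewrites this unweighted average as a convex combination, with weights $(1-\beta)\beta^l$, of the inner averages $\frac{1}{T_j}\sum_{k=t_j}^{t_{j+1}-1}g_{k-l}$ over the lag $l$. For every lag $0\leq l\leq T_j$, Lemma \ref{Le_aggregrate_gk} places the corresponding inner average in $\D_f^{\tilde{\delta}^{\star}_{t_j}}(x_{t_j})$, whereas the lags $l>T_j$ carry total weight $\beta^{T_j+1}$, which tends to $0$ since $T_j\to+\infty$ and contributes only a uniformly bounded (hence diminishing) error. Consequently the block average lies in $\conv(\D_f^{\bar{\delta}_{t_j}}(x_{t_j}))$ up to a diminishing perturbation, for a suitable diminishing sequence $\{\bar{\delta}_k\}$.

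Finally, gathering the three diminishing contributions — the initial-condition term, the stepsize-replacement error, and the momentum tail — into a single diminishing sequence and invoking Lemma \ref{Le_approximate_evaluation} (which turns membership in $\conv(\D_f^{\delta})$ into membership in $\D_f^{\delta'}$ for a still-diminishing $\delta'$) yields the asserted membership in $\D_f^{\tilde{\delta}^{\star}_{t_j}}(x_{t_j})$. I expect the main obstacle to be the simultaneous bookkeeping of these error sources, and in particular checking that the momentum lag $l$ in $g_{k-l}$ stays inside the window $0\leq l\leq T_j$ on which Lemma \ref{Le_aggregrate_gk} is valid; the geometric decay of the momentum weights is precisely what renders the out-of-window tail negligible.
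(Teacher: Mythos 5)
Your proposal is correct and follows essentially the same route as the paper's proof: unroll the momentum recursion into a geometrically weighted sum of past gradients (your full unrolling to $m_0$ with a tail of weight $\beta^{T_j+1}$ is the same decomposition as the paper's truncation at $m_{k-T_j}$), control the in-window lags via Lemma \ref{Le_aggregrate_gk}, replace $\eta_k$ by $\eta_{t_j}$ using the ratio control from \eqref{Eq_Le_Defin_tk_0} and the uniform bound on $\{\mk\}$, and collapse the perturbations with Lemma \ref{Le_approximate_evaluation}. The bookkeeping concern you flag (keeping the lag inside the window where Lemma \ref{Le_aggregrate_gk} applies) is handled the same way in the paper.
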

\begin{proof}
    From the update scheme of \eqref{Eq_Framework},  for any $k\geq 0$, it holds that 
    \begin{equation}
        \label{Eq_Prop_aggregrate_mk_2}
        \mkp =  \beta^{T_j+1}m_{k-T_j} + \sum_{l = k-T_j}^{k} (1-\beta)\beta^{k-l} g_{l}. 
    \end{equation}
    Substitute \eqref{Eq_Prop_aggregrate_mk_2} into the formulation of 
    $\sum_{k = t_j}^{t_{j+1}-1} m_{k+1}$, we have 
    \begin{equation}
        \begin{aligned}
            &\frac{1}{T_j}\sum_{k = t_j}^{t_{j+1}-1}  \mkp = \frac{1}{T_j}\sum_{k = t_j}^{t_{j+1}-1} \left(\beta^{T_j+1}m_{k-T_j} + \sum_{l = 0}^{T_j} (1-\beta)\beta^l g_{k-l}\right)\\
            ={}& \frac{1}{T_j}\sum_{k = t_j}^{t_{j+1}-1}\beta^{T_j+1}m_{k-T_j} +  \sum_{l = 0}^{T_j} (1-\beta)\beta^l \left(\frac{1}{T_j}\sum_{k = t_j}^{t_{j+1}-1}  g_{k-l}\right).
        \end{aligned}
    \end{equation}
    As Lemma \ref{Le_UB_gk} illustrates that sequence $\{\mk\}$ is uniformly bounded, it holds from the definition of sequences $\{T_k\}$ and $\{t_k\}$ that 
    \begin{equation}
        \lim_{j\to +\infty} \norm{\frac{1}{T_j}\sum_{k = t_j}^{t_{j+1}-1}\beta^{T_j+1}m_{k-T_j}} \leq \lim_{j\to +\infty} \beta^{T_j+1} \sup_{k\geq 0} \norm{\mk} = 0. 
    \end{equation}
    Moreover, from Assumption \ref{Assumption_Framework}(2) and Lemma \ref{Le_Defin_tk}, there exists a diminishing nonnegative sequence $\{\delta^{\star}_k\}$ such that for any $l\leq T_j$, $g_{k-l} \in \D_{f_{i_{k-l}}}^{\tilde{\delta}_{t_j}^{\star}}(x_{t_j})$. Then Lemma \ref{Le_aggregrate_gk} illustrates that 
    \begin{equation}
        \begin{aligned}
            &\sum_{l = 0}^{T_j} (1-\beta)\beta^l \left(\frac{1}{T_j}\sum_{k = t_j}^{t_{j+1}-1}  g_{k-l}\right) \in   \sum_{l = 0}^{T_j} (1-\beta)\beta^l \D_{f}^{\tilde{\delta}_{t_j}^{\star}}(x_{t_j}) 
            \subseteq (1 - \beta^{T_j + 1})\conv\left(  \D_{f}^{\tilde{\delta}_{t_j}^{\star}}(x_{t_j}) \right). 
        \end{aligned}
    \end{equation}
    Then based on Lemma \ref{Le_approximate_evaluation} and Lemma \ref{Le_aggregrate_gk}, we can conclude that there exists a non-negative diminishing sequence $\{\tilde{\delta}_k\}$ such that 
    \begin{equation}
        \label{Eq_Prop_aggregrate_mk_0}
        \frac{1}{T_j}\sum_{k = t_j}^{t_{j+1}-1}  \mkp \in \D_{f}^{\tilde{\delta}_{t_j}}(x_{t_j}).
    \end{equation}

    On the other hand, notice that 
    \begin{equation}
        \begin{aligned}
            & \norm{\frac{1}{T_j \eta_{t_j}}\left( \sum_{k = t_j}^{t_{j+1}-1}  \eta_k \mkp \right) - \frac{1}{T_j} \left( \sum_{k = t_j}^{t_{j+1}-1}   \mkp \right)}=\frac{1}{T_j }\norm{\left( \sum_{k = t_j}^{t_{j+1}-1}  \frac{\eta_k}{\eta_{t_j}} \mkp \right) - \left( \sum_{k = t_j}^{t_{j+1}-1}   \mkp \right)}\\
            \leq{}& \frac{1}{T_j} \sum_{k = t_j}^{t_{j+1}-1} \norm{\left(\frac{\eta_k}{\eta_{t_j}} - 1\right) \mkp}  \leq  \frac{1}{T_j } \sup_{k\geq 0} \norm{\mk}  \sum_{k = t_j}^{t_{j+1}-1}  \left|\frac{\eta_k}{\eta_{t_j}} - 1\right|\\
            \leq{}& \sup_{k\geq 0} \norm{\mk} \sup_{t_k \leq k< t_{j+1}} \left|\frac{\eta_k}{\eta_{t_j}} - 1\right|. 
        \end{aligned}
    \end{equation}
    Then it directly follows from Lemma \ref{Le_Defin_tk} and the uniform boundedness of sequence $\{\mk\}$ in Lemma \ref{Le_UB_gk} that 
    \begin{equation}
        \label{Eq_Prop_aggregrate_mk_1}
        \begin{aligned}
            &\lim_{j\to +\infty} \norm{\frac{1}{T_j \eta_{t_j}}\left( \sum_{k = t_j}^{t_{j+1}-1}  \eta_k \mkp \right) - \frac{1}{T_j} \left( \sum_{k = t_j}^{t_{j+1}-1}   \mkp \right)} \\
            \leq{}& \lim_{j\to +\infty} \sup_{k\geq 0} \norm{\mk} \sup_{t_k \leq k< t_{j+1}} \left|\frac{\eta_k}{\eta_{t_j}} - 1\right| =  0. 
        \end{aligned}
    \end{equation}
    As a result, by combining \eqref{Eq_Prop_aggregrate_mk_0} and \eqref{Eq_Prop_aggregrate_mk_1} together, we can conclude that there exists a non-negative diminishing sequence $\{\tilde{\delta}^{\star}_k\}$ such that
    \begin{equation}
         \frac{1}{T_j \eta_{t_j}}\sum_{k = t_j}^{t_{j+1}-1}  \eta_k \mkp \in   \D_f^{\tilde{\delta}^{\star}_{t_j}}(x_{t_{j}}) 
    \end{equation}
    holds for any $k\geq 0$.  This completes the proof. 
\end{proof}

Then by utilizing the concept of interpolated process in Definition \ref{Defin_interpolated_process}, the following proposition illustrates that the sequence $\{x_{t_j}\}$ asymptotically approximates the following differential inclusion 
\begin{equation}
    \label{Eq_DI_SGD}
    \frac{\mathrm{d}x}{\mathrm{d}t} \in -\D_f(x).
\end{equation}
\begin{prop}
    \label{Prop_case2_DI}
    Suppose Assumption \ref{Assumption_f} and Assumption \ref{Assumption_Framework} hold, and $\lim_{k\to +\infty} \eta_k = 0$. Then the interpolated process of $\{x_{t_j}\}$ with respect to the learning rates $\{T_j \eta_{t_j}\}$ is a perturbed solution of the differential inclusion \eqref{Eq_DI_SGD}. Here the sequence $\{t_j\}$ is chosen as in \eqref{Eq_Le_Defin_tk_0}. 
\end{prop}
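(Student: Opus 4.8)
The plan is to recognize the subsequence $\{x_{t_j}\}$ as the iterates of the abstract recursion \eqref{Eq_def_Iter} driven by the set-valued mapping $-\D_f$, so that Lemma \ref{Le_interpolated_process} applies directly. Telescoping the update $\xkp = \xk - \eta_k m_{k+1}$ of \eqref{Eq_Framework} over one block $[t_j, t_{j+1})$ yields
\begin{equation*}
	x_{t_{j+1}} = x_{t_j} + s_j \tilde{d}_j, \qquad s_j := T_j \eta_{t_j}, \qquad \tilde{d}_j := -\frac{1}{T_j \eta_{t_j}} \sum_{k=t_j}^{t_{j+1}-1} \eta_k m_{k+1},
\end{equation*}
which is exactly of the form \eqref{Eq_def_Iter} with stepsizes $\{s_j\}$ and directions $\{\tilde{d}_j\}$. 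Since $-\D_f$ inherits from $\D_f$ the properties of being nonempty, compact, convex-valued, locally bounded and graph-closed, it only remains to check the three numbered hypotheses of Lemma \ref{Le_interpolated_process}; the perturbed-solution conclusion for $\dot{x}\in-\D_f(x)$, i.e.\ \eqref{Eq_DI_SGD}, then follows verbatim.

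Hypothesis (2) is provided by the preceding Proposition \ref{Prop_aggregrate_mk}, which asserts that $-\tilde{d}_j = \frac{1}{T_j \eta_{t_j}}\sum_{k=t_j}^{t_{j+1}-1}\eta_k m_{k+1} \in \D_f^{\tilde{\delta}^{\star}_{t_j}}(x_{t_j})$ for a diminishing sequence $\{\tilde{\delta}^{\star}_k\}$. Invoking the elementary identity $(-\D_f)^{\delta} = -(\D_f^{\delta})$ for $\delta$-expansions, which follows from $\mathrm{dist}(w,-\D_f(z)) = \mathrm{dist}(-w,\D_f(z))$, this is equivalent to $\tilde{d}_j \in (-\D_f)^{\tilde{\delta}^{\star}_{t_j}}(x_{t_j})$ with $\tilde{\delta}^{\star}_{t_j}\to 0$, as required. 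Hypothesis (3) is immediate: $\sup_j \norm{x_{t_j}} < \infty$ by Assumption \ref{Assumption_Framework}(2), while $\sup_j \norm{\tilde{d}_j}<\infty$ follows either from local boundedness of $\D_f$ evaluated at the bounded points $x_{t_j}$, or directly from $\norm{\tilde{d}_j}\le \big(\sup_i \norm{m_i}\big)\,\frac{1}{T_j \eta_{t_j}}\sum_{k=t_j}^{t_{j+1}-1}\eta_k$ together with $\sup_i\norm{m_i}<\infty$ (Lemma \ref{Le_UB_gk}) and the block-ratio control of Lemma \ref{Le_Defin_tk}.

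The crux, and the step I expect to require the most care, is hypothesis (1): $\lim_j s_j = 0$ and $\sum_j s_j = +\infty$. The uniform ratio bound $\sup_{t_j\le k<t_{j+1}}\big|\eta_k/\eta_{t_j}-1\big|\to 0$ of Lemma \ref{Le_Defin_tk} gives the limit-comparison equivalence $s_j\big/\sum_{k=t_j}^{t_{j+1}-1}\eta_k \to 1$; combined with the vanishing block sums of the same lemma this yields $\lim_j s_j = 0$, and reduces the divergence of $\sum_j s_j$ to that of $\sum_{k\ge 0}\eta_k$, since the half-open blocks $[t_j,t_{j+1})$ partition $\{0,1,2,\dots\}$. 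To establish $\sum_{k\ge 0}\eta_k=+\infty$, I would use that $\{\tau_k\}$ and $\{\norm{m_k}\}$ are uniformly bounded (Assumption \ref{Assumption_Framework}(2)--(3) and Lemma \ref{Le_UB_gk}), so the denominator in \eqref{Eq_Framework} grows at most linearly in $k$, giving $\eta_k \ge \mu_0/\sqrt{\varepsilon_0 + Ck}$ for some constant $C>0$; the harmonic-type series on the right diverges. With all three hypotheses verified, Lemma \ref{Le_interpolated_process} yields the claim.
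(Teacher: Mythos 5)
Your proof is correct and follows essentially the same route as the paper: telescoping the update over each block $[t_j,t_{j+1})$, invoking Proposition \ref{Prop_aggregrate_mk} to place the normalized block increment in $\D_f^{\tilde{\delta}^{\star}_{t_j}}(x_{t_j})$, and concluding via the perturbed-solution machinery. The only difference is that you explicitly verify the hypotheses of Lemma \ref{Le_interpolated_process} --- in particular the divergence $\sum_j T_j\eta_{t_j}=+\infty$ via the lower bound $\eta_k \geq \mu_0/\sqrt{\varepsilon_0+Ck}$ --- whereas the paper only checks $\lim_j T_j\eta_{t_j}=0$ and otherwise appeals directly to the definition of a perturbed solution.
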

\begin{proof}
    From the update scheme in \eqref{Eq_Framework}, for any $j > 0$, it holds that, $x_{t_{j+1}} - x_{t_j} = -\sum_{k = t_j}^{t_{j+1}-1}  \eta_k \mkp$.  
    Then together with Proposition \ref{Prop_aggregrate_mk}, we can conclude that there exists a non-negative diminishing sequence $\{\tilde{\delta}^{\star}_k\}$ such that $x_{t_{j+1}} - x_{t_j} \in T_j \eta_{t_j}  \D_f^{\tilde{\delta}^{\star}_{t_j}}(x_{t_{j}})$.  

    Notice that the definition of $\{t_j\}$ illustrates that $\lim_{j\to +\infty} T_j \eta_{t_j} = 0$. 
    Therefore, from the definition of the perturbed solution, we can conclude that the interpolated process of $\{x_{t_j}\}$ with respect to the learning rates $\{T_j \eta_{t_j}\}$ is a perturbed solution of the differential inclusion  \eqref{Eq_DI_SGD}. This completes the proof. 
\end{proof}

The following proposition illustrates that $f$ is a Lyapunov function for the differential inclusion \eqref{Eq_DI_SGD}, which directly follows from the results in \cite[Section 6.1]{bolte2021conservative}. 
\begin{prop}
    \label{Prop_Lyapunov_SGD}
    Suppose Assumption \ref{Assumption_f} holds, then $f$ is a Lyapunov function for the differential inclusion \eqref{Eq_DI_SGD} with the stable set $\{x \in \Rn: 0\in \D_f(x) \}$. 
\end{prop}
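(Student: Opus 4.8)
The plan is to verify directly the two defining properties of a Lyapunov function for the stable set $\Lambda := \{x \in \Rn : 0 \in \D_f(x)\}$: that $f$ is nonincreasing along every solution of \eqref{Eq_DI_SGD}, and that it strictly decreases along any solution whose initial point lies outside $\Lambda$. Both reductions rest on the chain rule for conservative fields established in \cite[Section 6.1]{bolte2021conservative}, which I would invoke as the main analytic input rather than reprove.

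First I would recall the fundamental consequence of path-differentiability: for any absolutely continuous solution $x(\cdot)$ of \eqref{Eq_DI_SGD}, the composition $t \mapsto f(x(t))$ is differentiable at almost every $t$, and its derivative satisfies $\frac{\mathrm{d}}{\mathrm{d}t} f(x(t)) = \inner{v, \dot{x}(t)}$ for \emph{every} $v \in \D_f(x(t))$. The crucial point here, and the step I expect to be the main obstacle, is that this identity holds simultaneously for all elements of the conservative field, not merely for the supremum appearing in Definition \ref{Defin_conservative_field}; this is precisely the content of the conservative-field chain rule, so I would cite it and take it as given.

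Given this, the monotonicity follows at once. Writing $\dot{x}(t) = -v(t)$ with $v(t) \in \D_f(x(t))$, as permitted by the inclusion \eqref{Eq_DI_SGD}, and applying the identity with the particular choice $v = v(t)$, I obtain $\frac{\mathrm{d}}{\mathrm{d}t} f(x(t)) = \inner{v(t), -v(t)} = -\norm{\dot{x}(t)}^2 \leq 0$ for almost every $t$. Integrating yields $f(x(t)) - f(x(0)) = -\int_0^t \norm{\dot{x}(s)}^2 \mathrm{d}s \leq 0$, which is exactly the required nonincrease along trajectories.

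For the strict-decrease property, I would argue by contradiction. Suppose $x(0) \notin \Lambda$ yet $f(x(t)) = f(x(0))$ for some $t > 0$. By the displayed identity this forces $\int_0^t \norm{\dot{x}(s)}^2 \mathrm{d}s = 0$, hence $\dot{x}(s) = 0$ for almost every $s \in [0, t]$; absolute continuity then gives $x(s) \equiv x(0)$ on $[0,t]$. But $\dot{x}(s) \in -\D_f(x(s)) = -\D_f(x(0))$ almost everywhere, so $0 \in \D_f(x(0))$, contradicting $x(0) \notin \Lambda$. Combined with the monotonicity already established, this shows $f(x(t)) < f(x(0))$ for all $t > 0$, completing both conditions and hence the claim.
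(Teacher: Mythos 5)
Your proposal is correct and takes essentially the same route as the paper: the paper gives no argument of its own for this proposition, stating only that it ``directly follows from the results in \cite[Section 6.1]{bolte2021conservative}'', and the content of that reference is precisely the conservative-field chain rule computation you carry out (the identity $\frac{\mathrm{d}}{\mathrm{d}t}f(x(t)) = \inner{v,\dot{x}(t)}$ for every $v \in \D_f(x(t))$ at almost every $t$, yielding $-\norm{\dot{x}(t)}^2$ along trajectories of \eqref{Eq_DI_SGD} and strict decrease off the stable set). Your write-up simply makes explicit what the paper leaves to the citation.
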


\paragraph{Proof for Theorem \ref{The_convergence}}
When $\mathop{\lim\inf}_{k\to +\infty} \eta_k > 0$, Proposition \ref{Prop_case1_etak_bounded} directly shows that any cluster point of sequence $\{\xk\}$ is contained in the subset $\{x \in \Rn: 0 \in \D_f(x)\}$, and the sequence $\{f(\xk)\}$ converges. 

On the other hand, when $\lim_{k\to +\infty} \eta_k = 0$, Proposition \ref{Prop_case2_DI} illustrates that any interpolated process of $\{\xk\}$ is a perturbed solution of the differential inclusion \eqref{Eq_DI_SGD}. Then based on Proposition \ref{Prop_Lyapunov_SGD}, \cite[Proposition 3.33]{benaim2005stochastic} and \cite[Theorem 4.2]{benaim2005stochastic}, we can conclude that any cluster point of sequence $\{\xk\}$ is contained in the subset $\{x \in \Rn: 0 \in \D_f(x)\}$, and the sequence $\{f(\xk)\}$ converges. This completes the proof.

\subsection{Remarks on stability under coercivity condition}

In this subsection, we focus on the following deterministic version of the framework \eqref{Eq_Framework}, 
\begin{equation}
\tag{D-LFGD}
\label{Eq_Framework_determin}
    \left\{
    \begin{aligned}
        &g_k \in \D_f(\xk), \\
        &\eta_k = \frac{\mu_k}{\sqrt{\varepsilon_0 + \sum_{i = 1}^{k} \tau_i\norm{m_i}^2}},\\
        &\mkp = \beta \mk + (1-\beta) g_k,\\
        &\xkp = \xk - \eta_k \mkp, 
    \end{aligned}
    \right.
\end{equation}
and we aim to investigate a sufficient condition to guarantee the uniform boundedness of the sequence $\{\xk\}$  in \eqref{Eq_Framework_determin}. Although some existing works \cite{bianchi2022convergence,josz2023global,josz2023lyapunov} have established the uniform boundedness of the sequence generated by some particular stochastic subgradient methods, their analyses are restricted to those methods where the stepsizes are fixed as a constant throughout the iterations. As a result, their analyses cannot be directly applied to analyze the framework \eqref{Eq_Framework_determin}. 

Motivated by the techniques developed in \cite{josz2023global,josz2023lyapunov}, we aim to prove that the sequence $\{\xk\}$ generated by \eqref{Eq_Framework_determin} is uniformly bounded under mild conditions. 
Given a locally bounded set-valued mapping $\D: \Rn \rightrightarrows \Rn$ and the sequence of stepsizes $\{\eta_k\}$,  we first consider the following discrete-time iteration:
\begin{equation}
    \label{Eq_stability_iterates}
    \xkp \in \xk - \eta_k \conv\left(\D^{\delta_k}(\xk) \right), 
\end{equation}
and its corresponding differential inclusion
\begin{equation}
    \label{Eq_stability_DI}
    \frac{\mathrm{d}x}{\mathrm{d}t} \in \D(x). 
\end{equation}
Let $\hat{x}(t)$ be the interpolated process of the sequence $\{\xk\}$ generated by \eqref{Eq_stability_iterates} with respect to the sequence of stepsizes $\{ \eta_k\}$. Moreover, let  $\lambda: \bb{N}_+ \to \bb{R}_+, \Lambda: \bb{R}_+ \to \bb{N}_+$ be defined by $\lambda(0) := 0$, $\lambda(i) := \sum_{k = 0}^{i-1} \eta_k$, and $\Lambda(t) := \sup  \{k \geq 0: t\geq \lambda(k)\} $.

In the following, we present Lemma \ref{Le_stability_close_DI_iter} as an extension to \cite[Lemma 1]{josz2023lyapunov},  showing the relationship between the discrete iteration sequence \eqref{Eq_stability_iterates} and the trajectories of the continuous-time differential inclusion \eqref{Eq_stability_DI}. The proof of Lemma \ref{Le_stability_close_DI_iter} directly follows the same proof techniques as in \cite[Lemma 1]{josz2023lyapunov}, hence is omitted for simplicity. 
\begin{lem}
    \label{Le_stability_close_DI_iter}
    Let $\D: \Rn \rightrightarrows \Rn$ be a graph-closed locally bounded convex-valued set-valued mapping. Let $X_0$ be a compact subset of $\Rn$ and let $T>0$ be a prefixed constant. Suppose that there exist constants $\bar{\alpha}, r > 0$ such that for any $\alpha \in (0, \bar{\alpha}]$ and for any sequence $\{\xk\}$ generated by \eqref{Eq_stability_iterates} with $x_0 \in X_0$ and $\sup_{k\geq 0}(\eta_k + \delta_k) \leq \alpha$, we have that 
    $\max\{\norm{\xk}: k\leq \Lambda(T)\} \leq r$. 
    
    Then for any $\varepsilon > 0$, there exists $\hat{\alpha}\in (0, \bar{\alpha}]$ such that for all $\alpha \leq \hat{\alpha}$ and for any sequence $\{\xk\}$ generated by \eqref{Eq_stability_iterates} with $x_0 \in X_0$ and $\sup_{k\geq 0}\eta_k \leq \alpha$, there exists a trajectory $\bar{x}$ of the differential inclusion \eqref{Eq_stability_DI} with $\bar{x}(0) \in X_0$, such that 
    \begin{equation}
        \sup_{0\leq t\leq T}\norm{\bar{x}(t) - \tilde{x}(t)} \leq \varepsilon.
    \end{equation}
    Here $\tilde{x}$ is an interpolated process of sequence $\{\xk\}$ with respect to the sequence of stepsizes $\{\eta_k\}$. 
\end{lem}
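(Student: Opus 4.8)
The plan is to prove the conclusion by a compactness (Arzel\`a--Ascoli) argument run by contradiction, which is the standard route for closeness results between a discretization and its limiting differential inclusion. Fix $\varepsilon>0$ and suppose no admissible $\hat\alpha$ exists. Negating the conclusion yields a sequence $\alpha_n\downarrow 0$ together with, for each $n$, a sequence $\{x^{(n)}_k\}$ generated by \eqref{Eq_stability_iterates} with $x^{(n)}_0\in X_0$ and step and perturbation parameters bounded by $\alpha_n$ (i.e.\ $\sup_{k\geq 0}(\eta_k+\delta_k)\leq\alpha_n$, which is what lets us invoke the boundedness hypothesis), whose interpolated process $\tilde{x}^{(n)}$ satisfies $\sup_{0\leq t\leq T}\norm{\bar{x}(t)-\tilde{x}^{(n)}(t)}>\varepsilon$ for \emph{every} trajectory $\bar{x}$ of \eqref{Eq_stability_DI} with $\bar{x}(0)\in X_0$. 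The goal is to contradict this by producing, along a subsequence, exactly such a trajectory that $\tilde{x}^{(n)}$ approaches.

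First I would localize. For every $n$ with $\alpha_n\leq\bar\alpha$ the standing hypothesis gives $\max\{\norm{x^{(n)}_k}:k\leq\Lambda_n(T)\}\leq r$, where $\Lambda_n$ is the index function associated with the $n$-th step-size sequence, so all iterates contributing to $\tilde{x}^{(n)}$ on $[0,T]$ lie in the compact ball $\bb{B}_{r}(0)$. Since $\D$ is locally bounded, there is a constant $M$, depending only on $r$ and a fixed expansion radius, with $\norm{v}\leq M$ for every $v\in\conv(\D^{\delta_k}(x^{(n)}_k))$ once $\alpha_n$ is small; hence each increment satisfies $\norm{x^{(n)}_{k+1}-x^{(n)}_k}\leq M\eta_k$. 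Consequently every $\tilde{x}^{(n)}$ is $M$-Lipschitz on $[0,T]$, so the family $\{\tilde{x}^{(n)}\}$ is uniformly bounded and equicontinuous.

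Next I would apply Arzel\`a--Ascoli to extract a subsequence along which $\tilde{x}^{(n)}\to\bar{x}$ uniformly on $[0,T]$; the limit $\bar{x}$ is $M$-Lipschitz, hence absolutely continuous, and $\bar{x}(0)\in X_0$ by compactness of $X_0$. The crux is to show $\bar{x}$ solves \eqref{Eq_stability_DI}. The standard device expresses $\dot{\bar{x}}(t)$, for almost every $t$, as a limit of short time-averages $\tfrac{1}{h}\big(\tilde{x}^{(n)}(t+h)-\tilde{x}^{(n)}(t)\big)$; each such average is a convex combination of increments $-\eta_k v^{(n)}_k$ normalized in time, with $v^{(n)}_k\in\conv(\D^{\delta_k}(x^{(n)}_k))$, $x^{(n)}_k$ within $O(Mh+\alpha_n)$ of $\bar{x}(t)$, and $\delta_k\leq\alpha_n$. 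Letting $n\to\infty$ and then $h\downarrow 0$, and invoking the convex-valuedness together with the graph-closedness (upper semicontinuity) of $\D$ and the vanishing of the expansion radii, one recovers the limiting velocity as an element of the right-hand side of \eqref{Eq_stability_DI} (up to the sign convention linking \eqref{Eq_stability_iterates} and \eqref{Eq_stability_DI}). This exhibits $\bar{x}$ as a trajectory of \eqref{Eq_stability_DI} started in $X_0$ with $\sup_{0\leq t\leq T}\norm{\bar{x}-\tilde{x}^{(n)}}\to 0$ along the subsequence, contradicting the choice of the $\tilde{x}^{(n)}$ and proving the lemma.

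I expect the main obstacle to be this last step: recovering the genuine inclusion $\dot{\bar{x}}(t)\in\D(\bar{x}(t))$ from perturbed, convexified, discrete velocities. The convex-valuedness of $\D$ is indispensable for absorbing the averaging over consecutive steps, and the uniform vanishing of the perturbations $\delta_k$ (controlled through $\alpha_n$) is precisely what allows the limit to land inside $\D$ itself rather than in a strict expansion $\D^{\delta}$; handling both simultaneously while keeping the trajectory inside the compact set furnished by the boundedness hypothesis is the delicate part. The remaining ingredients are routine, and indeed Lemma \ref{Le_interpolated_process} already encodes the interpolated-process/perturbed-solution correspondence in the same spirit.
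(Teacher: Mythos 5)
The paper omits the proof of this lemma, deferring to the argument of \cite[Lemma 1]{josz2023lyapunov}, which is precisely the contradiction-plus-Arzel\`a--Ascoli scheme you describe: localize the iterates via the boundedness hypothesis, deduce a uniform Lipschitz bound from the local boundedness of $\D$, extract a uniformly convergent subsequence of interpolated processes, and identify the limit as a trajectory of the differential inclusion using the convex-valuedness and graph-closedness of $\D$ together with the vanishing expansion radii. Your proposal is therefore correct and essentially coincides with the intended proof; the sign discrepancy you flag between \eqref{Eq_stability_iterates} and \eqref{Eq_stability_DI} is an inconsistency in the paper's statement (the differential inclusion should read $\dot{x}\in-\D(x)$, consistent with \eqref{Eq_DI_SGD}), not a gap in your argument.
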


Now we make the following assumptions on the definability and coercivity of $f$. 
\begin{assumpt}
    \label{Assumption_f_definable}
    \begin{enumerate}
        \item $f$ is a locally Lipschitz continuous definable function that admits $\tilde{\D}_f$ as its definable conservative field.  
        \item $f$ is coercive over $\Rn$. That is, for any sequence $\{\yk\} $ such that $\lim_{k\to +\infty}\norm{\yk} = +\infty$, it holds that $\lim_{k\to +\infty} f(\yk) = +\infty$. 
        \item The sequences $\{\tau_k\}$ and $\{\mu_k\}$ in \eqref{Eq_Framework} are positive, non-decreasing. Moreover, there exists a locally bounded function $\psi: \Rn \to \bb{R}_+$ such that $\max\{\tau_k , \mu_k \} \leq \sup_{0\leq i\leq k}\psi(x_i)$ holds for any $k\geq 0$. 
    \end{enumerate}
\end{assumpt}

Here are some remarks on Assumption \ref{Assumption_f_definable}. As demonstrated in \cite{bolte2021conservative,castera2021inertial,josz2023global,josz2023lyapunov}, the assumptions on definibility of $f$ and $\D_f$ is mild in practice. Moreover, Assumption \ref{Assumption_f_definable}(2) assumes the coercivity of the objective function, which is also mild in various deep learning tasks, especially when quadratic regularization (i.e., weight decay) is employed. Furthermore, Assumption \ref{Assumption_f_definable}(3) characterizes how we construct the sequences $\{\tau_k\}$ and $\{\mu_k\}$, which align with the update schemes of DoG and DoWG. 

Based on Assumption \ref{Assumption_f_definable}, in this subsection, we can choose $\D_f$ as 
\begin{equation}
    \D_f(x) = \conv(\tilde{\D}_f(x)). 
\end{equation}
Then it straghtforwardly follows from Proposition \ref{Prop_definable_regularity} that the set $\{f(x) : 0 \in \D_f(x)\}$ is finite.

The following proposition illustrates that the sequence $\{ \xk\}$  approximates the subgradient descent method with a controlled approximation error. 
\begin{prop}
    \label{Prop_stable_mk_close}
    Suppose Assumption \ref{Assumption_f_definable} holds, and $\sup_{0\leq i\leq (k+1)q} \norm{\D_f(x_i)} + \norm{m_0} \leq M_f$. Then for any $k\geq 0$ and any $q \in \bb{N}_+$, it holds that 
    \begin{equation}
        \mkp \in \conv\left(\D_f^{\delta_{q,k}}(\xk)\right). 
    \end{equation}
    Here $\delta_{q,k} =  \beta^{q+1}M_f +  \frac{M_f}{\sqrt{\varepsilon_0}} \sum_{i = k-q}^{k-1}  \mu_i$. 
\end{prop}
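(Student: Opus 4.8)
The plan is to unroll the heavy-ball recursion for $q$ steps so that $\mkp$ is expressed as a convex combination of the recent subgradients $g_{k-q},\dots,g_k$ together with a geometrically decaying remainder, and then to certify that each $g_l$ is an inexact evaluation of $\D_f$ at $\xk$. First I would iterate $\mkp=\beta\mk+(1-\beta)g_k$ exactly as in the derivation of \eqref{Eq_Prop_aggregrate_mk_2} to obtain
\begin{equation*}
\mkp=\beta^{q+1}m_{k-q}+\sum_{l=k-q}^{k}(1-\beta)\beta^{k-l}g_l,
\end{equation*}
noting that the coefficients are nonnegative and sum to one, so this is a genuine convex combination of $m_{k-q}$ and $g_{k-q},\dots,g_k$.

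Next I would establish the uniform bounds needed to control the window. From $\norm{m_0}\le M_f$ and $\norm{g_i}\le\norm{\D_f(x_i)}\le M_f$ for $i\le(k+1)q$, a one-line induction on the recursion yields $\norm{m_j}\le M_f$ for every index $j$ in play. Since the learning rate in \eqref{Eq_Framework_determin} satisfies $\eta_i=\mu_i/\sqrt{\varepsilon_0+\sum_j\tau_j\norm{m_j}^2}\le \mu_i/\sqrt{\varepsilon_0}$, this gives the per-step displacement bound $\eta_i\norm{m_{i+1}}\le \mu_i M_f/\sqrt{\varepsilon_0}$. Telescoping $x_k-x_l=-\sum_{i=l}^{k-1}\eta_i m_{i+1}$ over any $l\in\{k-q,\dots,k\}$ then produces
\begin{equation*}
\norm{x_k-x_l}\le\sum_{i=l}^{k-1}\eta_i\norm{m_{i+1}}\le\frac{M_f}{\sqrt{\varepsilon_0}}\sum_{i=k-q}^{k-1}\mu_i=:R,
\end{equation*}
so that $x_l\in\bb{B}_R(\xk)$; since $g_l\in\D_f(x_l)$, Definition \ref{Defin_delta_expansion} immediately gives $g_l\in\D_f^{R}(\xk)$ for every $l$ in the window.

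Finally I would absorb the remainder into the expansion radius. Setting $\bar g:=(1-\beta^{q+1})^{-1}\sum_{l=k-q}^{k}(1-\beta)\beta^{k-l}g_l\in\conv(\D_f^{R}(\xk))$, the unrolling reads $\mkp=\bar g+\beta^{q+1}(m_{k-q}-\bar g)$, whose second term is a perturbation controlled by $\beta^{q+1}M_f$ through the uniform bound $\norm{m_j}\le M_f$. Combining this with the elementary inclusion $\conv(\D_f^{R}(\xk))+\bb{B}_s(0)\subseteq\conv(\D_f^{R+s}(\xk))$ — which follows directly from Definition \ref{Defin_delta_expansion}, since $\D_f^{R}(\xk)+\bb{B}_s(0)\subseteq\D_f^{R+s}(\xk)$ and the convex hull commutes with the Minkowski sum by a ball — and taking $s=\beta^{q+1}M_f$ yields $\mkp\in\conv(\D_f^{\delta_{q,k}}(\xk))$ with $\delta_{q,k}=\beta^{q+1}M_f+R$.

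I expect the delicate point to be this last step: the tail $\beta^{q+1}m_{k-q}$ obstructs writing $\mkp$ as an exact convex combination of subgradients evaluated near $\xk$, so the bookkeeping that matches this decaying remainder against the enlargement of the expansion radius (and the verification of the convex-hull/Minkowski inclusion) is what must be handled with care. The remaining steps are routine once the uniform bound $\norm{m_j}\le M_f$ and the displacement estimate are in place.
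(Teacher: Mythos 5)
Your argument is essentially the paper's: unroll the heavy-ball recursion over a window of length $q+1$, use $\eta_i \le \mu_i/\sqrt{\varepsilon_0}$ and the inductive bound $\norm{m_j}\le M_f$ to control the displacement $\norm{x_l-\xk}$ by $R=\frac{M_f}{\sqrt{\varepsilon_0}}\sum_{i=k-q}^{k-1}\mu_i$ (so each $g_l\in\D_f^{R}(\xk)$), and then absorb the tail $\beta^{q+1}m_{k-q}$ into the expansion radius via the Minkowski inclusion. The one quantitative slip — which the paper's own proof shares — is in the final step: the perturbation $\beta^{q+1}(m_{k-q}-\bar g)$ is only bounded by $\beta^{q+1}\left(\norm{m_{k-q}}+\norm{\bar g}\right)\le 2\beta^{q+1}M_f$, not $\beta^{q+1}M_f$, so the argument as written yields $\delta_{q,k}=2\beta^{q+1}M_f+R$; this doubling is harmless downstream, since Theorem \ref{The_stable_corecive} only requires $\beta^{q+1}$ to be sufficiently small.
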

\begin{proof}
    Notice that $\sup_{0\leq i\leq (k+1)q} \norm{g_i} + \norm{m_0} \leq M_f$ implies that $\sup_{0\leq i\leq k}\norm{\mkp} \leq M_f$. Therefore, for any $k, q \in \bb{N}_+$ and any $i \in [k-q,k]$, it directly follows from the update scheme in \eqref{Eq_Framework_determin} that 
    \begin{equation}
        \begin{aligned}
            \norm{x_i - \xk} \leq \sum_{j = i}^{k-1} \norm{\eta_j m_{j+1}} \leq \sum_{j = i}^{k-1} \eta_j M_f \leq  \sum_{j = i}^{k-1} \frac{ \mu_j}{\sqrt{\varepsilon_0}} M_f \leq \sum_{j = k-q}^{k-1} \frac{ \mu_j}{\sqrt{\varepsilon_0}} M_f.
        \end{aligned}
    \end{equation}

    On the other hand, it follows from the update scheme of the sequence $\{\mk\}$ in \eqref{Eq_Framework_determin} that
    \begin{equation}
        \begin{aligned}
            \mkp ={}& \beta^{q+1} m_{k-q} + (1-\beta) \sum_{j = k-q}^k \beta^{(k-j)} g_j\\
            \in{}&  \beta^{q+1} m_{k-q} + (1-\beta) \sum_{j = k-q}^k \beta^{(k-j)} \D_f^{\delta_{k}}(\xk),
        \end{aligned}
    \end{equation}
    where $\delta_k = \sum_{j = k-q}^{k-1} \frac{ \mu_j}{\sqrt{\varepsilon_0}} M_f$. Notice that $\sup_{0\leq i\leq (k+1)q} \norm{\D_f(x_i)}  \leq M_f$, it holds that  $\sup_{0\leq i\leq k} \norm{m_i} \leq M_f$. As a result,  we can conclude that 
    \begin{equation}
        \mkp \in \conv \left(\D_f^{\delta_{q,k}}(\xk) \right),
    \end{equation}
    with $\delta_{q,k} =  \beta^{q+1}M_f +  \frac{M_f}{\sqrt{\varepsilon_0}} \sum_{i = k-q}^{k-1}  \mu_i$.  
\end{proof}

Denote $\ca{L}_{r}:= \{x \in \Rn: f(x) \leq r\}$ as the level set for $f$ in \eqref{Prob_Ori}. Then, from the coercivity of $f$ assumed in Assumption \ref{Assumption_f_definable}, we can conclude that $\ca{L}_{r}$ is a closed compact subset of $\Rn$ for any $r > 0$. 

The following theorem illustrates that the global stability of the framework \eqref{Assumption_f_definable}, in the sense that the sequence $\{\xk\}$ generated by \eqref{Eq_Framework_determin} is uniformly bounded under Assumption \ref{Assumption_f_definable} with a sufficiently small scaling parameter $\{\mu_k\}$. It is worth mentioning that although the global stability of momentum SGD with fixed stepsizes is analyzed in \cite{josz2023global}, their implementations of momentum SGD are different from the momentum SGD implemented in PyTorch and TensorFlow. Hence, their techniques cannot be applied to explain the global stability of \eqref{Eq_Framework_determin}.

\begin{theo}
    \label{The_stable_corecive}
    Suppose Assumption \ref{Assumption_f_definable} holds.  Then for any compact subset $X_0 \subset \Rn$, there exist constants $\alpha > 0$ and $\tilde{r} > 0$ such that whenever $\sup_{k\geq 0} \mu_k  \leq \alpha$, the sequence $\{\xk\}$ generated by \eqref{Eq_Framework_determin} with $x_0 \in X_0$ is restricted in $\ca{L}_{\tilde{r}}$. 
\end{theo}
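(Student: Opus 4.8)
\emph{The plan is} to reduce the claim to trapping the iterates inside a single sublevel set, and then to exploit the finiteness of the critical values together with the Lyapunov decrease along the limiting dynamics. Since $f$ is coercive (Assumption \ref{Assumption_f_definable}(2)), every sublevel set $\ca{L}_r$ is compact, so it suffices to produce $\tilde r$ with $\{\xk\}\subseteq \ca{L}_{\tilde r}$. Set $r_0:=\max_{x\in X_0} f(x)$, which is finite since $X_0$ is compact. By Proposition \ref{Prop_definable_regularity} the set of critical values $\{f(x): 0\in\D_f(x)\}$ is finite, so I can pick levels $r_0<a<b$ with $[a,b]$ containing no critical value and set $\tilde r:=b$. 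Two quantities drive the argument: the corridor width $\ell:=\mathrm{dist}(\ca{L}_a,\{x:f(x)\ge b\})>0$ (a compact set and a disjoint closed set are at positive distance), and the gradient floor $c:=\min\{\mathrm{dist}(0,\D_f(x)): a\le f(x)\le b\}>0$, which is positive because $0\notin\D_f(x)$ throughout the compact corridor and $\D_f$ is graph-closed and locally bounded.

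\emph{Next I would} recast \eqref{Eq_Framework_determin} as a perturbed subgradient iteration. Fixing $M_f$ as a bound for $\norm{\D_f}+\norm{m_0}$ over the enlarged level set, Proposition \ref{Prop_stable_mk_close} gives $\mkp\in\conv(\D_f^{\delta_{q,k}}(\xk))$ with $\delta_{q,k}=\beta^{q+1}M_f+\frac{M_f}{\sqrt{\varepsilon_0}}\sum_{i=k-q}^{k-1}\mu_i\le \beta^{q+1}M_f+\frac{M_f}{\sqrt{\varepsilon_0}}\,q\alpha$ whenever $\sup_{k}\mu_k\le\alpha$. Thus the iterates have exactly the form \eqref{Eq_stability_iterates} with $\D=\D_f$, and by first choosing the lookback $q$ large enough to make $\beta^{q+1}M_f$ small and then $\alpha$ small enough to make $\frac{M_f}{\sqrt{\varepsilon_0}}q\alpha$ small, I can force $\delta_{q,k}\le\delta^\star$ uniformly for all $k\ge q$; the finitely many initial iterates travel only a distance of order $q\alpha$ away from $X_0$ and cause no trouble. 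Simultaneously $\norm{\xkp-\xk}\le\frac{\alpha}{\sqrt{\varepsilon_0}}M_f$ is small.

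\emph{Then} I would invoke Lemma \ref{Le_stability_close_DI_iter} (with the sign convention matching the descent dynamics \eqref{Eq_DI_SGD}) to approximate, over a fixed horizon $T$ and within any prescribed $\varepsilon$, the interpolated process of $\{\xk\}$ by a genuine trajectory $\bar x$ of $\dot x\in-\D_f(x)$; the a priori bound the lemma requires holds trivially because over time $T$ the iterates move at most about $M_f T$. Since $f$ is a Lyapunov function for \eqref{Eq_DI_SGD} (Proposition \ref{Prop_Lyapunov_SGD}), $f\circ\bar x$ is nonincreasing and in fact decreases at rate at least $c^2$ while $\bar x$ remains in a neighborhood of the corridor. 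The trapping argument then runs by contradiction: if some iterate had $f>b$, let $u$ be the first such index and $s<u$ the last index with $f(x_s)\le a$; on $[s,u]$ the iterates lie in the corridor yet $f$ increases by at least $b-a$. Transferring the $\varepsilon$-closeness to function values through the Lipschitz constant $L$ of $f$ on $\ca{L}_{b}$, the change of $f$ over each corridor window of length $T$ is at most $-c^2 T+2L\varepsilon<0$ once $\varepsilon<c^2 T/(4L)$, so $f$ must strictly decrease along the segment, contradicting the net increase $b-a$. Hence no iterate exceeds $b$ and $\{\xk\}\subseteq\ca{L}_{\tilde r}$.

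\emph{The main obstacle} is the uniform control of the momentum lag: the perturbation $\delta_{q,k}$ must be made small \emph{simultaneously} for all $k$, even though the lookback $q$ cannot exceed $k$ and the naive bound $\sum_{i}\mu_i$ grows with $k$; the resolution is the fixed-lookback estimate above combined with separate handling of the $O(q)$ initial iterates. The second delicate point is closing the trapping over a corridor crossing that may take arbitrarily long ``time'', which is why I rely on the \emph{strict} decrease rate $c^2$ guaranteed by the critical-value-free corridor rather than on mere monotonicity of $f\circ\bar x$, and on carefully matching the descent sign convention between \eqref{Eq_stability_iterates}, \eqref{Eq_stability_DI} and the Lyapunov dynamics \eqref{Eq_DI_SGD}.
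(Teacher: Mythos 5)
Your proposal is correct and follows essentially the same route as the paper's proof: both recast the momentum iteration as a perturbed subgradient step via Proposition \ref{Prop_stable_mk_close} (large lookback $q$, then small $\alpha$), invoke the shadowing Lemma \ref{Le_stability_close_DI_iter} over a fixed horizon $T$, and exploit the finiteness of the critical values of $f$ to obtain a uniform quadratic descent rate on a critical-value-free band, which traps the iterates. The only difference is bookkeeping --- you close the trapping with a first-exit contradiction across a corridor $[a,b]$, whereas the paper uses nested level sets $\ca{L}_{r_0}\subset\ca{L}_{2r_0}\subset\ca{L}_{3r_0}\subset\ca{L}_{4r_0}$ and a recurrence argument --- and you correctly identify the two delicate points (uniform control of $\delta_{q,k}$ and the need for strict, not merely monotone, decrease to beat the per-window transfer error).
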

\begin{proof}

    From the definability of $f$ and $\D_f$, we can conclude that the set $\{f(x): 0 \in \D_f(x) \}$ is finite. Let $r_0$ be a positive constant such that  
    \begin{equation}
        r_0 >  \sup_{x \in X_0} f(x), \quad r_0> \max \{f(x):  0 \in \D_f(x) \}, 
    \end{equation}
    and  denote $M_f:= \sup_{x \in \ca{L}_{4r_0}} \norm{\D_f(x)}$. From the local boundedness of $\D_f$, for any compact subset $\hat{X}_0 \subseteq \ca{L}_{3 r_0}$, any positive sequences $\{\eta_k\}$ and $\{\delta_k\}$, and any sequence $\{z_k\}$ satisfying the update scheme,
    \begin{equation}
        \zkp \in \zk - \eta_k \D_f^{\delta_k}(\zk),\quad z_0 \in \hat{X}_0,
    \end{equation} 
    there exist $T \in (0, \frac{r_0}{4(M_f +1)^2})$ and $\alpha_z > 0$ such that whenever $\sup_{k\geq 0} \eta_k + \delta_k \leq \alpha_z$,  we have $\{z_k: \lambda(k) \leq T\} \subset \ca{L}_{4r_0}$. Then, with that particular choice of $T$, we  choose $\iota:= \inf_{x \in \ca{L}_{4r_0} \setminus \ca{L}_{r_0}} \inf\{\norm{d}^2: d \in \D_f (x)\}$, and set 
    \begin{equation}
        \varepsilon :=  \min\left\{\frac{\min\{1, \iota T \}}{2M_f}, \frac{\mathrm{dist}(\ca{L}_{r_0}, \ca{L}_{2r_0}\setminus \ca{L}_{r_0})}{4M_f}  \right\}. 
    \end{equation} 
     Then from Lemma \ref{Le_stability_close_DI_iter}, there exists $\alpha_z > 0$ such that for any sequence $\{z_k\}$ satisfying the update scheme, $\zkp \in \zk - \sigma_k \D_f^{\delta_k}(\zk)$ 
    with $z_0 \in \ca{L}_{2r_0}$ and $\sup_{k\geq 0}(\sigma_k + \delta_k) \leq \alpha_z$, there exists a trajectory $\bar{z}$ of the differential inclusion \eqref{Eq_DI_SGD} 
    with $\bar{z}(0) \in \ca{L}_{2r_0}$, such that $\sup_{0\leq t\leq T}\norm{\bar{z}(t) - \hat{z}(t)} \leq \frac{1}{2}\varepsilon$. 
    Here $\hat{z}$ is an interpolated process of the sequence $\{\zk\}$ with respect to the stepsizes $\{\sigma_k\}$. 
 
    Now choose $q> \frac{4|\log(\alpha_z M_f^{-1})|}{|\log(\beta)|}$ and $\alpha < \frac{\alpha_z\sqrt{\varepsilon_0}}{4qM_f}$. From Proposition \ref{Prop_stable_mk_close}, there exists a nonnegative sequence $\{\delta_{q,k}\}$ such that the update scheme of $\{ \xk\}$ can be described by $\xkp \in  \xk - \eta_k \D_f^{\delta_{q,k}}( \xk)$ 
    with $\delta_{q,k}+ \eta_k \leq \alpha_z$ holds for any $k\geq 0$. 

    From Lemma \ref{Le_stability_close_DI_iter}, for any $i \geq 0$ and any  $j\leq \Lambda(\lambda(i)+ T)$, when $ x_i \in \ca{L}_{2r_0}$, there exists a trajectory $\bar{z}$ of the differential inclusion  \eqref{Eq_DI_SGD}  such that $\bar{z}(0) \in \ca{L}_{2r_0}$ and $\sup_{ 0\leq t\leq T} \norm{\bar{z}(t) - \hat{x}(t)} \leq \varepsilon$. 
    Here $\hat{x}$ is an interpolated process of $\{ \xk: k\geq i\}$ with respect to $\{\eta_k\}$. 
    Then we have
    \begin{equation}
        \begin{aligned}
            &f( x_j) \leq f(\bar{z}(\lambda(j) - \lambda(i) ))  + M_f \varepsilon = f(\bar{z}(0)) +M_f \varepsilon +  \int_{0}^{\lambda(j) - \lambda(i)} -\mathrm{dist}(0, \D_f(\bar{z}(s)))^2 \mathrm{d}s \\
            \leq{}& f(\bar{z}(0)) +M_f \varepsilon \leq f( x_i) + 2M_f \varepsilon \leq 2r_0 + 2 M_f \varepsilon \leq 3r_0. 
        \end{aligned}
    \end{equation}
    This illustrates that $\{ \xk: i\leq k\leq \Lambda(\lambda(i)+ T) \} \subset \ca{L}_{3r_0}$.

    On the other hand,  when $ x_i \in \ca{L}_{3r_0} \setminus \ca{L}_{2r_0}$, from the choice of $T$ we can conclude that $\{ x_j: i \leq j \leq \Lambda(\lambda(i)+ T)\} \cap  \ca{L}_{r_0} = \emptyset$. Then by using Lemma \ref{Le_stability_close_DI_iter} again, there exists a trajectory $\bar{z}$ such that $\bar{z}(0) \in \ca{L}_{3r_0}$ and  $\sup_{0\leq t\leq T} \norm{\bar{z}(t) - \hat{x}(t)} \leq \varepsilon$. Here $\hat{x}$ is an interpolated process of $\{ \xk: k\geq i\}$.

    Since $ x_i \in \ca{L}_{3r_0} \setminus \ca{L}_{2r_0} $, then we can conclude that $\mathrm{dist}(\bar{z}(0), \ca{L}_{r_0}) \geq T M_f$, hence $\{\bar{z}(t): 0\leq t\leq T\} \cap \ca{L}_{r_0} = \emptyset$. As a result, we have
    \begin{equation}
        \begin{aligned}
        &f( x_{\Lambda(\lambda(i)+ T)}) \leq f(\bar{z}(T)) + \varepsilon M_f = f(\bar{z}(0)) +\varepsilon M_f + \int_{0}^{T} -\mathrm{dist}(0, \D_f(\bar{z}(s)))^2 \mathrm{d}s\\
        \leq{}& f(\bar{z}(0)) - T \iota + \varepsilon M_f \leq f(\bar{z}(0)) - \varepsilon M_f \leq 3r_0 - \varepsilon M_f. 
        \end{aligned}
    \end{equation}

    Therefore, we can conclude that there exists a strictly increasing sequence $\{k_l\} \subseteq \bb{N}_+$ such that $\{ x_{k_l}\} \in \ca{L}_{3r_0}$, and $k_{l+1} - k_l \leq \Lambda(\lambda(k_l) + T)$. Notice that 
    \begin{equation}
        \sup_{k\geq 0} \norm{ \xk} \leq \sup_{l\geq 0} \norm{ x_{k_l}} + T M_f < +\infty,
    \end{equation}
    we can conclude that when $ x_{0} \in X_0$, the sequence $ \sup_{k\geq 0}  f(\xk ) \leq 3r_0 + T M_f^2 < 4r_0  $, hence sequence $\{\xk\}$ is uniformly bounded and restricted in $\ca{L}_{4r_0}$. As a result, let $\tilde{r} = 4r_0$, we complete the proof. 
\end{proof}

\section{Applications}

This section demonstrates the practical implementation of our theoretical analysis on the framework \eqref{Eq_Framework}. In Section 4,1, we show that the DoG method \cite{ivgi2023dog} fits into the framework \eqref{Eq_Framework}, hence enjoys the convergence guarantees in training nonsmooth neural networks with random reshuffling. Moreover, we propose a novel learning-rate-free momentum SGD, and show its convergence properties based on the results in Section 4.2.

\subsection{Convergence guarantees for DoG}
In this subsection, we show that our proposed framework \eqref{Eq_Framework} encompasses the DoG with random reshuffling for solving \eqref{Prob_Ori}. Following the update schemes in \cite{ivgi2023dog}, we present the detailed algorithm in Algorithm \ref{Alg:DOG}. 

\begin{algorithm}[htbp]
	\begin{algorithmic}[1]  
		\Require Initial point $x_0 \in \Rn$, parameters  $\varepsilon_0 > 0$, $\rho > 0$.
		\For{$i = 0,1,2,...$}
                \State Sample $\{\pi_{i,0},..., \pi_{i,N-1}\}$ as random permutation of $[N]$. 
                \For{$j = 0,..., N-1$}
        		\State Compute $g_{i,j} \in \D_{f_{\pi_{i,j}}}(x_{i,j})$;
        		\State Compute $\mu_{i,j} = \rho \cdot \max_{i^* \leq i,~ j^*\leq j}\{\norm{x_{i^*,j^*} - x_0}\}$;
                    \State Compute the learning rate $\eta_{i,j} =  \frac{\mu_{i,j}}{ \sqrt{\varepsilon_0 +\sum_{i^* \leq i,~ j^*\leq j}\norm{g_{i^*,j^*}}^2 }}$;
        		\State Update the primal variables $x$ by $x_{i, j+1} = x_{i, j} - \eta_{i,j} g_{i,j}$;
                \EndFor
                \State $x_{i+1,0} = x_{i,N}$. 
		\EndFor
	\end{algorithmic}  
	\caption{The Distance over Gradients method (DoG) with random reshuffling.}  
	\label{Alg:DOG}
\end{algorithm}

Then based on Theorem \ref{The_convergence}, we present the convergence properties of Algorithm \ref{Alg:DOG} in the following theorem. The proof of Theorem \ref{Theo_convergence_DoG} directly follows from Theorem \ref{The_convergence} by choosing $\mu_k = \sup_{i\leq k} \norm{x_i - x_0}$ and $\tau_k = 1$ in \eqref{Eq_Framework}, hence is omitted for simplicity. 
\begin{theo}
    \label{Theo_convergence_DoG}
    Suppose Assumption \ref{Assumption_f} holds, and there exists a constant $M> 0$ such that $\sup_{k\geq 0} \norm{\xk} \leq M$ holds almost surely. Then almost surely, any cluster point of $\{\xk\}$ generated by Algorithm \ref{Alg:DOG} lies in the subset $\{x \in \Rn: 0 \in \D_f(x)\}$, and the sequence $\{f(\xk)\}$ converges.  
\end{theo}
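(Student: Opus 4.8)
The plan is to verify that Algorithm~\ref{Alg:DOG} is a special case of the general framework \eqref{Eq_Framework}, so that the conclusion follows directly from Theorem~\ref{The_convergence}. First I would rewrite the doubly-indexed iterates $\{x_{i,j}\}$ as a single sequence $\{\xk\}$ by concatenating the inner loop across epochs, i.e. setting $\xk := x_{i,j}$ with $k = iN + j$; the reset $x_{i+1,0} = x_{i,N}$ guarantees that this relabeling is consistent and produces one continuous iterate sequence. Under this identification, the random-permutation sampling in line~2 of the algorithm means exactly that each index in $[N]$ is drawn once per epoch, which is Assumption~\ref{Assumption_Framework}(1) (reshuffling). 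Since $\beta = 0$ in DoG, the momentum update collapses to $\mkp = g_k$, so the step $x_{i,j+1} = x_{i,j} - \eta_{i,j} g_{i,j}$ coincides with $\xkp = \xk - \eta_k \mkp$.

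Next I would match the learning-rate construction. Choosing $\mu_k = \rho \cdot \sup_{i\leq k}\norm{x_i - x_0}$ and $\tau_k = 1$ in \eqref{Eq_Framework}, the denominator $\sqrt{\varepsilon_0 + \sum_{i=1}^k \tau_i \norm{m_i}^2}$ becomes $\sqrt{\varepsilon_0 + \sum_{i\leq k}\norm{g_i}^2}$ (using $m_i = g_i$), which is precisely the DoG learning rate in line~6. I would then confirm the remaining hypotheses of Theorem~\ref{The_convergence}. Assumption~\ref{Assumption_Framework}(2) is supplied directly by the theorem's hypothesis $\sup_{k\geq 0}\norm{\xk}\leq M$ (almost surely). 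For Assumption~\ref{Assumption_Framework}(3), the sequence $\mu_k = \rho\sup_{i\leq k}\norm{x_i-x_0}$ is clearly positive and non-decreasing, $\tau_k \equiv 1$ is trivially non-decreasing, and taking $\psi(x) := \max\{1, \rho\norm{x - x_0}\}$ gives the required bound $\max\{\tau_k,\mu_k\}\leq \sup_{0\leq i\leq k}\psi(x_i)$. Finally, Assumption~\ref{Assumption_Framework}(4) holds with $\delta_k \equiv 0$ since $g_{i,j}$ is taken as an exact element of $\D_{f_{\pi_{i,j}}}(x_{i,j})$, so that $g_k \in \D_{f_{i_k}}(\xk) = \D^{0}_{f_{i_k}}(\xk)$.

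Having checked that all hypotheses of Theorem~\ref{The_convergence} hold on each sample path where $\sup_k\norm{\xk}\leq M$, I would invoke the theorem to conclude that every cluster point of $\{\xk\}$ lies in $\{x\in\Rn : 0\in\D_f(x)\}$ and that $\{f(\xk)\}$ converges; since the boundedness hypothesis holds almost surely, the conclusion holds almost surely. The argument is essentially bookkeeping, so there is no serious analytic obstacle. The only point requiring mild care is the \emph{almost surely} qualifier: Theorem~\ref{The_convergence} is a deterministic statement, so I would apply it pathwise to the full-measure event $\{\sup_k\norm{\xk}\leq M\}$, noting that randomness enters only through the choice of permutations, which is already absorbed into the reshuffling condition of Assumption~\ref{Assumption_Framework}(1). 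This is exactly why the authors remark that the proof follows directly and can be omitted.
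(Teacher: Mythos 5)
Your proposal is correct and is exactly the argument the paper intends: the paper omits the proof, stating only that it follows from Theorem \ref{The_convergence} by taking $\mu_k = \rho\sup_{i\leq k}\norm{x_i - x_0}$, $\tau_k = 1$, and $\beta = 0$, which is precisely the instantiation you verify. Your pathwise application of the deterministic Theorem \ref{The_convergence} on the full-measure event where the iterates are bounded is the right way to handle the ``almost surely'' qualifier.
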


The numerical experiments in \cite{ivgi2023dog} show that in the training of deep neural networks, DoG has comparable performance as SGD with a well-tuned learning rate. The theoretical framework in \cite{ivgi2023dog} requires the convexity of the objective function $f$, a condition that cannot be satisfied in neural network training tasks. Our findings in Theorem \ref{Theo_convergence_DoG} explain the convergence behavior of DoG when applied to the training of nonsmooth neural networks, thereby helping to understand the empirical success of these learning-rate-free optimization methods in neural network training tasks.

\begin{rmk}
    As demonstrated in \cite{khaled2023dowg}, we can recover the DoWG by replacing Step 7 of Algorithm \ref{Alg:DOG} with the following scheme, 
    \begin{equation}
        \eta_{i,j} =  \frac{\mu_{i,j}^2}{ \sqrt{\varepsilon_0 +\sum_{i^* \leq i,~ j^*\leq j}\mu_{i,j}^2\norm{g_{i^*,j^*}}^2 }}.
    \end{equation}
    Then it is easy to verify that such an algorithm aligns with the framework \eqref{Eq_Framework}, by choosing $\mu_k = \tau_k = (\max_{i\leq k}\norm{x_i - x_0})^2$ and $\beta = 0$. Therefore, similar to the results in Theorem \ref{Theo_convergence_DoG}, our proposed framework \eqref{Eq_Framework} is capable of providing theoretical convergence guarantees for DoWG in training nonsmooth neural networks, which extends the results from \cite{khaled2023dowg}.  
\end{rmk}

\subsection{Learning-rate-free Momentum SGD}

In this subsection, we develop a learning-rate-free momentum SGD with convergence guarantees in training nonsmooth neural networks. Motivated by the DoG method proposed by \cite{ivgi2023dog}, we consider choosing $\mu_k = \max_{i\leq k}\norm{x_i - x_0}$, $\tau_k = 1$ and $\beta \in (0,1)$ in \eqref{Eq_Framework}.  The detailed algorithm, named learning-rate-free momentum SGD (LFM),  is presented in Algorithm \ref{Alg:PFSGD}. 

In our proposed LFM, we employ the momentum terms $\{\mk\}$ for acceleration, which makes LFM different from the DoG and DoWG methods. Although the heavy-ball momentum is also employed in D-adapted SGD proposed in \cite[Algorithm 4]{defazio2023learning}, that method does not have any convergence guarantee, even in minimizing convex objective functions.

\begin{algorithm}[htbp]
	\begin{algorithmic}[1]  
		\Require Initial point $x_0 \in \Rn$, and $m_0 \in \Rn$, parameters  $\varepsilon_0 > 0$ and $\beta \in [0,1)$;
		\For{$i = 0,1,2,...$}
                \State Sample $\{\pi_{i,0},..., \pi_{i,N-1}\}$ as random permutation of $[N]$; 
                \For{$j = 0,..., N-1$}
        		\State Compute $g_{i,j} \in \D_{f_{\pi_{i,j}}}(x_{i,j})$;
        		\State Compute $\mu_{i,j} = \rho \cdot \max_{i^* \leq i,~ j^*\leq j}\{\norm{x_{i^*,j^*} - x_0}\}$;
                    \State Compute the learning rate $\eta_{i,j} = \frac{\mu_{i,j}}{ \sqrt{\varepsilon_0 +\sum_{i^* \leq i,~ j^*\leq j}\norm{m_{i^*,j^*}}^2 }}$;
        		\State Update the momentum term by $m_{i,j+1} = \beta m_{i,j} + (1-\beta) g_{i,j}$;
        		\State Update the primal variables $x$ by $x_{i, j+1} = x_{i, j} - \eta_{i,j} m_{i,j+1}$;
                \EndFor
                \State $x_{i+1,0} = x_{i,N}$, $m_{i+1,0} = m_{i,N}$. 
		\EndFor
	\end{algorithmic}  
	\caption{A learning-rate-free momentum-accelerated SGD (LFM) for solving \eqref{Prob_Ori}.}  
	\label{Alg:PFSGD}
\end{algorithm}

For the convergence properties of our proposed LFM, we present the following theorem illustrating that our proposed LFM can find a $\D_f$-stationary point of $f$ almost surely. The proof of Theorem \ref{Theo_convergence_LFM} directly follows from the results in Theorem \ref{The_convergence} by choosing $\mu_k = \sup_{i\leq k}\norm{x_i - x_0}$ and $\tau_k = 1$ in the framework \eqref{Eq_Framework}. 

\begin{theo}
    \label{Theo_convergence_LFM}
    Suppose Assumption \ref{Assumption_f} holds, and there exists a constant $M> 0$ such that  $\sup_{k\geq 0} \norm{\xk} \leq M$ almost surely. Then almost surely, any cluster point of sequence $\{\xk\}$ generated by Algorithm \ref{Alg:PFSGD} lies in the subset $\{x \in \Rn: 0 \in \D_f(x) \}$, and the sequence $\{f(\xk)\}$ converges. 
\end{theo}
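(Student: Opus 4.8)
The plan is to prove Theorem~\ref{Theo_convergence_LFM} as a direct specialization of the general convergence result Theorem~\ref{The_convergence}. The first step is to rewrite the doubly-indexed iterates $\{x_{i,j}\}$ produced by Algorithm~\ref{Alg:PFSGD} as a single sequence $\{\xk\}$ via the flattening $k = iN + j$, so that one full epoch corresponds to $N$ consecutive values of $k$ and the epoch-boundary assignments $x_{i+1,0} = x_{i,N}$, $m_{i+1,0} = m_{i,N}$ make the flattened sequence well defined. Under this identification, comparing the momentum, learning-rate, and iterate updates of Algorithm~\ref{Alg:PFSGD} with the update rule~\eqref{Eq_Framework} shows that LFM is exactly the instance of the framework obtained by setting $\tau_k = 1$, $\mu_k = \rho\cdot\max_{i\leq k}\norm{x_i - x_0}$, and momentum parameter $\beta \in (0,1) \subset [0,1)$. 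It therefore suffices to verify that, almost surely, the hypotheses of Theorem~\ref{The_convergence} --- namely Assumption~\ref{Assumption_f} and Assumption~\ref{Assumption_Framework} --- hold, and then to quote its conclusion.

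Since Assumption~\ref{Assumption_f} is imposed in the statement, the work reduces to checking the four parts of Assumption~\ref{Assumption_Framework}. For part~(1), the per-epoch random permutation $\{\pi_{i,0},\ldots,\pi_{i,N-1}\}$ of $[N]$ guarantees $\{i_k : jN \leq k < (j+1)N\} = [N]$ for every $j$, so the index sequence is generated by reshuffling for \emph{every} realization. Part~(2) is precisely the almost-sure boundedness hypothesis $\sup_{k\geq 0}\norm{\xk}\leq M$. For part~(3), $\tau_k\equiv 1$ is positive and non-decreasing, while $\mu_k = \rho\max_{i\leq k}\norm{x_i - x_0}$ is a running maximum, hence non-decreasing; taking the locally bounded function $\psi(x) := \max\{1,\,\rho\norm{x - x_0}\}$ gives $\max\{\tau_k,\mu_k\}\leq \sup_{0\leq i\leq k}\psi(x_i)$. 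Finally, for part~(4), LFM computes $g_{i,j}\in\D_{f_{\pi_{i,j}}}(x_{i,j})$ exactly, so one takes the perturbation sequence $\delta_k \equiv 0$, which is trivially diminishing and yields $g_k\in\D^{\delta_k}_{f_{i_k}}(\xk)$. With all hypotheses in place, Theorem~\ref{The_convergence} applies verbatim and delivers that every cluster point of $\{\xk\}$ lies in $\{x\in\Rn : 0\in\D_f(x)\}$ and that $\{f(\xk)\}$ converges.

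The ``almost surely'' qualifier enters only through parts~(1) and~(2): the reshuffling property holds pathwise for any draw of the permutations, so the sole source of randomness affecting the conclusion is the event $\{\sup_{k\geq 0}\norm{\xk}\leq M\}$, which is assumed to have full probability; restricted to that event the entire argument is deterministic. I expect the only genuinely delicate points to be bookkeeping in the index flattening --- confirming that the running maxima and cumulative sums defining $\mu_{i,j}$ and $\eta_{i,j}$ in the nested-loop notation coincide with their single-index counterparts $\mu_k$ and $\eta_k$ in~\eqref{Eq_Framework}, and that non-decreasingness of $\mu_k$ is preserved across epoch boundaries --- together with the standard DoG initialization subtlety that $\mu_0 = \rho\norm{x_0 - x_0} = 0$, so that strict positivity of $\mu_k$ (as required in part~(3)) must be arranged either by the customary small positive offset in the first step or by noting that the conclusion only invokes the asymptotic behavior of $\{\eta_k\}$. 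Everything else is a mechanical matching of the two update schemes.
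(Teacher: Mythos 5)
Your proposal is correct and follows essentially the same route as the paper, which likewise proves Theorem \ref{Theo_convergence_LFM} by observing that Algorithm \ref{Alg:PFSGD} is the instance of the framework \eqref{Eq_Framework} with $\mu_k = \rho\max_{i\leq k}\norm{x_i - x_0}$, $\tau_k = 1$, and then invoking Theorem \ref{The_convergence}. Your write-up is in fact more careful than the paper's one-line argument, in particular in verifying the four parts of Assumption \ref{Assumption_Framework} explicitly and in flagging the $\mu_0 = 0$ positivity issue at initialization.
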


\section{Numerical Experiments}

In this section, we evaluate the numerical performance of our proposed learning-rate-free momentum SGD (LFM), as outlined in Algorithm \ref{Alg:PFSGD}, for training nonsmooth neural networks.   All numerical experiments in this section are conducted on a server equipped with an Intel Xeon 6342 CPU and $6$ NVIDIA GeForce RTX 3090 GPUs, running Python 3.8 and PyTorch 1.9.0. 

In our numerical experiments, we focus on image classification tasks that train nonsmooth neural networks on the CIFAR datasets \cite{krizhevsky2009learning} and Imagenet \cite{deng2009imagenet}. It is important to note that we utilize the Rectified Linear Unit (ReLU) as the activation function for all networks, including ResNet50 \cite{he2016deep}, ResNet18, VGG-Net, and MobileNet. Consequently, the corresponding objective function is definable \cite{bolte2020mathematical} but exhibits non-smoothness and lacks Clarke regularity.

\begin{figure}[!htb]
	\centering
	\subfigure[LFM on CIFAR-10 with $\beta = 0.9$]{
		\begin{minipage}[t]{0.40\linewidth}
			\centering
			\includegraphics[width=\linewidth]{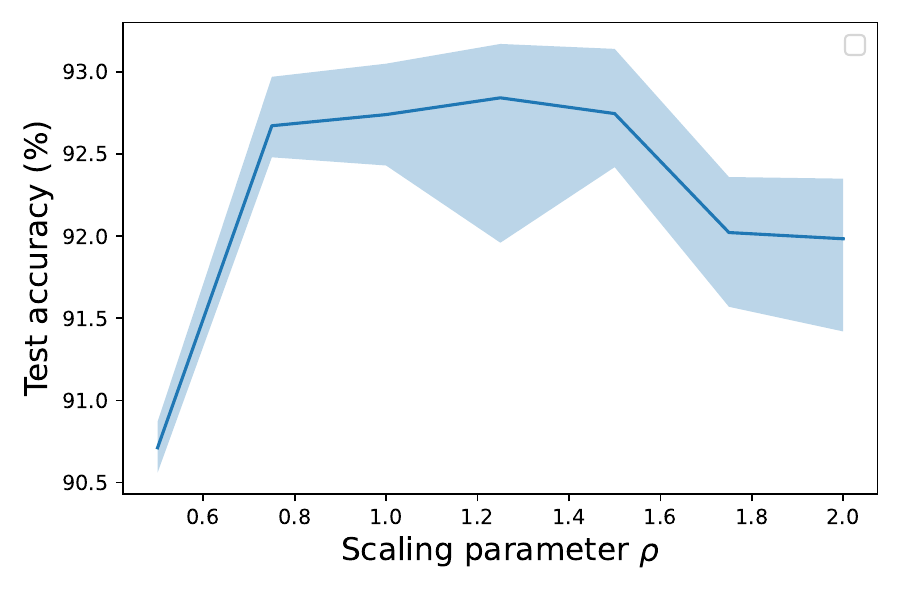}
			\label{Fig:Fig_lr_0_testacc}
		\end{minipage}%
	}%
	\subfigure[LFM on CIFAR-100  with $\beta = 0.9$]{
		\begin{minipage}[t]{0.40\linewidth}
			\centering
			\includegraphics[width=\linewidth]{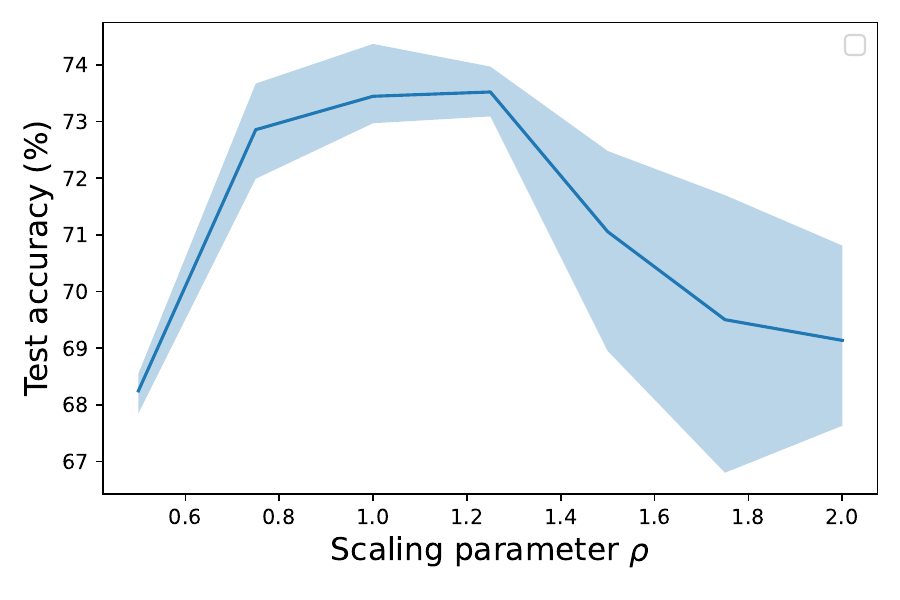}
			\label{Fig:Fig_lr_1_testacc}
		\end{minipage}%
	}%

        \subfigure[LFM on CIFAR-10 with $\rho = 1$]{
		\begin{minipage}[t]{0.40\linewidth}
			\centering
			\includegraphics[width=\linewidth]{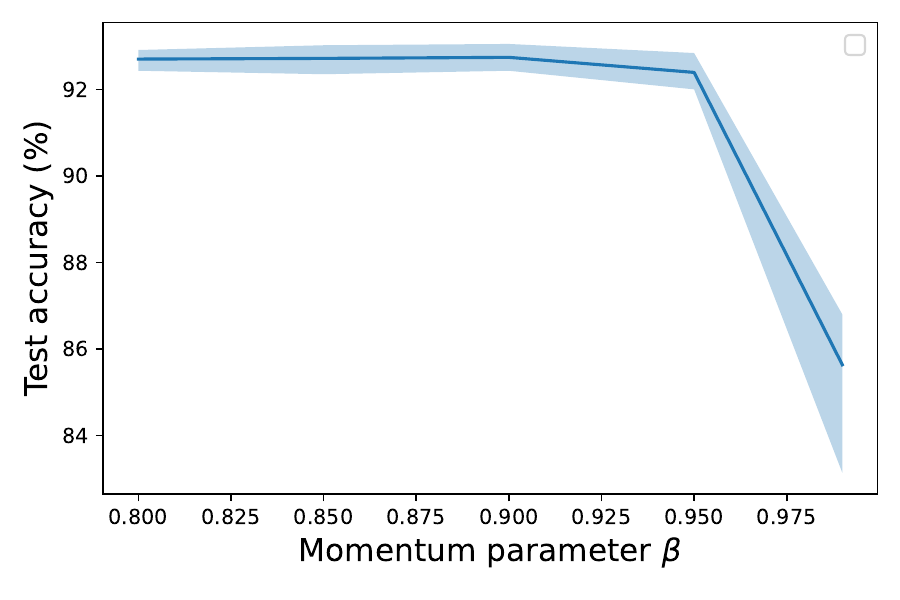}
			\label{Fig:Fig_moment_0_testacc}
		\end{minipage}%
	}%
	\subfigure[LFM on CIFAR-100 with $\rho = 1$]{
		\begin{minipage}[t]{0.40\linewidth}
			\centering
			\includegraphics[width=\linewidth]{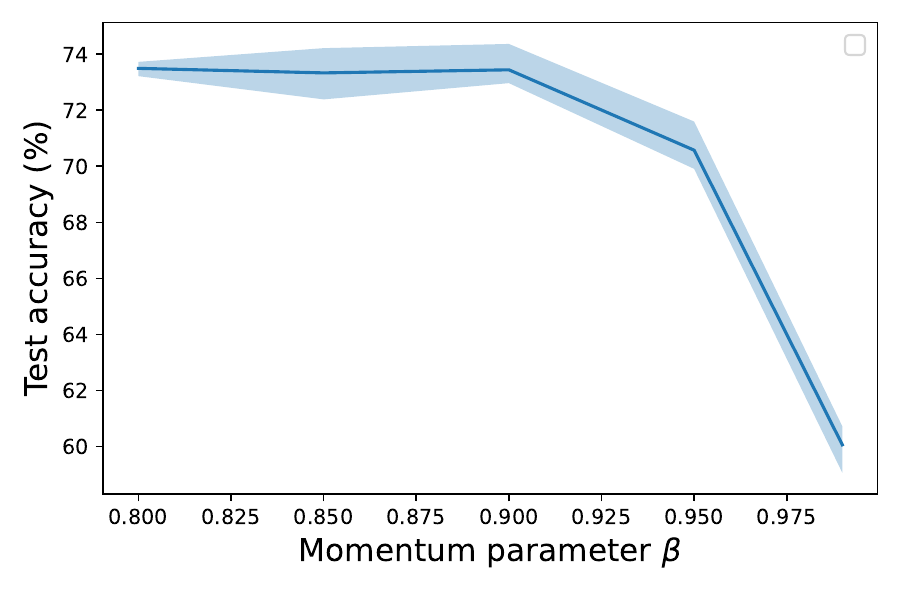}
			\label{Fig:Fig_moment_1_testacc}
		\end{minipage}%
	}%
	\caption{Numerical results on the final-epoch test accuracy of LFM with different scaling parameters $\rho$ and momentum parameters $\beta$.}
        \label{Fig_Param_LFM}
\end{figure}

\subsection{Choice of parameters in LFM}
We first investigate the performance of LFM under different choices of scaling parameter $\rho$ and momentum parameter $\beta$. We conducted evaluations on image classification tasks utilizing the CIFAR datasets and the ResNet50 model. For these evaluations, we selected values for $\rho$ from the set $\{0.5, 0.75, 1.0, 1.25, 1.5, 1.75, 2.0\}$ and for $\beta$ from the set $\{0.8, 0.85, 0.9, 0.95, 0.99\}$. In each test instance, we run LFM $5$ times with different random seeds. Figure \ref{Fig_Param_LFM} exhibits the final-epoch test accuracy of the training results yielded by LFM with different choices of $\rho$ and $\beta$. From the results in Figure \ref{Fig_Param_LFM}, LFM performs well when $\rho$ is chosen between $0.75$ and $1.25$, which is a wide enough interval for choosing the scaling parameter $\rho$. This robustness in parameter tuning distinctly sets LFM apart from algorithms such as Adam and SGD, which typically require an extensive grid search to determine a proper learning rate. Furthermore, from Figure \ref{Fig_Param_LFM}, we observe that the performance of LFM is highly stable when the momentum parameter ranges around $0.9$. These results illustrate that the performance of LFM is robust to the choices of its scaling parameter $\rho$ and momentum parameter $\beta$. According to these numerical experiments, we recommend a default setting of $\rho = 1$ and $\beta = 0.9$ for the practical implementations of LFM.

\subsection{Comparison with state-of-the-art learning-rate-free methods}

\begin{figure}[!tb]
	\centering
	\subfigure[Test accuracy, CIFAR-10]{
		\begin{minipage}[t]{0.33\linewidth}
			\centering
			\includegraphics[width=\linewidth]{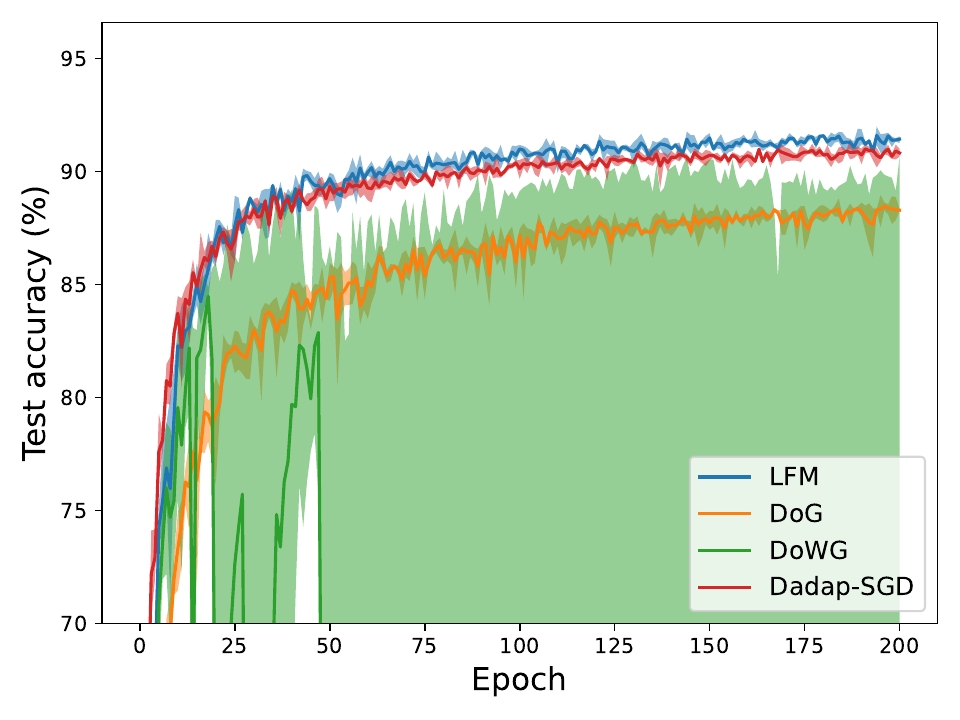}
			\label{Fig:Fig_curve_0_testacc}
		\end{minipage}%
	}%
	\subfigure[Test loss, CIFAR-10]{
		\begin{minipage}[t]{0.33\linewidth}
			\centering
			\includegraphics[width=\linewidth]{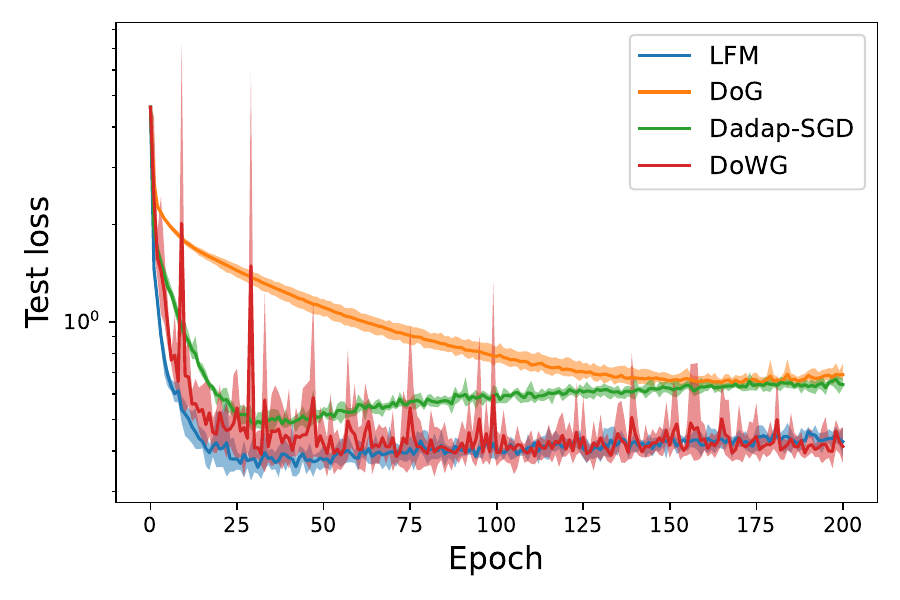}
			\label{Fig:Fig_curve_0_testloss}
		\end{minipage}%
	}%
	\subfigure[Training loss, CIFAR-10]{
		\begin{minipage}[t]{0.33\linewidth}
			\centering
			\includegraphics[width=\linewidth]{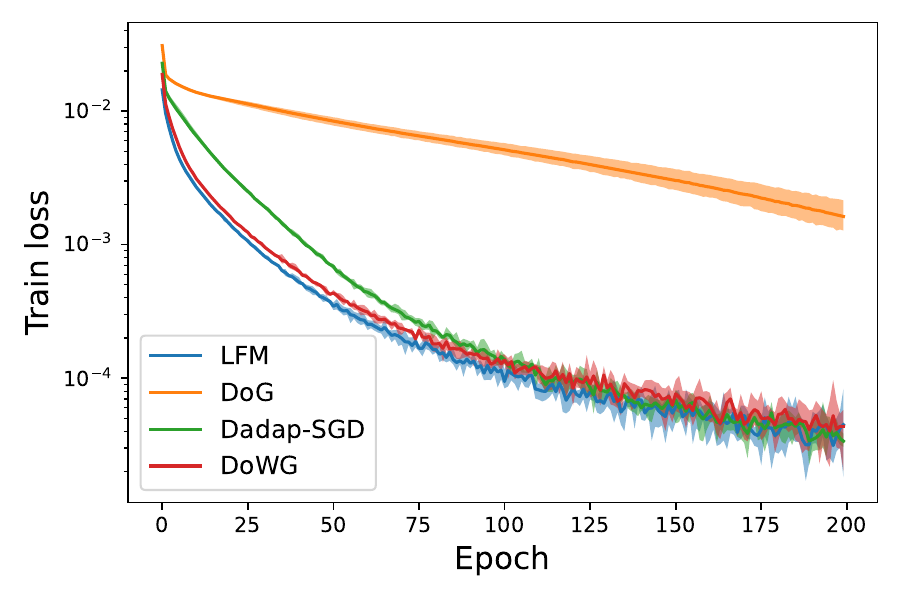}
			\label{Fig:Fig_curve_0_trainloss}
		\end{minipage}%
	}%

        \subfigure[Test accuracy, CIFAR-100]{
		\begin{minipage}[t]{0.33\linewidth}
			\centering
			\includegraphics[width=\linewidth]{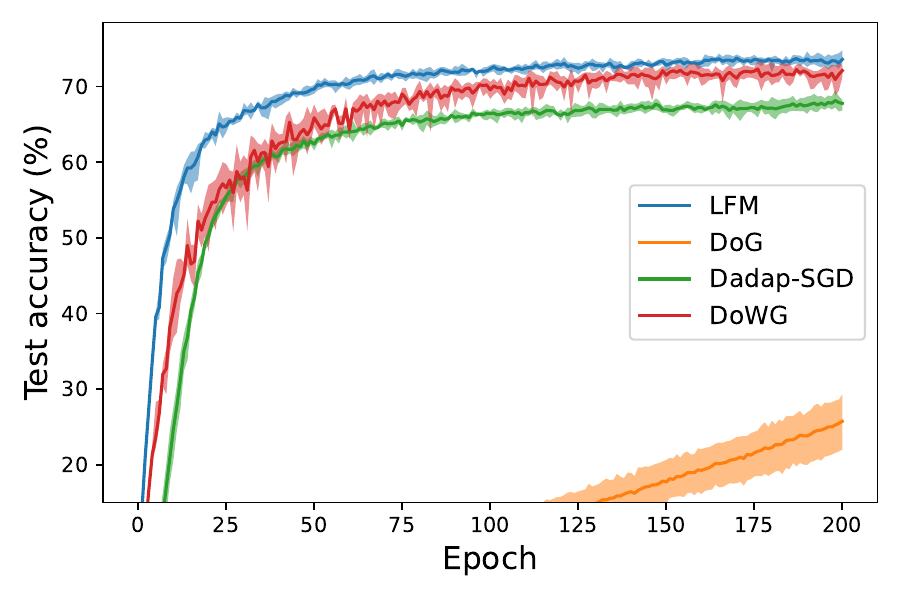}
			\label{Fig:Fig_curve_1_testacc}
		\end{minipage}%
	}%
	\subfigure[Test loss, CIFAR-100]{
		\begin{minipage}[t]{0.33\linewidth}
			\centering
			\includegraphics[width=\linewidth]{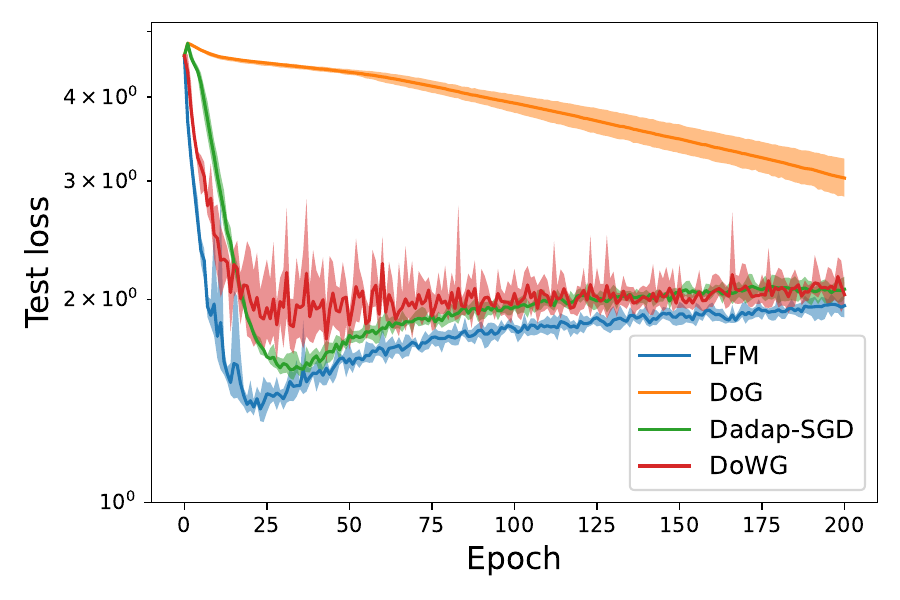}
			\label{Fig:Fig_curve_1_testloss}
		\end{minipage}%
	}%
	\subfigure[Training loss, CIFAR-100]{
		\begin{minipage}[t]{0.33\linewidth}
			\centering
			\includegraphics[width=\linewidth]{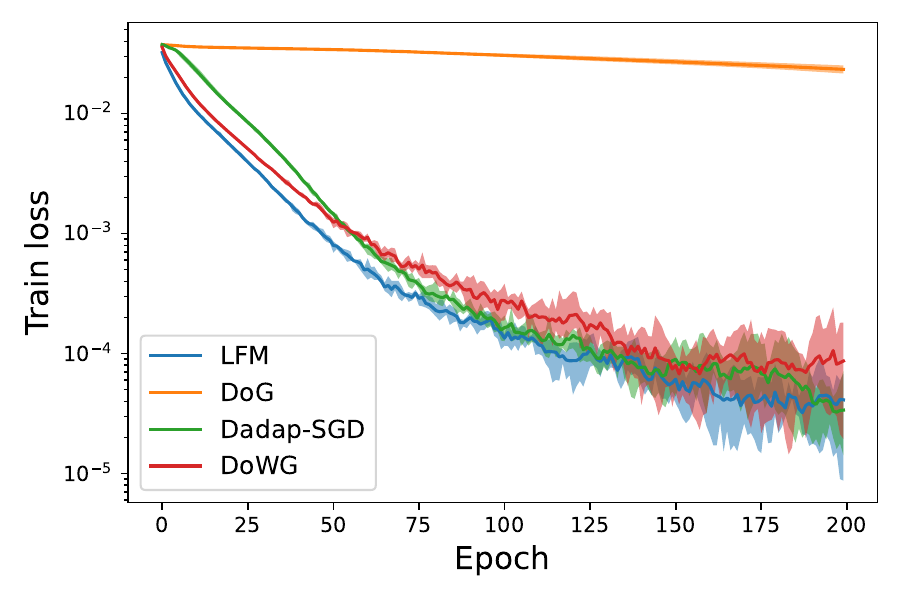}
			\label{Fig:Fig_curve_1_trainloss}
		\end{minipage}%
	}%
	\caption{Numerical results on applying LMF in Algorithm \ref{Alg:PFSGD}, DoG, DoWG, and D-adapted SGD for training ResNet50 over CIFAR datasets.}
	\label{Fig_Test_Resnet50}
\end{figure}



\begin{figure}[!tb]
	\centering
	\subfigure[Test accuracy, CIFAR-10]{
		\begin{minipage}[t]{0.33\linewidth}
			\centering
			\includegraphics[width=\linewidth]{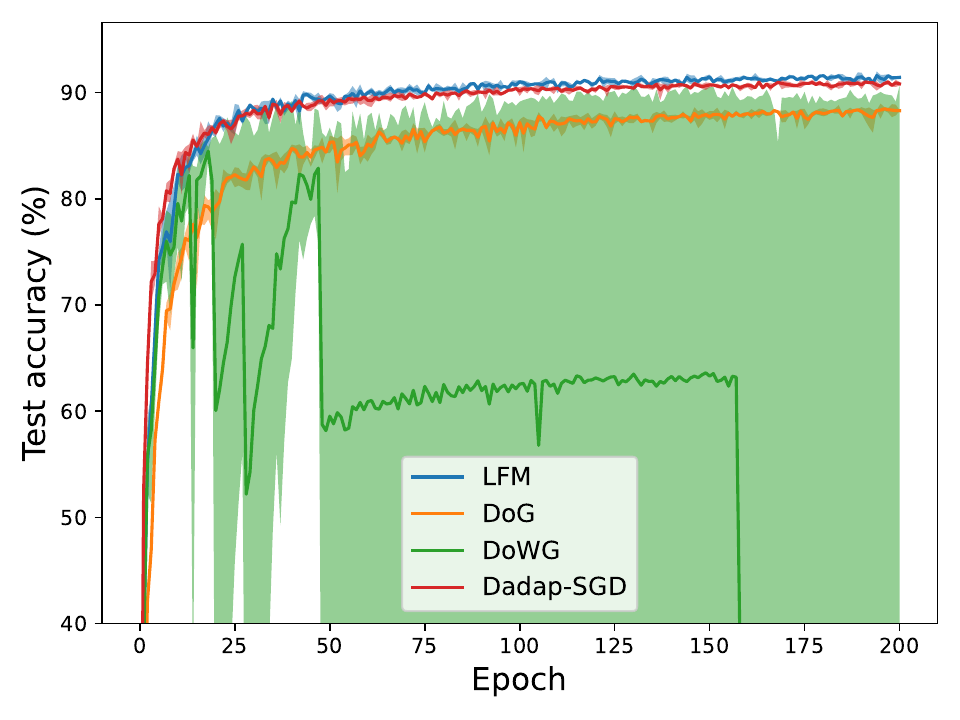}
			\label{Fig:vgg_0_testacc}
		\end{minipage}%
	}%
	\subfigure[Test loss, CIFAR-10]{
		\begin{minipage}[t]{0.33\linewidth}
			\centering
			\includegraphics[width=\linewidth]{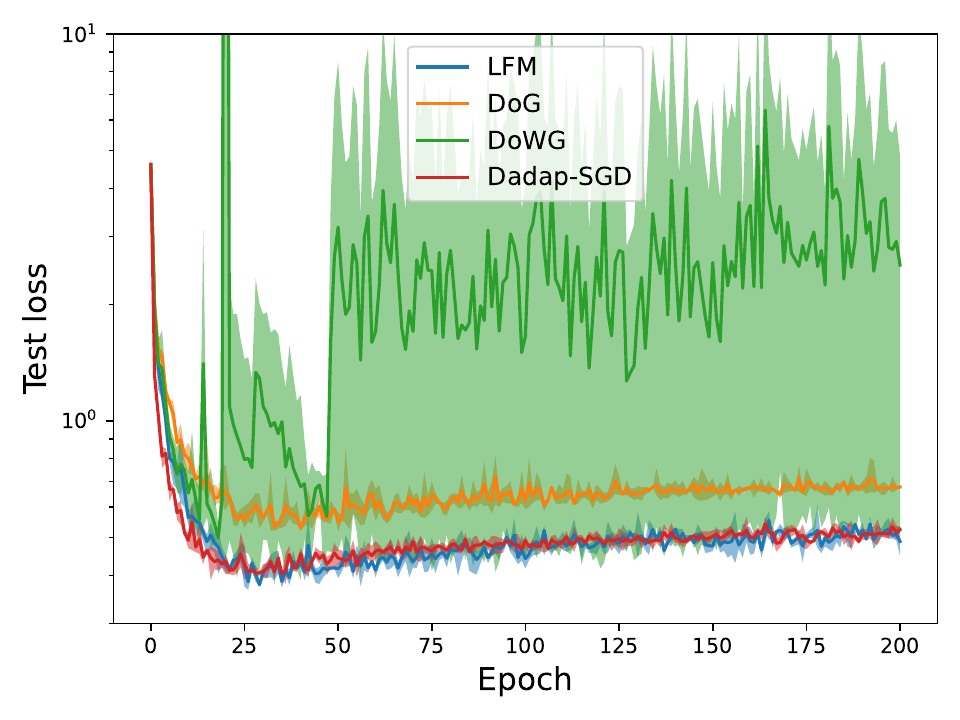}
			\label{Fig:vgg_0_testloss}
		\end{minipage}%
	}%
	\subfigure[Training loss, CIFAR-10]{
		\begin{minipage}[t]{0.33\linewidth}
			\centering
			\includegraphics[width=\linewidth]{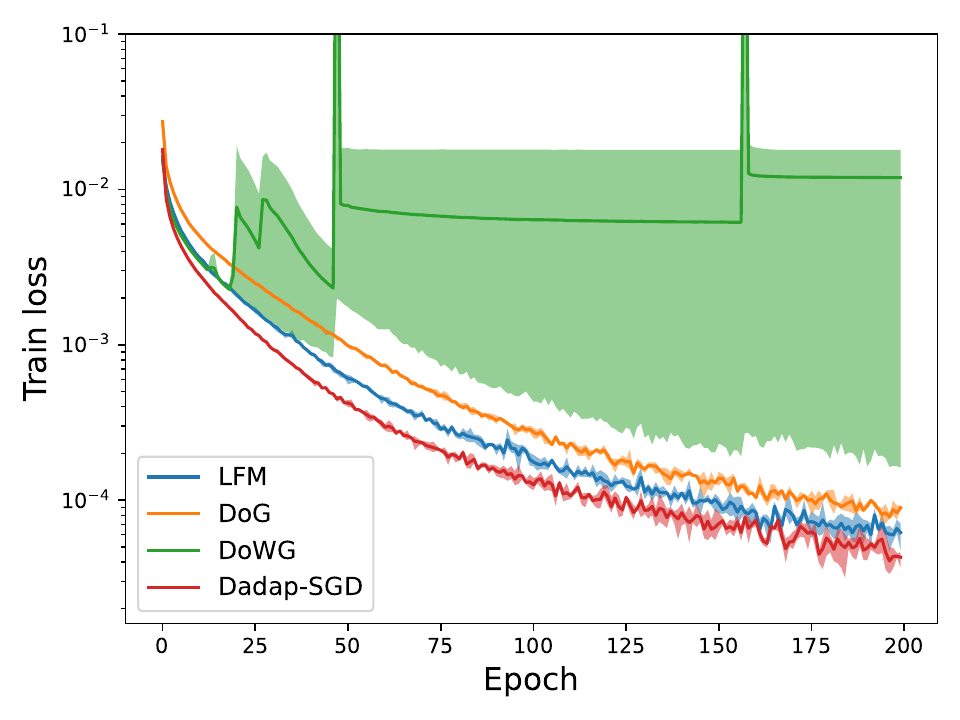}
			\label{Fig:vgg_0_trainloss}
		\end{minipage}%
	}%

        \subfigure[Test accuracy, CIFAR-100]{
		\begin{minipage}[t]{0.33\linewidth}
			\centering
			\includegraphics[width=\linewidth]{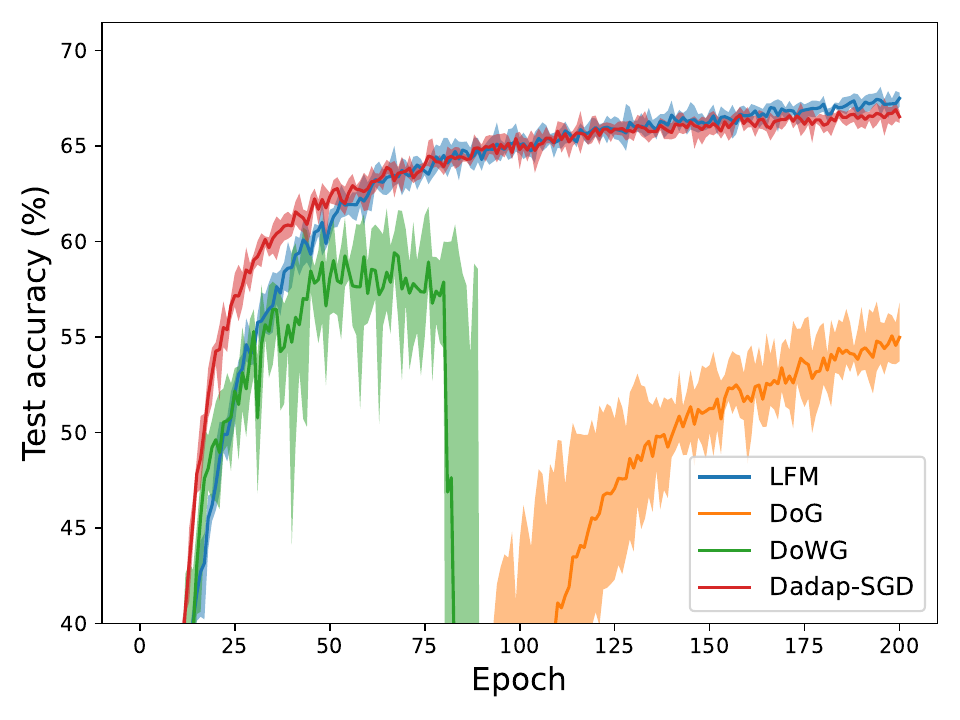}
			\label{Fig:vgg_1_testacc}
		\end{minipage}%
	}%
	\subfigure[Test loss, CIFAR-100]{
		\begin{minipage}[t]{0.33\linewidth}
			\centering
			\includegraphics[width=\linewidth]{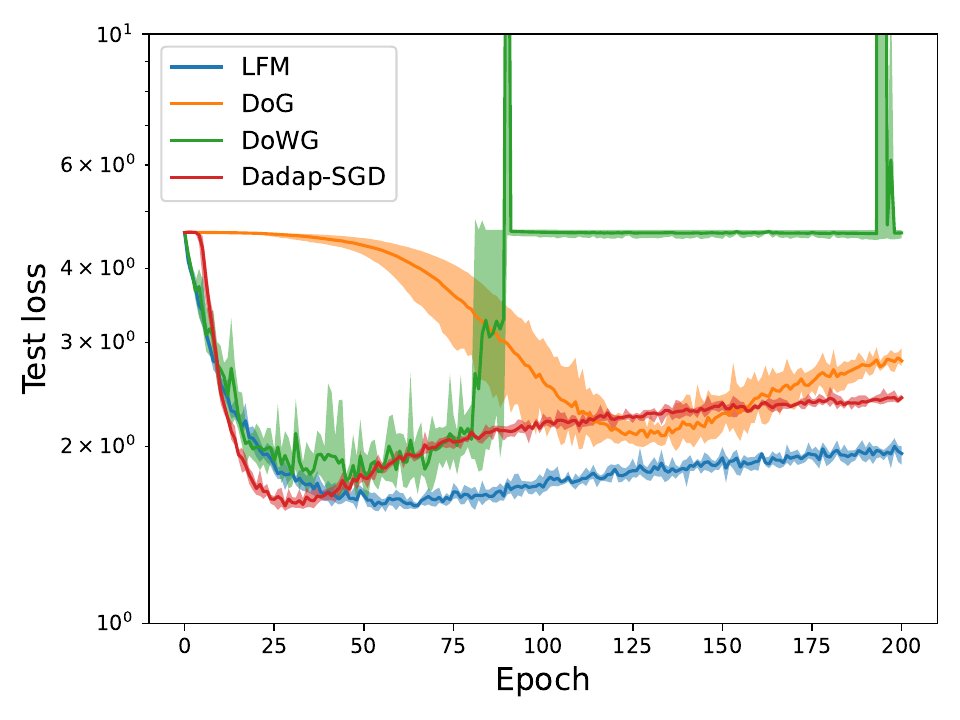}
			\label{Fig:vgg_1_testloss}
		\end{minipage}%
	}%
	\subfigure[Training loss, CIFAR-100]{
		\begin{minipage}[t]{0.33\linewidth}
			\centering
			\includegraphics[width=\linewidth]{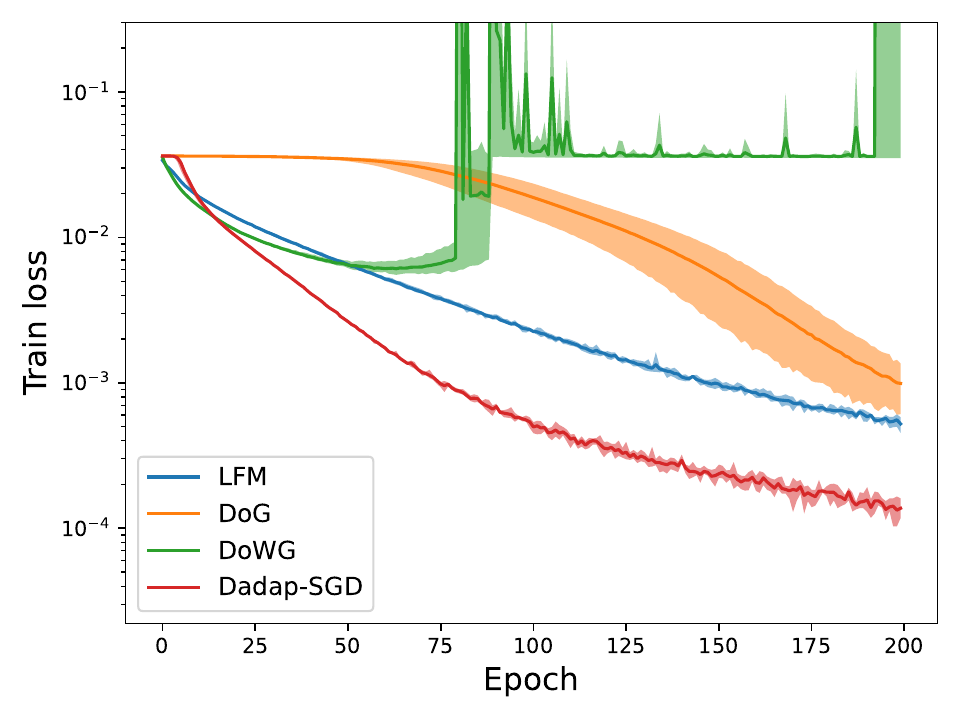}
			\label{Fig:vgg_1_trainloss}
		\end{minipage}%
	}%
	\caption{Numerical results on applying LMF in Algorithm \ref{Alg:PFSGD}, DoG, DoWG, and D-adapted SGD for training VGG-Net over CIFAR datasets.}
	\label{Fig_Test_VGG}
\end{figure}

\begin{figure}[!tb]
	\centering
	\subfigure[Test accuracy, CIFAR-10]{
		\begin{minipage}[t]{0.33\linewidth}
			\centering
			\includegraphics[width=\linewidth]{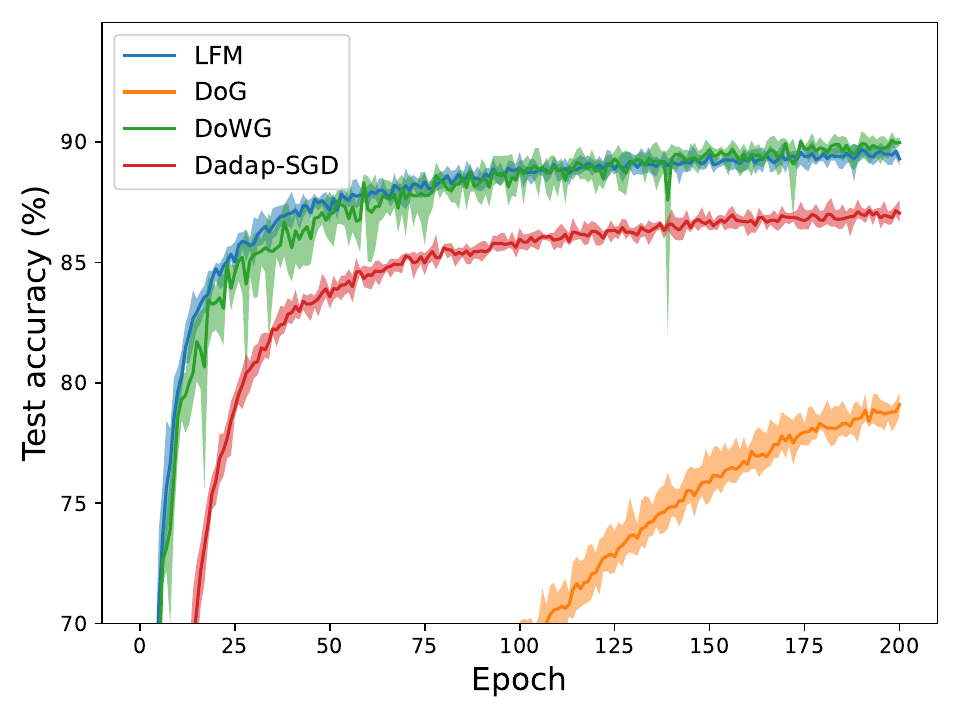}
			\label{Fig:mobilenet_0_testacc}
		\end{minipage}%
	}%
	\subfigure[Test loss, CIFAR-10]{
		\begin{minipage}[t]{0.33\linewidth}
			\centering
			\includegraphics[width=\linewidth]{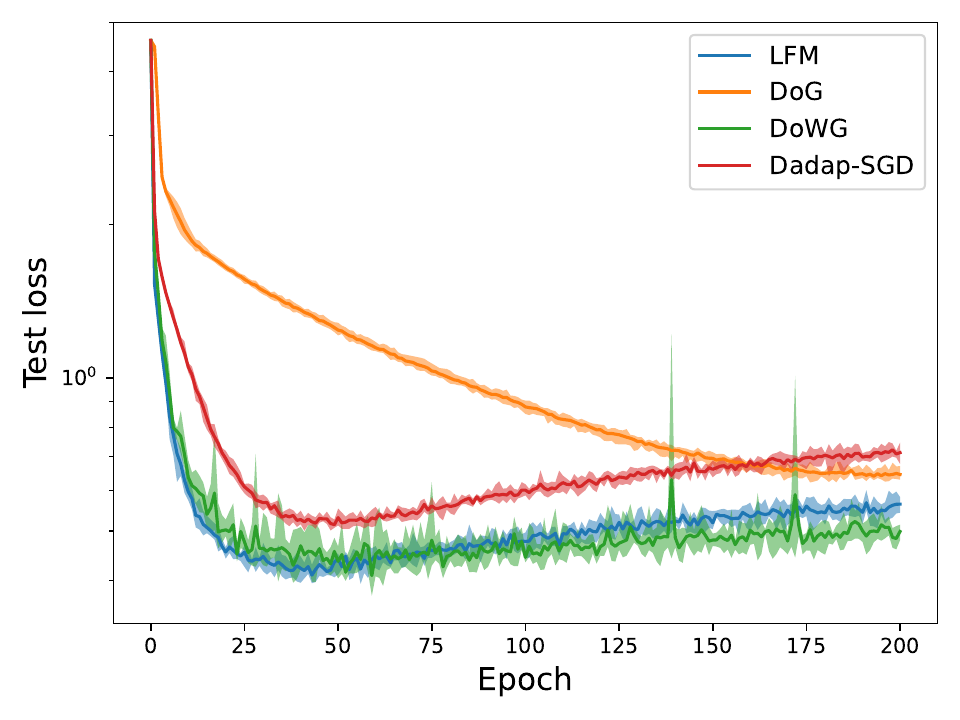}
			\label{Fig:mobilenet_0_testloss}
		\end{minipage}%
	}%
	\subfigure[Training loss, CIFAR-10]{
		\begin{minipage}[t]{0.33\linewidth}
			\centering
			\includegraphics[width=\linewidth]{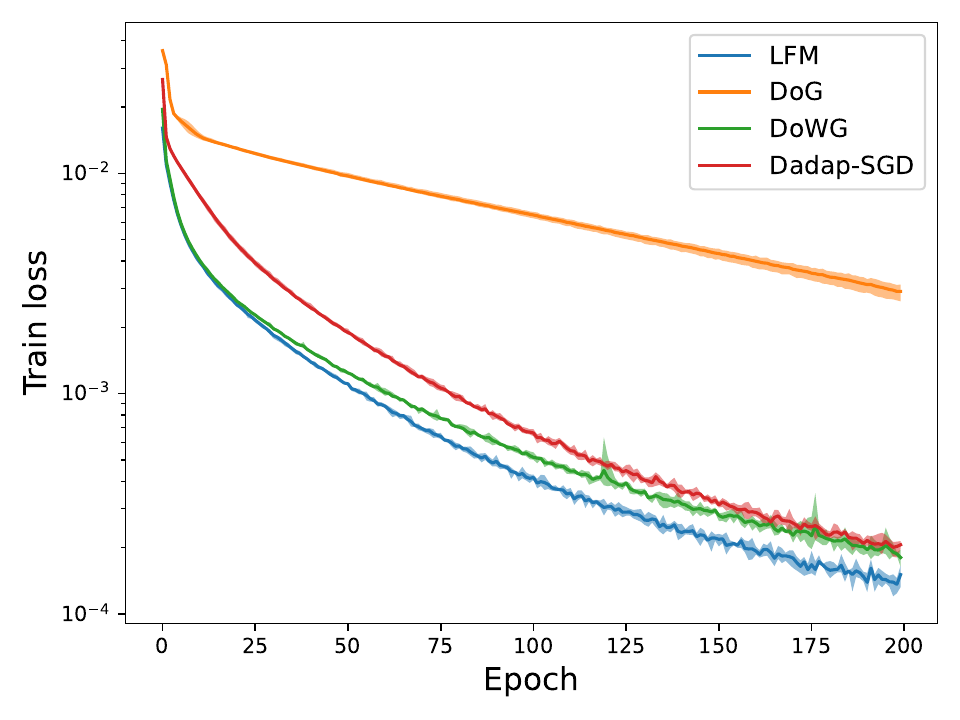}
			\label{Fig:mobilenet_0_trainloss}
		\end{minipage}%
	}%

        \subfigure[Test accuracy, CIFAR-100]{
		\begin{minipage}[t]{0.33\linewidth}
			\centering
			\includegraphics[width=\linewidth]{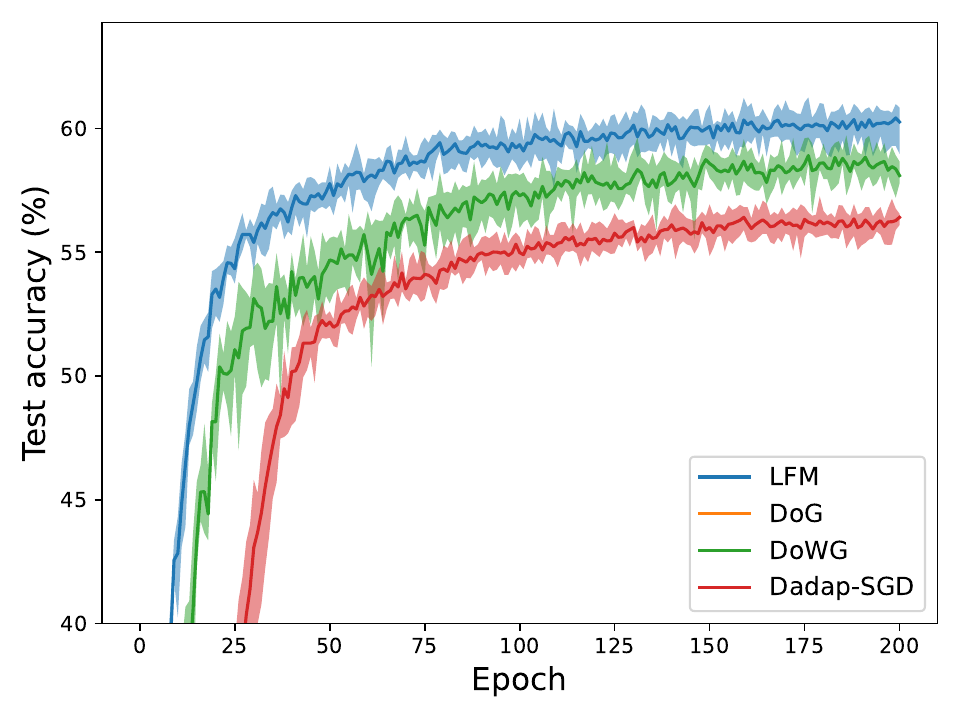}
			\label{Fig:mobilenet_1_testacc}
		\end{minipage}%
	}%
	\subfigure[Test loss, CIFAR-100]{
		\begin{minipage}[t]{0.33\linewidth}
			\centering
			\includegraphics[width=\linewidth]{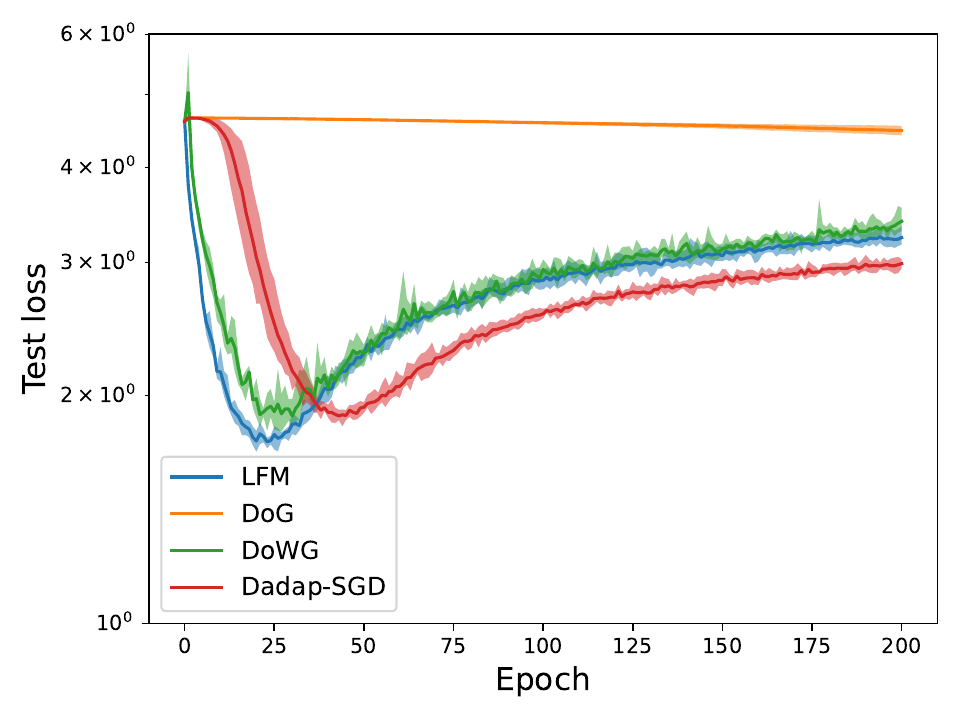}
			\label{Fig:mobilenet_1_testloss}
		\end{minipage}%
	}%
	\subfigure[Training loss, CIFAR-100]{
		\begin{minipage}[t]{0.33\linewidth}
			\centering
			\includegraphics[width=\linewidth]{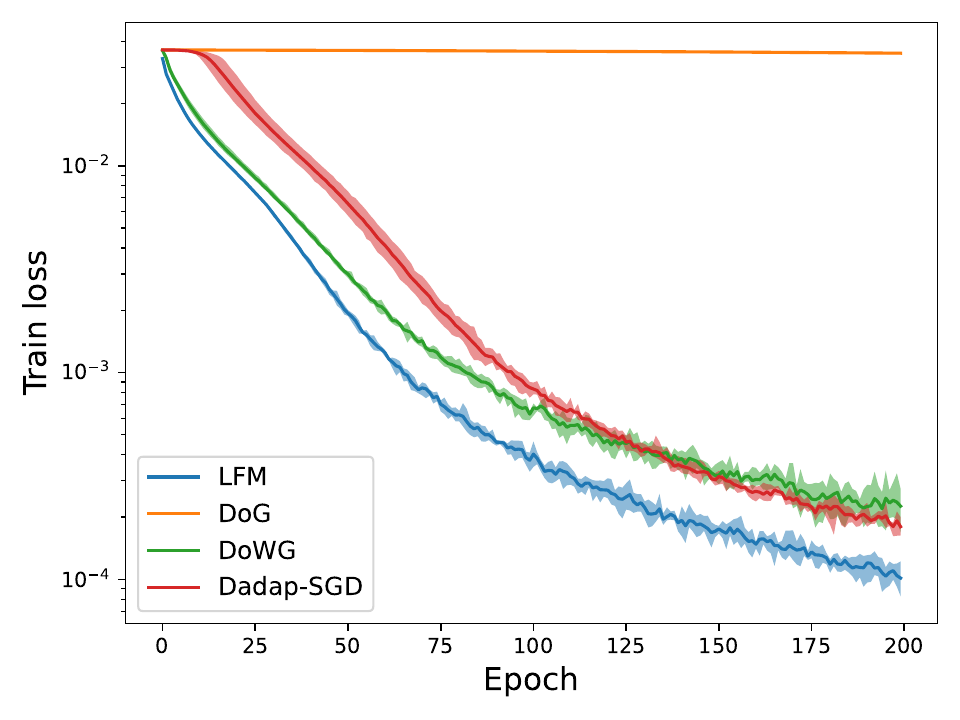}
			\label{Fig:mobilenet_1_trainloss}
		\end{minipage}%
	}%
	\caption{Numerical results on applying LMF in Algorithm \ref{Alg:PFSGD}, DoG, DoWG, and D-adapted SGD for training MobileNet over CIFAR datasets.}
	\label{Fig_Test_Mobilenet}
\end{figure}

\begin{figure}[!tb]
	\centering
	\subfigure[Top-1 test accuracy]{
		\begin{minipage}[t]{0.33\linewidth}
			\centering
			\includegraphics[width=\linewidth]{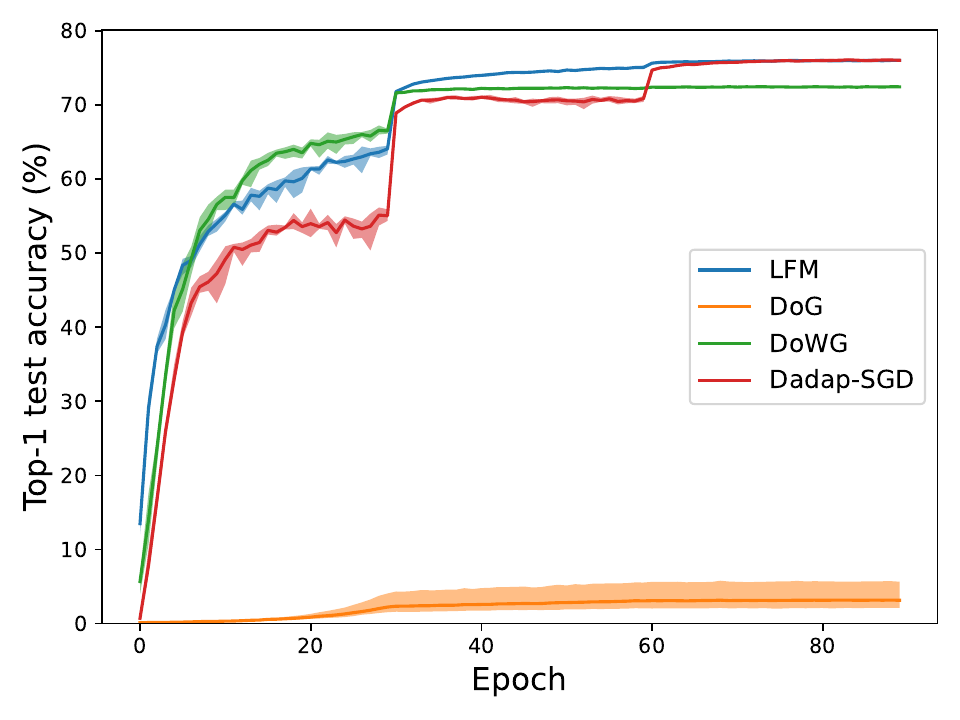}
			\label{Fig:imagenet_testacc_top1}
		\end{minipage}%
	}%
	\subfigure[Top-5 test accuracy]{
		\begin{minipage}[t]{0.33\linewidth}
			\centering
			\includegraphics[width=\linewidth]{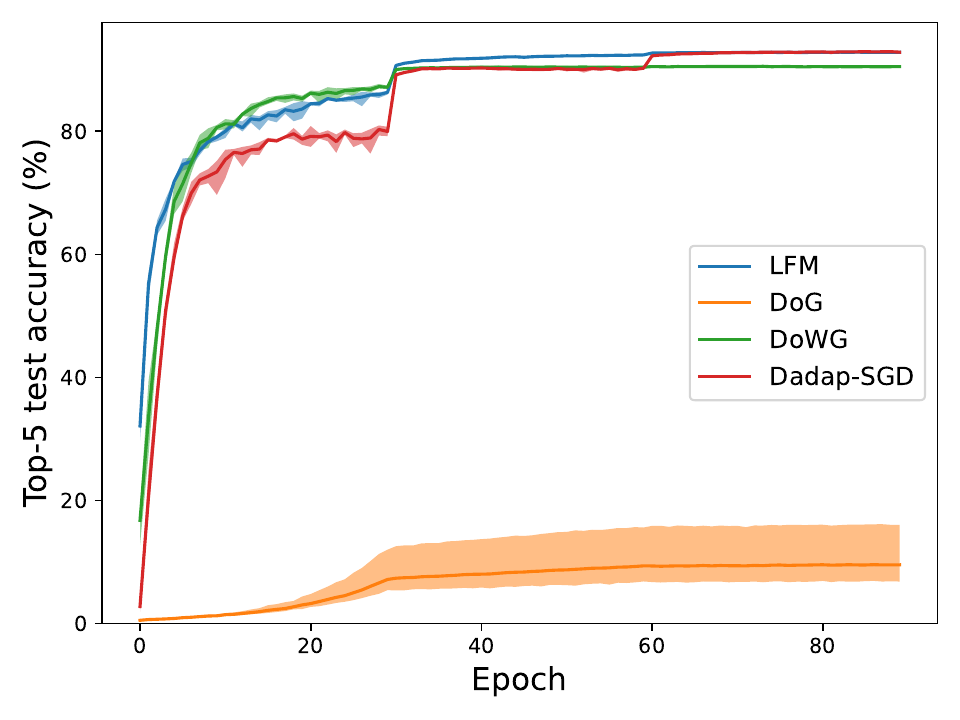}
			\label{Fig:imagenet_testacc_top5}
		\end{minipage}%
	}%
	\subfigure[Test loss]{
		\begin{minipage}[t]{0.33\linewidth}
			\centering
			\includegraphics[width=\linewidth]{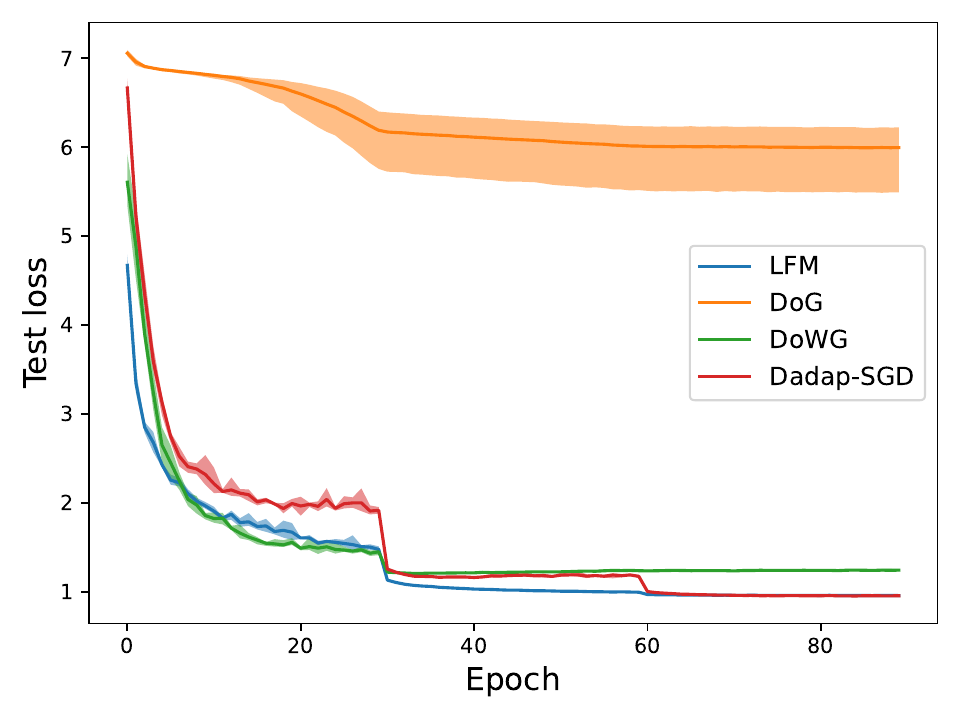}
			\label{Fig:imagenet_testloss}
		\end{minipage}%
	}%

        \subfigure[Top-1 train accuracy]{
		\begin{minipage}[t]{0.33\linewidth}
			\centering
			\includegraphics[width=\linewidth]{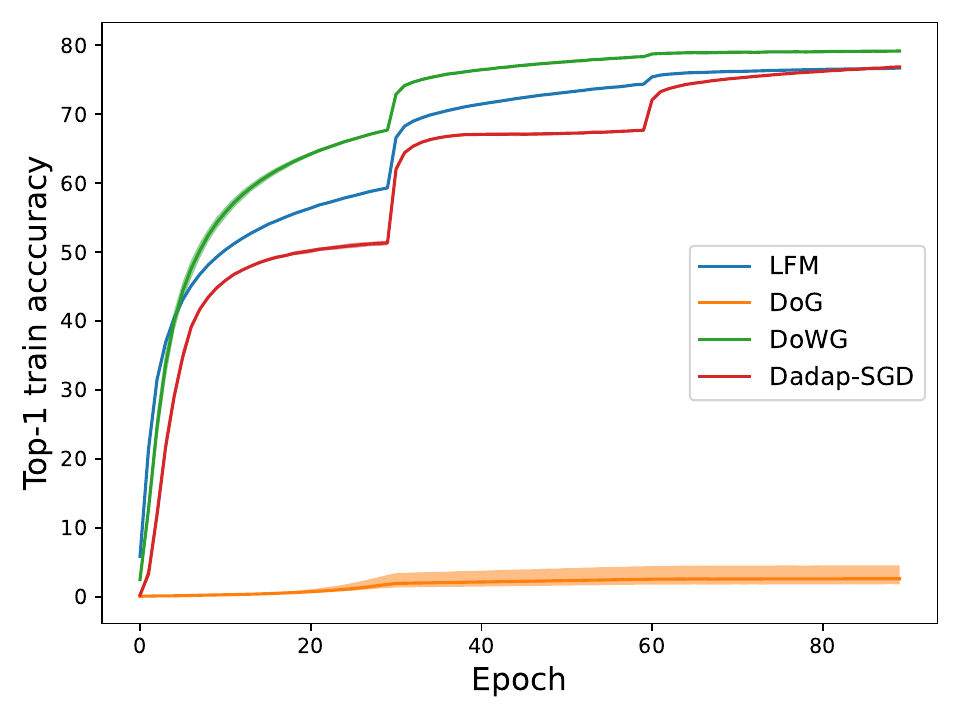}
			\label{Fig:imagenet_trainacc_top1}
		\end{minipage}%
	}%
	\subfigure[Top-5 train accuracy]{
		\begin{minipage}[t]{0.33\linewidth}
			\centering
			\includegraphics[width=\linewidth]{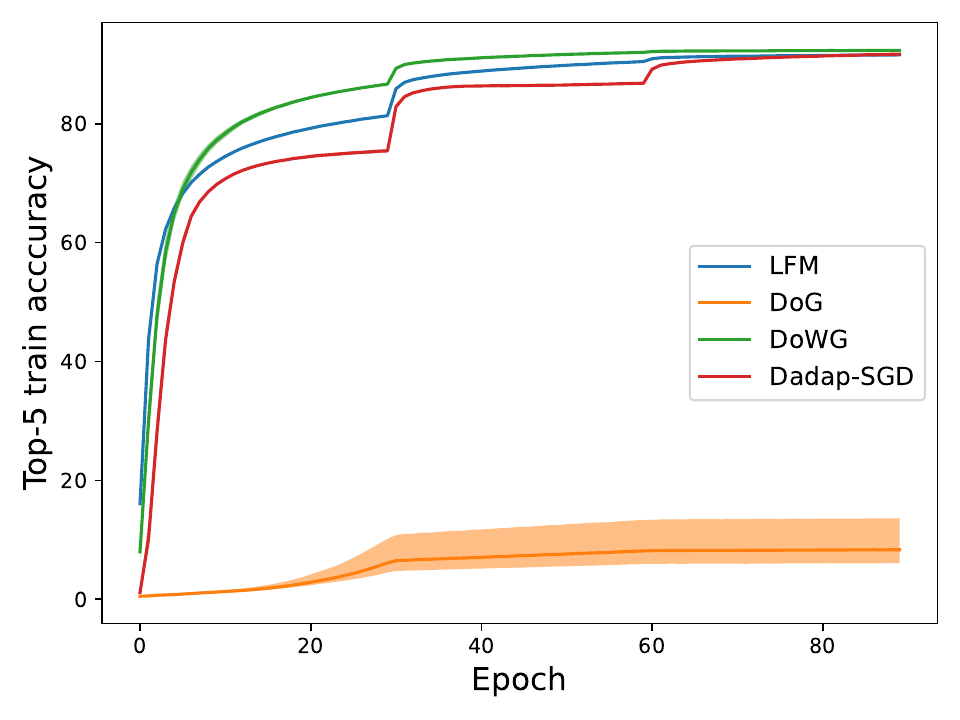}
			\label{Fig:imagenet_trainacc_top5}
		\end{minipage}%
	}%
	\subfigure[Train loss]{
		\begin{minipage}[t]{0.33\linewidth}
			\centering
			\includegraphics[width=\linewidth]{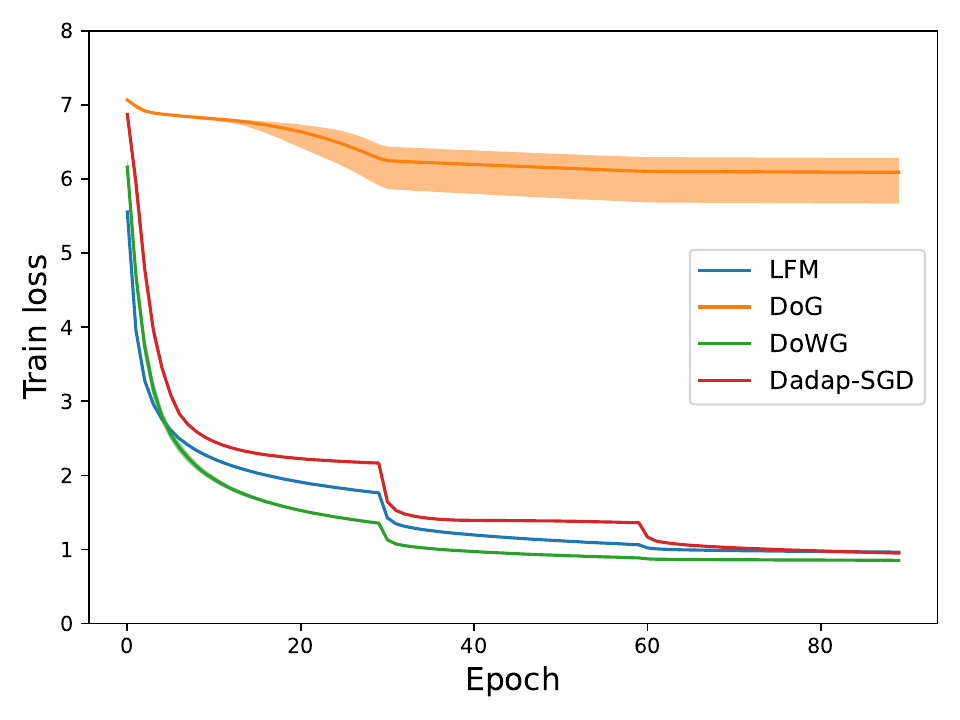}
			\label{Fig:imagenet_trainloss}
		\end{minipage}%
	}%
	\caption{Numerical results on applying LMF in Algorithm \ref{Alg:PFSGD}, DoG, DoWG, and D-adapted SGD for training ResNet-50 over Imagenet datasets.}
	\label{Fig_Test_Imagenet}
\end{figure}

To evaluate the numerical efficiency of LFM, we compare the performance of our proposed method in Algorithm \ref{Alg:PFSGD} with the state-of-the-art learning-rate-free methods, including DoG \cite{ivgi2023dog}, DoWG \cite{khaled2023dowg}, and D-adapted SGD  \cite{defazio2023learning} (abbreviated as Dadap-SGD). For all methods under consideration, we standardized the momentum parameter at $0.9$ and retained default settings for other parameters. The models are trained over 200 epochs with a batch size of $128$, and each test instance is executed five times with different random seeds. In each test instance, all the compared methods employ the same random seed, and all the initial points are generated by the default initialization function in PyTorch. Furthermore, we extended our evaluation to compare the performance of these methods across a range of neural network architectures, including ResNet50, ResNet18, VGG-Net, and MobileNet. This additional analysis provides a broader perspective on the adaptability and performance of each method within diverse computational environments.

Figures \ref{Fig_Test_Resnet50} - \ref{Fig_Test_Imagenet} exhibit the curves of the test accuracy, training loss, and test loss of all the compared methods in our experiments. The results depicted in Figure \ref{Fig_Test_Resnet50} indicate that LFM outperforms both DoG and Dadap-SGD, while slightly surpassing DoWG in terms of test error and accuracy when training ResNet50 on CIFAR datasets. Moreover, in Figure \ref{Fig_Test_VGG} and \ref{Fig_Test_Mobilenet}, DoWG exhibits unstable performance and may fail in some test instances (e.g.,  training VGG-Net on  CIFAR-100  in Figure \ref{Fig:vgg_1_testacc}-\ref{Fig:vgg_1_trainloss}). In contrast, the performance of LFM is stable on all the 
test instances. Furthermore, we present the results of training ResNet50 on the ImageNet dataset \cite{deng2009imagenet}, as shown in Figure \ref{Fig_Test_Imagenet}, LFM outperforms DoG and DoWG while exhibiting slightly better performance than Dadap-SGD in the aspect of test accuracy and test loss. Moreover, in terms of training loss and training accuracy, LFM is comparable to DoWG and Dadap-SGD,  and significantly outperforms DoG. These empirical results underscore the high efficiency and robustness of our proposed learning-rate-free method LFM in Algorithm \ref{Alg:PFSGD}.

\section{Conclusion}

In this paper, we first propose a general framework for developing learning-rate-free momentum SGD, where the learning rates are adaptively computed from the historical subgradients and iterates. Under reshuffling schemes, we establish the convergence properties for our proposed framework \eqref{Eq_Framework}, in the sense that any cluster point of sequence $\{\xk\}$ lies in the subset $\{x \in \Rn: 0 \in \D_f(x)\}$. Furthermore, under mild conditions,  we prove the global stability for \eqref{Eq_Framework_determin}, which is a deterministic version of \eqref{Eq_Framework}. 

We show that our proposed framework \eqref{Eq_Framework} provides convergence guarantees for DoG \cite{ivgi2023dog} and DoWG \cite{khaled2023dowg} in training nonsmooth neural networks. Moreover, based on our proposed framework \eqref{Eq_Framework}, we introduce a novel subgradient method named LFM in Algorithm \ref{Alg:PFSGD}, which is the first momentum-accelerated learning-rate-free SGD with guaranteed convergence in training nonsmooth neural networks. Preliminary numerical experiments illustrate that our proposed LFM outperforms both DoG and Dadap-SGD in terms of efficiency, while exhibiting enhanced robustness and comparable test accuracy as DoWG. These preliminary numerical experiments further illustrate the promising potential of our proposed framework \eqref{Eq_Framework}.


%
%

\bibliographystyle{spmpsci}      
\bibliography{ref.bib}   

\begin{thebibliography}{10}
\providecommand{\url}[1]{{#1}}
\providecommand{\urlprefix}{URL }
\expandafter\ifx\csname urlstyle\endcsname\relax
  \providecommand{\doi}[1]{DOI~\discretionary{}{}{}#1}\else
  \providecommand{\doi}{DOI~\discretionary{}{}{}\begingroup
  \urlstyle{rm}\Url}\fi

\bibitem{aubin2012differential}
Aubin, J.P., Cellina, A.: Differential inclusions: set-valued maps and
  viability theory, vol. 264.
\newblock Springer Science \& Business Media (2012)

\bibitem{benaim2006dynamics}
Bena{\"\i}m, M.: Dynamics of stochastic approximation algorithms.
\newblock In: Seminaire de probabilites XXXIII, pp. 1--68. Springer (2006)

\bibitem{benaim2005stochastic}
Bena{\"\i}m, M., Hofbauer, J., Sorin, S.: Stochastic approximations and
  differential inclusions.
\newblock SIAM Journal on Control and Optimization \textbf{44}(1), 328--348
  (2005)

\bibitem{bhaskara2020online}
Bhaskara, A., Cutkosky, A., Kumar, R., Purohit, M.: Online learning with
  imperfect hints.
\newblock In: International Conference on Machine Learning, pp. 822--831. PMLR
  (2020)

\bibitem{bianchi2022convergence}
Bianchi, P., Hachem, W., Schechtman, S.: Convergence of constant step
  stochastic gradient descent for non-smooth non-convex functions.
\newblock Set-Valued and Variational Analysis pp. 1--31 (2022)

\bibitem{bierstone1988semianalytic}
Bierstone, E., Milman, P.D.: Semianalytic and subanalytic sets.
\newblock Publications Math{\'e}matiques de l'IH{\'E}S \textbf{67}, 5--42
  (1988)

\bibitem{bolte2007clarke}
Bolte, J., Daniilidis, A., Lewis, A., Shiota, M.: Clarke subgradients of
  stratifiable functions.
\newblock SIAM Journal on Optimization \textbf{18}(2), 556--572 (2007)

\bibitem{bolte2021nonsmooth}
Bolte, J., Le, T., Pauwels, E., Silveti-Falls, T.: Nonsmooth implicit
  differentiation for machine-learning and optimization.
\newblock Advances in Neural Information Processing Systems \textbf{34} (2021)

\bibitem{bolte2020mathematical}
Bolte, J., Pauwels, E.: A mathematical model for automatic differentiation in
  machine learning.
\newblock Advances in Neural Information Processing Systems \textbf{33},
  10809--10819 (2020)

\bibitem{bolte2021conservative}
Bolte, J., Pauwels, E.: Conservative set valued fields, automatic
  differentiation, stochastic gradient methods and deep learning.
\newblock Mathematical Programming \textbf{188}(1), 19--51 (2021)

\bibitem{borkar2009stochastic}
Borkar, V.S.: Stochastic approximation: a dynamical systems viewpoint, vol.~48.
\newblock Springer (2009)

\bibitem{carmon2022making}
Carmon, Y., Hinder, O.: Making {SGD} parameter-free.
\newblock In: Conference on Learning Theory, pp. 2360--2389. PMLR (2022)

\bibitem{castera2021inertial}
Castera, C., Bolte, J., F{\'e}votte, C., Pauwels, E.: An inertial {Newton}
  algorithm for deep learning.
\newblock The Journal of Machine Learning Research \textbf{22}(1), 5977--6007
  (2021)

\bibitem{chen2022better}
Chen, K., Langford, J., Orabona, F.: Better parameter-free stochastic
  optimization with {ODE} updates for coin-betting.
\newblock In: Proceedings of the AAAI Conference on Artificial Intelligence,
  vol.~36, pp. 6239--6247 (2022)

\bibitem{chen2023symbolic}
Chen, X., Liang, C., Huang, D., Real, E., Wang, K., Liu, Y., Pham, H., Dong,
  X., Luong, T., Hsieh, C.J., et~al.: Symbolic discovery of optimization
  algorithms.
\newblock arXiv preprint arXiv:2302.06675  (2023)

\bibitem{clarke1990optimization}
Clarke, F.H.: Optimization and nonsmooth analysis, vol.~5.
\newblock SIAM (1990)

\bibitem{cutkosky2018black}
Cutkosky, A., Orabona, F.: Black-box reductions for parameter-free online
  learning in banach spaces.
\newblock In: Conference On Learning Theory, pp. 1493--1529. PMLR (2018)

\bibitem{davis2020stochastic}
Davis, D., Drusvyatskiy, D., Kakade, S., Lee, J.D.: Stochastic subgradient
  method converges on tame functions.
\newblock Foundations of Computational Mathematics \textbf{20}(1), 119--154
  (2020)

\bibitem{defazio2023learning}
Defazio, A., Mishchenko, K.: Learning-rate-free learning by {D}-adaptation.
\newblock In: International Conference on Machine Learning, pp. 7449--7479.
  PMLR (2023)

\bibitem{deng2009imagenet}
Deng, J., Dong, W., Socher, R., Li, L.J., Li, K., Fei-Fei, L.: Imagenet: A
  large-scale hierarchical image database.
\newblock In: 2009 IEEE conference on computer vision and pattern recognition,
  pp. 248--255. Ieee (2009)

\bibitem{duchi2011adaptive}
Duchi, J., Hazan, E., Singer, Y.: Adaptive subgradient methods for online
  learning and stochastic optimization.
\newblock Journal of machine learning research \textbf{12}(7) (2011)

\bibitem{fu2021end}
Fu, Q., Song, L., Du, W., Zhang, Y.: End-to-end {AMR} coreference resolution.
\newblock In: Proceedings of the 59th Annual Meeting of the Association for
  Computational Linguistics and the 11th International Joint Conference on
  Natural Language Processing (Volume 1: Long Papers), pp. 4204--4214.
  Association for Computational Linguistics, Online (2021)

\bibitem{he2016deep}
He, K., Zhang, X., Ren, S., Sun, J.: Deep residual learning for image
  recognition.
\newblock In: Proceedings of the IEEE conference on computer vision and pattern
  recognition, pp. 770--778 (2016)

\bibitem{hinton2012neural}
Hinton, G., Srivastava, N., Swersky, K.: Neural networks for machine learning
  lecture 6a overview of mini-batch gradient descent.
\newblock Cited on \textbf{14}(8), 2 (2012)

\bibitem{hu2022constraint}
Hu, X., Xiao, N., Liu, X., Toh, K.C.: A constraint dissolving approach for
  nonsmooth optimization over the {S}tiefel manifold.
\newblock IMA Journal of Numerical Analysis p. drad098 (2023)

\bibitem{ivgi2023dog}
Ivgi, M., Hinder, O., Carmon, Y.: {DoG} is {SGD}’s best friend: A
  parameter-free dynamic step size schedule pp. 14465--14499 (2023)

\bibitem{jacobsen2022parameter}
Jacobsen, A., Cutkosky, A.: Parameter-free mirror descent.
\newblock In: Conference on Learning Theory, pp. 4160--4211. PMLR (2022)

\bibitem{jin2022roby}
Jin, H., Chen, J., Zheng, H., Wang, Z., Xiao, J., Yu, S., Ming, Z.: Roby:
  Evaluating the adversarial robustness of a deep model by its decision
  boundaries.
\newblock Information Sciences \textbf{587}, 97--122 (2022)

\bibitem{josz2023global}
Josz, C., Lai, L.: Global stability of first-order methods for coercive tame
  functions.
\newblock Mathematical Programming pp. 1--26 (2023)

\bibitem{josz2023lyapunov}
Josz, C., Lai, L.: Lyapunov stability of the subgradient method with constant
  step size.
\newblock Mathematical Programming pp. 1--10 (2023)

\bibitem{kempka2019adaptive}
Kempka, M., Kotlowski, W., Warmuth, M.K.: Adaptive scale-invariant online
  algorithms for learning linear models.
\newblock In: International conference on machine learning, pp. 3321--3330.
  PMLR (2019)

\bibitem{khaled2024tuning}
Khaled, A., Jin, C.: Tuning-free stochastic optimization.
\newblock arXiv preprint arXiv:2402.07793  (2024)

\bibitem{khaled2023dowg}
Khaled, A., Mishchenko, K., Jin, C.: {DoWG} unleashed: An efficient universal
  parameter-free gradient descent method.
\newblock Advances in Neural Information Processing Systems \textbf{36},
  6748--6769 (2023)

\bibitem{kingma2014adam}
Kingma, D.P., Ba, J.: Adam: A method for stochastic optimization.
\newblock In Proceedings of the 3rd International Conference for Learning
  Representations  (2015)

\bibitem{kreisler2024accelerated}
Kreisler, I., Ivgi, M., Hinder, O., Carmon, Y.: Accelerated parameter-free
  stochastic optimization.
\newblock arXiv preprint arXiv:2404.00666  (2024)

\bibitem{krizhevsky2009learning}
Krizhevsky, A., Hinton, G., et~al.: Learning multiple layers of features from
  tiny images  (2009)

\bibitem{le2023nonsmooth}
Le, T.: Nonsmooth nonconvex stochastic heavy ball.
\newblock Journal of Optimization Theory and Applications \textbf{201}(2),
  699--719 (2024)

\bibitem{luo2015achieving}
Luo, H., Schapire, R.E.: Achieving all with no parameters: Adanormalhedge.
\newblock In: Conference on Learning Theory, pp. 1286--1304. PMLR (2015)

\bibitem{mhammedi2020lipschitz}
Mhammedi, Z., Koolen, W.M.: Lipschitz and comparator-norm adaptivity in online
  learning.
\newblock In: Conference on Learning Theory, pp. 2858--2887. PMLR (2020)

\bibitem{mishchenko2023prodigy}
Mishchenko, K., Defazio, A.: Prodigy: An expeditiously adaptive parameter-free
  learner.
\newblock arXiv preprint arXiv:2306.06101  (2023)

\bibitem{orabona2014simultaneous}
Orabona, F.: Simultaneous model selection and optimization through
  parameter-free stochastic learning.
\newblock Advances in Neural Information Processing Systems \textbf{27} (2014)

\bibitem{orabona2016coin}
Orabona, F., P{\'a}l, D.: Coin betting and parameter-free online learning.
\newblock Advances in Neural Information Processing Systems \textbf{29} (2016)

\bibitem{orabona2017training}
Orabona, F., Tommasi, T.: Training deep networks without learning rates through
  coin betting.
\newblock Advances in Neural Information Processing Systems \textbf{30} (2017)

\bibitem{pauwels2021incremental}
Pauwels, E.: Incremental without replacement sampling in nonconvex
  optimization.
\newblock Journal of Optimization Theory and Applications \textbf{190}(1),
  274--299 (2021)

\bibitem{pauwels2023conservative}
Pauwels, E.: Conservative parametric optimality and the ridge method for tame
  min-max problems.
\newblock Set-Valued and Variational Analysis \textbf{31}(3), 1--24 (2023)

\bibitem{polyak1964some}
Polyak, B.T.: Some methods of speeding up the convergence of iteration methods.
\newblock Ussr computational mathematics and mathematical physics
  \textbf{4}(5), 1--17 (1964)

\bibitem{ruszczynski2020convergence}
Ruszczy{\'n}ski, A.: Convergence of a stochastic subgradient method with
  averaging for nonsmooth nonconvex constrained optimization.
\newblock Optimization Letters \textbf{14}(7), 1615--1625 (2020)

\bibitem{streeter2012no}
Streeter, M., McMahan, H.B.: No-regret algorithms for unconstrained online
  convex optimization.
\newblock arXiv preprint arXiv:1211.2260  (2012)

\bibitem{wang2023chromosome}
Wang, J., Zhou, C., Chen, S., Hu, J., Wu, M., Jiang, X., Xu, C., Qian, D.:
  Chromosome detection in metaphase cell images using morphological priors.
\newblock IEEE Journal of Biomedical and Health Informatics  (2023)

\bibitem{ward2020adagrad}
Ward, R., Wu, X., Bottou, L.: Adagrad stepsizes: Sharp convergence over
  nonconvex landscapes.
\newblock The Journal of Machine Learning Research \textbf{21}(1), 9047--9076
  (2020)

\bibitem{wei2021end}
Wei, L., Zhao, S., Bourahla, O.F., Li, X., Wu, F., Zhuang, Y., Han, J., Xu, M.:
  End-to-end video saliency detection via a deep contextual spatiotemporal
  network.
\newblock IEEE Transactions on Neural Networks and Learning Systems
  \textbf{32}(4), 1691--1702 (2021).
\newblock \doi{10.1109/TNNLS.2020.2986823}

\bibitem{wilkie1996model}
Wilkie, A.J.: Model completeness results for expansions of the ordered field of
  real numbers by restricted {P}faffian functions and the exponential function.
\newblock Journal of the American Mathematical Society \textbf{9}(4),
  1051--1094 (1996)

\bibitem{xiao2024adam}
Xiao, N., Hu, X., Liu, X., Toh, K.C.: Adam-family methods for nonsmooth
  optimization with convergence guarantees.
\newblock Journal of Machine Learning Research \textbf{25}(48), 1--53 (2024)

\bibitem{xiao2023convergence}
Xiao, N., Hu, X., Toh, K.C.: Convergence guarantees for stochastic subgradient
  methods in nonsmooth nonconvex optimization.
\newblock arXiv preprint arXiv:2307.10053  (2023)

\bibitem{zhang2022pde}
Zhang, Z., Cutkosky, A., Paschalidis, I.: {PDE-based} optimal strategy for
  unconstrained online learning.
\newblock In: International Conference on Machine Learning, pp. 26085--26115.
  PMLR (2022)

\bibitem{zhou2022unsupervised}
Zhou, B., Chen, L., Zhao, S., Li, S., Zheng, Z., Pan, G.: Unsupervised domain
  adaptation for crime risk prediction across cities.
\newblock IEEE Transactions on Computational Social Systems  (2022)

\bibitem{zhou2023tempo}
Zhou, B., Zhou, H., Zhang, X., Xu, X., Chai, Y., Zheng, Z., Kot, A.C., Zhou,
  Z.: Tempo: A transformer-based mutation prediction framework for sars-cov-2
  evolution.
\newblock Computers in Biology and Medicine \textbf{152}, 106264 (2023)

\end{thebibliography}


\end{document}